\documentclass{article}
\usepackage{amsthm,amsmath,amsfonts}
\usepackage{verbatim,xcolor} 
\usepackage{graphicx}
\usepackage[lmargin=.7in,rmargin=.7in,tmargin=1in,bmargin=1in]{geometry} 
\usepackage{authblk}
\usepackage[normalem]{ulem}
\usepackage{stmaryrd}
\usepackage{marginnote}
\usepackage{mathrsfs}
\usepackage{amscd}
\usepackage{verbatim}
\usepackage{subcaption}

\DeclareMathOperator{\BigO}{{O}}
\newcommand{\RR}{{\mathbb{R}}}
\newcommand{\BB}{{\mathbb{B}}}
\newcommand{\tu}{{\tilde{u}}}

\newcommand{\Mrad}{{M_{\textup{rad}}}}

\newcommand{\Vrad}{{V_{\textup{rad}}}}

\newtheorem{thm}{Theorem}
\newtheorem{lem}{Lemma}
\newtheorem{proposition}{Proposition}
\newtheorem{corollary}{Corollary}

\newtheorem{remark}{Remark}

\newenvironment{customthm}[1]
  {\innercustomthm}
  {\endinnercustomthm}

\theoremstyle{Remark}

\makeatletter
\newcommand{\subjclass}[2][1991]{%
  \let\@oldtitle\@title%
  \gdef\@title{\@oldtitle\footnotetext{#1 \emph{Mathematics subject classification.} #2}}%
}
\newcommand{\keywords}[1]{%
  \let\@@oldtitle\@title%
  \gdef\@title{\@@oldtitle\footnotetext{\emph{Key words and phrases.} #1.}}%
}
\makeatother

\begin{document}
\title{A Sharp Isoperimetric Inequality for the Second Eigenvalue of the Robin Plate}
\author[$\dagger$]{L. Mercredi Chasman}
\author[$\star$]{Jeffrey J. Langford}

\affil[$\dagger$]{University of Minnesota - Morris, 600 E. 4th Street, Morris, MN 56267, USA}
\affil[$\star$]{Bucknell University, 1 Dent Drive, Lewisburg, PA 17837, USA}

\date{\today}

\keywords{Bilaplacian, Robin boundary conditions, isoperimetric inequality, Bessel functions}
 
\subjclass[2020]{Primary 35P15; Secondary 35J40, 74K20, 33C10.}

\maketitle

\abstract{Among all $C^{\infty}$ bounded domains with equal volume, we show that the second eigenvalue of the Robin plate is uniquely maximized by an open ball, so long as the Robin parameter lies within a particular range of negative values. Our methodology combines recent techniques introduced by Freitas and Laugesen to study the second eigenvalue of the Robin membrane problem and techniques employed by Chasman to study the free plate problem. In particular, we choose eigenfunctions of the ball as trial functions in the Rayleigh quotient for a general domain; such eigenfunctions are comprised of ultraspherical Bessel and modified Bessel functions. Much of our work hinges on developing an understanding of delicate properties of these special functions, which may be of independent interest.}

\section{Introduction}
Our work is motivated by the Robin membrane problem, which recently has attracted considerable attention. To set the stage, let $\Omega\subset\RR^d$ be a $C^{\infty}$ bounded domain with $d \geq 2$. The Robin eigenvalue problem for the Laplacian is then
\begin{equation}\label{eqn:robinmembrane}
\begin{cases}
-\Delta u =\lambda u  &\text{in $\Omega$},\\
\alpha u+\frac{\partial u}{\partial n}=0 &\text{on $\partial\Omega$},
\end{cases}
\end{equation}
where $\alpha$ is a real parameter. The eigenvalues of problem \eqref{eqn:robinmembrane} are known to satisfy
\[
\lambda_1(\Omega;\alpha)< \lambda_2(\Omega;\alpha)\leq \lambda_3(\Omega;\alpha) \leq \cdots \to +\infty.
\]
The parameter $\alpha$ allows for interpolation among several membrane problems with different boundary conditions. By considering $\alpha=0$, one recovers the free membrane (Neumann) problem; sending $\alpha\to\infty$ recovers the fixed membrane (Dirichlet) problem; taking $\lambda=0$ and treating $-\alpha$ as an eigenvalue recovers the Steklov problem. Articles on eigenvalue estimates and related questions for the Robin problem \eqref{eqn:robinmembrane} fill many journal pages; we refer the interested reader to the wonderful survey (and references therein) by Bucur, Freitas, and Kennedy \cite[Ch.4]{Henrot}. As motivation for our work, we recall a specific result due to Freitas and Laugesen \cite{FreitasLaugesen1}:

\begin{customthm}{A}\label{Thm:A}
With $\Omega\subset\RR^d$ ($d\geq 2$) a $C^{\infty}$ bounded domain, let $\Omega^*\subset\RR^d$ denote an open ball with the same volume as $\Omega$. Then
\[
\lambda_2(\Omega;\alpha)\leq \lambda_2(\Omega^*;\alpha),\quad \alpha \in \left[-\frac{d+1}{d}R^{-1},0\right],
\]
where $R$ is the radius of $\Omega^*$. Equality holds if and only if $\Omega$ is a ball.
\end{customthm}

The main result of our paper is a plate-analogue of Theorem \ref{Thm:A}. The Robin plate eigenvalue problem with tension/compression $\tau$ and Robin parameter $\alpha \in\RR$ is
\begin{equation}\label{eqn:robinplate}
\begin{cases}
\Delta^2 u-\tau\Delta u=\Lambda u &\text{in $\Omega$},\\
Mu:=\frac{\partial^2u}{\partial n^2}=0  &\text{on $\partial\Omega$},\\
Vu:=\tau\frac{\partial u}{\partial n}-\frac{\partial \Delta u}{\partial n}-\mathrm{div}_{\partial\Omega}[P_{\partial\Omega}((D^2u)n)]+\alpha u=0 &\text{on $\partial\Omega$.}
\end{cases}
\end{equation}
Here $P_{\partial\Omega}$ denotes the projection onto the tangent space of ${\partial\Omega}$, and $\mathrm{div}_{\partial\Omega}$ is the surface divergence. We note that when $\alpha=0$, we recover the free plate problem; when $\Lambda=0$ and we view $-\alpha$ as an eigenvalue, we recover the biharmonic Steklov problem (see also Remark \ref{Rmk:GenRobRmk} for a more general Robin plate problem that has a connection to the clamped plate problem).

The spectrum for problem \eqref{eqn:robinplate} consists of real eigenvalues satisfying (see Proposition \ref{prop:SpecProps})
\begin{equation}\label{Ineqs:RobSpec}
\Lambda_1(\Omega;\tau,\alpha)\leq\Lambda_2(\Omega;\tau,\alpha)\leq \Lambda_3(\Omega;\tau,\alpha)\leq\cdots \to +\infty.
\end{equation}
The main result of this paper is an isoperimetric inequality for $\Lambda_2$:
\begin{thm}\label{thm:mainthm} Let $\Omega$ and $\Omega^*$ be as in Theorem \ref{Thm:A}. Then for all $\tau>0$, we have
\[
\Lambda_2(\Omega;\tau,\alpha)\leq \Lambda_2(\Omega^*;\tau,\alpha),\qquad \alpha \in \left[-\frac{\tau}{R},0\right],
\]
where $R$ is the radius of $\Omega^*$. Moreover, when $-\tau/R<\alpha<0$, equality holds precisely when $\Omega$ is a ball.
\end{thm}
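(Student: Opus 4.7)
The plan is to adapt the Freitas--Laugesen strategy for the second membrane eigenvalue to the fourth-order setting, borrowing machinery from Chasman's free plate analysis. Separation of variables shows that on the ball $\Omega^*$, the eigenvalue $\Lambda_2(\Omega^*;\tau,\alpha)$ is attained on a $d$-dimensional eigenspace spanned by functions of the form
\[
u^*_j(x) = \rho(|x|)\,\frac{x_j}{|x|}, \qquad j=1,\ldots,d,
\]
where the radial profile $\rho$ is an explicit combination of ultraspherical Bessel and modified Bessel functions determined by $\tau$, $\alpha$, and $R$. The trial functions I would feed into the Rayleigh quotient for $\Lambda_2(\Omega;\tau,\alpha)$ are
\[
\varphi_j(x) := G(|x|)\,\frac{x_j}{|x|}, \qquad j=1,\ldots,d,
\]
where $G\colon [0,\infty)\to\RR$ agrees with $\rho$ on $[0,R]$ and is continued beyond $R$ in a way that preserves the monotonicity properties needed below (for instance, by continuing the underlying radial ODE or by a $C^1$ linear extension). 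A Brouwer-type fixed-point/continuity argument on the $d$-parameter family of translations of $\Omega$ should produce a translate for which each $\varphi_j$ is $L^2$-orthogonal to the first eigenfunction of $\Omega$; the variational characterization of $\Lambda_2$ then yields an upper bound on $\Lambda_2(\Omega;\tau,\alpha)$ from each of the associated Rayleigh quotients.

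The second step is to add the $d$ resulting inequalities. Since $\sum_{j=1}^d \varphi_j(x)^2 = G(|x|)^2$, and since the combinations $\sum_j(\Delta\varphi_j)^2$, $\sum_j|\nabla\varphi_j|^2$, and the Hessian norm squared all collapse to purely radial functions of $|x|$, the summed inequality takes the shape
\[
\Lambda_2(\Omega;\tau,\alpha)\int_\Omega G(|x|)^2\,dx \le \int_\Omega F(|x|)\,dx + \alpha\int_{\partial\Omega} G(|x|)^2\,dS,
\]
with an explicit radial density $F$ built from $G$ and its derivatives. The goal then becomes to show that the ratio on the right, as a functional of $\Omega$ at fixed volume, is maximized when $\Omega=\Omega^*$, in which case it reduces exactly to $\Lambda_2(\Omega^*;\tau,\alpha)$.

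The hardest step, I expect, is this final rearrangement argument. Because $\alpha\le 0$, the boundary term $\alpha\int_{\partial\Omega}G^2\,dS$ has the opposite sign from the volume integrals, and its behavior under moving mass radially is controlled by the sign of the radial derivative of $G^2$. I expect the hypothesis $\alpha\ge -\tau/R$ to be exactly the threshold at which the boundary condition $V\rho=0$ on $\Omega^*$, coupled with the fourth-order radial ODE satisfied by $\rho$, forces $G$ and the radial quotient $F(r)/G(r)^2$ to have the correct monotonicities in $r$, so that symmetric-rearrangement for the volume term and the classical isoperimetric inequality for the surface term pull in the same direction rather than cancel. Making these monotonicities precise is where the \emph{delicate properties of special functions} flagged in the abstract must enter, and it is the main technical obstacle. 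Once these rearrangement estimates are shown to be strict for $-\tau/R<\alpha<0$, equality in the main inequality forces $\Omega=\Omega^*$ up to a translation, giving the equality case.
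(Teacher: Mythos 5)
Your overall skeleton --- trial functions $\rho(|x|)x_j/|x|$ built from the ball's second eigenfunction, a linear extension beyond $r=R$, a translation to achieve orthogonality to the ground state, and summation over $j$ to produce a radial inequality --- matches the paper's (Weinberger/Chasman/Freitas--Laugesen) strategy. But the final step as you describe it has a genuine gap. You propose to handle the boundary term $\alpha\int_{\partial\Omega}G^2\,dS$ with ``the classical isoperimetric inequality.'' Since $\alpha<0$, an upper bound on this term requires a \emph{lower} bound on $\int_{\partial\Omega}G^2\,dS$ in terms of the ball, and no such bound follows from $|\partial\Omega|\ge|\partial\Omega^*|$: the weight $G^2$ is increasing in $r$, so portions of $\partial\Omega$ lying inside the ball carry weight strictly less than $G(R)^2$, and the weighted surface integral over $\partial\Omega$ can a priori be smaller than over $\partial\Omega^*$. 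The mechanism that actually works (Proposition 1 of Freitas--Laugesen, invoked in the paper) is to write
\[
\int_{\partial\Omega}G^2\,dS\;\ge\;\int_{\partial\Omega}G^2\,\frac{x\cdot n}{|x|}\,dS\;=\;\int_{\Omega}\Bigl(2GG'+\tfrac{d-1}{r}G^2\Bigr)\,dx,
\]
so that the boundary term is absorbed into a single volume density $N[\rho]$. One then proves that $N[\rho]$ is \emph{partially monotonic} (its values inside $\BB$ exceed its values outside), while $\rho^2$ is nondecreasing; swapping $\Omega$ for $\BB$ then increases the numerator and decreases the denominator simultaneously. There is no separate rearrangement of a quotient $F/G^2$, and no isoperimetric inequality for the surface measure is used.

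Two further points you elide are genuinely load-bearing. First, the assertion that $\Lambda_2(\Omega^*;\tau,\alpha)$ is attained on the $\ell=1$ eigenspace is not automatic for negative $\alpha$: one must rule out radial second eigenfunctions and zero or negative second eigenvalues, which the paper does via a Wronskian analysis of $W_0$ and $W_1$ on $(0,p_{1,1}]$ together with continuity of the eigenvalues in $\alpha$; this is precisely where the restriction $\alpha\in(-\tau/R,0)$ enters (after rescaling to $R=1$ it becomes $\alpha\in(-\tau,0)$, the range on which $\Lambda_2(\BB;\tau,\alpha)>0$ with $a\in(0,p_{1,1})$). Second, the orthogonality/translation step only applies when the ground state of $\Omega$ has nonzero mean; the zero-mean case must be dispatched separately (the constant function is then admissible and gives $\Lambda_2(\Omega;\tau,\alpha)<0<\Lambda_2(\Omega^*;\tau,\alpha)$ directly). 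The remaining content --- verifying the sign and monotonicity properties of $\rho$, $\rho-r\rho'$, $\alpha\rho+\tau\rho'$, and the positivity of the $j_1$-coefficient in the expansion of $\tfrac{6}{r^2}(\rho-r\rho')+3\rho''+(\tau+\alpha)\rho$ --- is where the bulk of the paper's technical work lies, and your proposal correctly flags but does not supply it.
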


In \cite{FreitasLaugesen1}, Freitas and Laugesen show that Theorem \ref{Thm:A} interpolates between isoperimetric inequalities for the free membrane and Steklov eigenvalue problems. Likewise, Theorem \ref{thm:mainthm} gives isoperimetric inequalities for  the biharmonic free plate and Steklov eigenvalue problems; to state our corollaries, we require some additional notation. The free plate eigenvalue problem is obtained by taking $\alpha=0$ in problem \eqref{eqn:robinplate}:
\begin{equation}\label{eqn:FreePlate}
\begin{cases}
\Delta^2 u-\tau\Delta u=\omega u &\text{in $\Omega$},\\
\frac{\partial^2u}{\partial n^2}=0  &\text{on $\partial\Omega$},\\
\tau\frac{\partial u}{\partial n}-\frac{\partial \Delta u}{\partial n}-\mathrm{div}_{\partial\Omega}[P_{\partial\Omega}((D^2u)n)]=0 &\text{on $\partial\Omega$.}
\end{cases}
\end{equation}
The spectrum of this problem (see \cite{chasmanineq}) satisfies
\[
0= \omega_1(\Omega)\leq \omega_2(\Omega) \leq \omega_3(\Omega)\leq \cdots \to +\infty,
\]
and it is well known that the second eigenvalue satisfies $\omega_2(\Omega)>0$ so long as $\tau>0$. Similarly, the biharmonic Steklov problem is obtained from \eqref{eqn:robinplate} by setting $\Lambda=0$ and $\alpha=-\sigma$:
\begin{equation}\label{eqn:BiharmonicSteklov}
\begin{cases}
\Delta^2 u-\tau\Delta u=0 &\text{in $\Omega$},\\
\frac{\partial^2u}{\partial n^2}=0  &\text{on $\partial\Omega$},\\
\tau\frac{\partial u}{\partial n}-\frac{\partial \Delta u}{\partial n}-\mathrm{div}_{\partial\Omega}[P_{\partial\Omega}((D^2u)n)]=\sigma u &\text{on $\partial\Omega$.}
\end{cases}
\end{equation}
The spectrum of this problem is also well understood (see \cite{BP}):
\[
0=\sigma_1(\Omega)<\sigma_2(\Omega)\leq \sigma_3(\Omega) \leq \cdots \to +\infty.
\]
The main corollary of our paper recovers classical isoperimetric inequalities for the lowest nonzero eigenvalues of the free plate and Steklov eigenvalue problems (see \cite{BP, chasmanineq}):
\begin{corollary}\label{Cor:MainCor}
Let $\Omega$ and $\Omega^*$ be as in Theorem \ref{thm:mainthm}. If $\tau>0$, then the lowest nonzero eigenvalue of the free plate problem \eqref{eqn:FreePlate} satisfies
\[
\omega_2(\Omega)\leq\omega_2(\Omega^*),
\]
and the first nonzero Steklov eigenvalue of problem \eqref{eqn:BiharmonicSteklov} satisfies
\[
\sigma_2(\Omega)\leq\sigma_2(\Omega^*).
\]
\end{corollary}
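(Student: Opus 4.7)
The plan is to derive both isoperimetric inequalities directly from Theorem \ref{thm:mainthm} by specializing the Robin parameter $\alpha$, combined with monotonicity of the second Robin eigenvalue in $\alpha$.

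The free plate bound is immediate. At $\alpha=0$ the Robin problem \eqref{eqn:robinplate} reduces to the free plate problem \eqref{eqn:FreePlate}, so $\Lambda_n(\cdot;\tau,0)=\omega_n(\cdot)$. Since $\alpha=0$ lies in the admissible interval $[-\tau/R,0]$, Theorem \ref{thm:mainthm} immediately yields $\omega_2(\Omega)\leq\omega_2(\Omega^*)$.

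The Steklov bound takes two steps. First, I would verify that $\sigma_2(\Omega^*)=\tau/R$: the linear coordinate functions $u=x_i$ are harmonic (so the interior equation of \eqref{eqn:BiharmonicSteklov} holds), have vanishing second derivatives (so $\partial^2 u/\partial n^2=0$ and the surface divergence term drops out), and satisfy $\tau\,\partial u/\partial n = \tau n_i = (\tau/R)x_i$ on $\partial\Omega^*$, giving the third boundary condition with $\sigma=\tau/R$. That these linear eigenfunctions realize $\sigma_2$ on the ball (rather than some strictly smaller eigenvalue) is established in \cite{BP}. Second, applying Theorem \ref{thm:mainthm} at the left endpoint $\alpha=-\tau/R=-\sigma_2(\Omega^*)$ yields
\[
\Lambda_2(\Omega;\tau,-\tau/R)\leq \Lambda_2(\Omega^*;\tau,-\tau/R)=0,
\]
the last equality holding because $\Lambda=0$ is a Robin eigenvalue on $\Omega^*$ precisely when $-\alpha$ is a Steklov eigenvalue.

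To conclude, I would invoke monotonicity of $\Lambda_2$ in $\alpha$. The Rayleigh quotient associated with \eqref{eqn:robinplate} depends on $\alpha$ only through the boundary term $\alpha\int_{\partial\Omega}u^2\,dS$, so a standard min-max argument shows that $\alpha\mapsto\Lambda_2(\Omega;\tau,\alpha)$ is nondecreasing and vanishes precisely at $\alpha=-\sigma_2(\Omega)$. The inequality $\Lambda_2(\Omega;\tau,-\tau/R)\leq 0$ therefore forces $-\tau/R\leq -\sigma_2(\Omega)$, i.e., $\sigma_2(\Omega)\leq \tau/R = \sigma_2(\Omega^*)$. The only real subtleties are the monotonicity claim and the identification $\sigma_2(\Omega^*)=\tau/R$; both should follow cleanly from the variational framework for \eqref{eqn:robinplate} and the explicit spectral description of the biharmonic Steklov problem on the ball in \cite{BP}.
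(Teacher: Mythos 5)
Your proposal is correct and follows essentially the same route as the paper: taking $\alpha=0$ gives the free plate inequality directly, and the Steklov inequality comes from evaluating Theorem \ref{thm:mainthm} at the left endpoint together with the correspondence between zero Robin eigenvalues and Steklov eigenvalues, plus the monotonicity/continuity of $\Lambda_2(\Omega;\tau,\alpha)$ in $\alpha$. The only cosmetic differences are that you identify $\sigma_2(\Omega^*)=\tau/R$ via explicit coordinate eigenfunctions and \cite{BP}, whereas the paper obtains $\Lambda_2(\mathbb{B};\tau,-\tau)=0$ internally from Remark \ref{rmk:ezero}, and the paper pins down $\sigma_2(\Omega)$ as $-\alpha'$ with $\alpha'=\sup\{\alpha:\Lambda_2(\Omega;\tau,\alpha)=0\}$ rather than asserting outright that $\Lambda_2$ vanishes precisely at $\alpha=-\sigma_2(\Omega)$.
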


To place our work in the existing literature, we focus our attention on isoperimetric inequalities for plate problems and related membrane results. Generally speaking, isoperimetric inequalities for plates are harder to establish than their membrane counterparts (and appear much later in the literature). For example, in the late 1800's Lord Rayleigh \cite{RToS} conjectured that the fundamental frequencies of a fixed membrane and a clamped plate are minimized by disks (and more generally by balls in higher dimensions). The conjecture for membranes was proven (independently) in the 1920's by Faber and Krahn \cite{Faber, Krahn}. By contrast, significant progress towards Rayleigh's conjecture for the clamped plate didn't begin until the 1980's with the work of Talenti \cite{T81}. Building on Talenti's work, Nadirashvili \cite{N95} later proved Rayleigh's conjecture for the clamped plate in dimension $d=2$, and Ashbaugh and Benguria \cite{AB95}, also building on Talenti's work, established the conjecture independently in dimensions $d=2$ and $d=3$. The conjecture still remains open for higher dimensions, although work of Ashbaugh and Laugesen \cite{AL96} indicates that the conjecture is ``asymptotically true in high dimensions.''

Likewise, in the 1950's Szeg\H{o} \cite{Szego} and Weinberger \cite{Weinberger} proved that the second frequency (i.e. the first nonzero frequency) of a free membrane is maximized by a ball, and it wasn't until fairly recently that the analogous plate result was established by Chasman \cite{chasmanineq}; see also \cite{BCP, chasmanball, ChasmanPR}. For the Steklov Laplacian, the first isoperimetric inequality dates to Weinstock \cite{Weinstock} in dimension $d=2$ (using a perimeter constraint), with later work in all dimensions (under volume constraint) by Brock \cite{Brock}. The analogous isoperimetric inequality for the biharmonic operator was established by Buoso and Provenzano \cite{BP};  see also \cite{BFG, BG, BCP}.

The first isoperimetric inequality for the Robin Laplacian came in the 1980's with the work of Bossel \cite{Bossel}. For $\alpha>0$, Bossel showed that in dimension $d=2$, the first Robin eigenvalue is minimized by a disk, and Daners \cite{Daners} later extended this result to all dimensions. See also the related work \cite{BucurGiacomini1}. For $\alpha<0$, Bareket \cite{Bareket} conjectured  that balls maximize the first Robin eigenvalue. For such $\alpha$, Ferone, Nitsch, and Trombetti \cite{FNT} showed that balls are local maximizers of the first eigenvalue. In  \cite{FK}, Freitas and Krej\v{c}i\v{r}\'{\i}k disproved Bareket's conjecture in general, but showed that disks maximize the first eigenvalue when the dimension $d=2$ and $\alpha$ is sufficiently close to zero. In \cite{AFK}, Antunes, Freitas, and Krej\v{c}i\v{r}\'{\i}k proved Baraket's conjecture when $d=2$ under perimeter (rather than volume) constraint. Also under perimeter constraint,  Bucur, Ferone, Nitsch, and Trombetti \cite{BFNT}  proved Bareket's conjecture in all dimensions among convex domains. For the second Robin eigenvalue, Laugesen and Freitas \cite{FreitasLaugesen1} proved that balls maximize the second eigenvalue for a range of negative $\alpha$ values (see Theorem \ref{Thm:A} above); see also \cite{FrietasLaugesen2}. For the Robin plate, however, no isoperimetric inequalities appear in the literature. Indeed, we are only aware of one other paper \cite{BK} that discusses Robin plates. In this extensive work, Buoso and Kennedy study a more general Robin problem than \eqref{eqn:robinplate} (i.e. one with more parameters), with an eye towards understanding the asymptotic behavior of eigenvalues as the parameters become large. The authors also establish variational formulae for the eigenvalues and show that balls are critical domains. Such criticality results, however, do not imply global upper bounds like those discussed in Theorem \ref{thm:mainthm}. Thus, our paper represents a new avenue of research within the study of plates.

The remainder of the paper is structured as follows. In Section 2, we establish basic properties of the spectrum of the Robin plate problem. In Section 3, we collect a number of facts about Bessel functions integral to our discussion of eigenvalues and eigenfunctions of the unit ball in Sections 4 and 5. In Section 6, we construct our trial functions and establish a center of mass result. In Section 7, we establish several technical inequalities needed to prove monotonicity of the Rayleigh quotient in Section 8. Also in Section 8, we deduce  isoperimetric inequalities for the free plate and biharmonic Steklov problems from Theorem \ref{thm:mainthm}.

\section{Coercivity, Natural Boundary Conditions, and the Spectrum of the Robin Plate Problem}
Our approach to the Robin plate problem is modeled after Chasman's approach to the free plate problem \cite{chasmanineq}. Thus, we begin with an appropriate Rayleigh quotient and bilinear form. We derive the boundary conditions by closely examining the weak eigenvalue equation and integrating by parts.

The Robin plate Rayleigh quotient is
\begin{equation}\label{eq:RQ}
Q[\Omega; \alpha,u]=Q[u]:=\frac{\int_\Omega \left( |D^2u|^2+\tau |Du|^2\right)\,dx+\alpha\int_{\partial\Omega}u^2\,dS}{\int_\Omega u^2\,dx}, \qquad u\in H^2(\Omega).
\end{equation}

For the time being, we let $\tau$ and $\alpha$ be general real parameters; later, we restrict the range of values that $\tau$ and $\alpha$ assume. The bilinear form associated to the Rayleigh quotient is
\[
a(u,\phi)=\int_\Omega \left(\sum_{i,j}u_{x_ix_j}\phi_{x_ix_j}+\tau Du\cdot D\phi\right)\,dx +\alpha \int_{\partial\Omega} u\phi \,dS,  \qquad u,\phi \in H^2(\Omega).
\]
We denote the bilinear form for the free plate as
\[
a_{\text{free},\tau}(u,\phi)=\int_\Omega \left(\sum_{i,j}u_{x_ix_j}\phi_{x_ix_j}+\tau Du\cdot D\phi\right)\,dx, \qquad u,\phi \in H^2(\Omega).
\]
This notation allows us to express the Robin plate form as the sum of the free plate form and a surface integral term.

Our immediate task is to show that the eigenvalues of the Robin plate problem \eqref{eqn:robinplate} behave as in \eqref{Ineqs:RobSpec}. We accomplish this task by showing that the associated form is both continuous and coercive.

\begin{proposition}\label{prop:SpecProps}
Let $\Omega \subset \mathbb{R}^d$ be a $C^{\infty}$ bounded domain. There exist positive constants $K_1, K_2$, and $K_3$ such that
\[
K_1\|u\|_{H^2(\Omega)}^2 \leq a(u,u)+K_2\|u\|_{L^2(\Omega)}^2\leq K_3\|u\|_{H^2(\Omega)}^2.
\]
Thus, the eigenvalues of the operator associated to $a(\cdot,\cdot)$ have finite multiplicity and satisfy
\begin{equation}\label{SpIneqs}
\Lambda_1(\Omega;\tau,\alpha)\leq\Lambda_2(\Omega;\tau,\alpha)\leq \Lambda_3(\Omega;\tau,\alpha)\leq\cdots \to +\infty.
\end{equation}
Weak eigenfunctions, moreover, form an orthonormal basis of $L^2(\Omega)$, are smooth on $\overline{\Omega}$, and satisfy the eigenvalue problem \eqref{eqn:robinplate} in the classical sense.
\end{proposition}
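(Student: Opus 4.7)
The strategy is to verify that $a(\cdot,\cdot)$ is continuous and, after a shift by a multiple of $(\cdot,\cdot)_{L^2}$, coercive on $H^2(\Omega)$. The two-sided estimate then places $a+K_2(\cdot,\cdot)_{L^2}$ in the Lax--Milgram framework, producing a self-adjoint operator whose resolvent is compact because $H^2(\Omega)\hookrightarrow L^2(\Omega)$ is compact by Rellich--Kondrachov. The spectral picture \eqref{SpIneqs} then follows from the spectral theorem for compact self-adjoint operators, smoothness of eigenfunctions is a consequence of elliptic regularity for the biharmonic problem with Robin-type boundary conditions, and the strong form of the equation and the natural boundary conditions are read off by integrating by parts in the weak identity.

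The upper bound is immediate: $\int_\Omega(|D^2u|^2+\tau|Du|^2)\,dx\le\max(1,|\tau|)\|u\|_{H^2}^2$, and the standard trace embedding $H^1(\Omega)\hookrightarrow L^2(\partial\Omega)$ gives $\|u\|_{L^2(\partial\Omega)}^2\le C_\Omega\|u\|_{H^2(\Omega)}^2$, which controls the boundary term by $|\alpha|C_\Omega\|u\|_{H^2}^2$. The lower bound is the main obstacle, because for $\alpha<0$ the boundary term is negative and could in principle swamp the positive Hessian contribution. I would first invoke the $H^2$-equivalence $\|u\|_{H^2}^2\sim\|D^2u\|_{L^2}^2+\|u\|_{L^2}^2$ available on a smooth bounded domain, reducing matters to bounding $\|D^2u\|_{L^2}^2+\|u\|_{L^2}^2$. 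The negative boundary contribution is then absorbed via the interpolation inequality
\[
\|u\|_{H^1(\Omega)}^2\le\varepsilon\|u\|_{H^2(\Omega)}^2+C_\varepsilon\|u\|_{L^2(\Omega)}^2
\]
composed with the trace theorem: choosing $\varepsilon$ so small that $|\alpha|C_\Omega\varepsilon<\tfrac12$, and then $K_2$ so large that $K_2\ge|\alpha|C_\Omega C_\varepsilon+1$, yields the required coercivity. An entirely analogous absorption argument handles the $\tau|Du|^2$ term for the general real $\tau$ stated in the proposition (the sign restriction $\tau>0$ is only needed later, for the eigenvalue comparison).

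Continuity and coercivity in place, standard Hilbert space theory produces the self-adjoint operator $A$ with $a(u,\phi)=(Au,\phi)_{L^2}$ on a dense domain, compact resolvent, discrete spectrum accumulating only at $+\infty$, and an orthonormal $L^2$-basis of eigenfunctions. For smoothness, one verifies that $\{\Delta^2-\tau\Delta,\,Mu,\,Vu\}$ is an elliptic boundary value problem in the Agmon--Douglis--Nirenberg sense; the $\alpha u$ term in $Vu$ is a lower-order perturbation of the complementing boundary condition already used for the free plate in \cite{chasmanineq}, so $\alpha$ does not affect the ellipticity check, and a standard bootstrap gives $u\in C^\infty(\overline\Omega)$ since $\partial\Omega$ is smooth. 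Finally, the classical form of the equation and the natural boundary conditions are extracted from $a(u,\phi)=\Lambda\int_\Omega u\phi\,dx$ by integrating by parts twice in the Hessian term and once in the gradient term, decomposing $D\phi=(\partial_n\phi)\,n+\nabla_{\partial\Omega}\phi$ on $\partial\Omega$, and applying the surface divergence theorem. Taking test functions supported in $\Omega$ yields $\Delta^2u-\tau\Delta u=\Lambda u$; letting $\phi|_{\partial\Omega}$ and $\partial_n\phi|_{\partial\Omega}$ vary independently over $\partial\Omega$ then isolates the conditions $Mu=0$ and $Vu=0$, with the surface integral $\alpha\int_{\partial\Omega}u\phi\,dS$ contributing the $+\alpha u$ term directly to $V$.
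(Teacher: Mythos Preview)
Your proposal is correct and follows the same overall architecture as the paper's proof: continuity plus shifted coercivity of $a(\cdot,\cdot)$ on $H^2(\Omega)$, compact embedding into $L^2(\Omega)$ yielding the discrete spectrum, elliptic regularity for smoothness, and integration by parts to recover the classical boundary conditions.

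The one substantive difference is in how coercivity is obtained. The paper bounds the boundary term using only the $H^1$ trace inequality $\|u\|_{L^2(\partial\Omega)}^2\le C\|u\|_{H^1(\Omega)}^2$, which produces
\[
a(u,u)+K_2\|u\|_{L^2}^2 \ge \|D^2u\|_{L^2}^2+(\tau-|\alpha|C)\|Du\|_{L^2}^2+(K_2-|\alpha|C)\|u\|_{L^2}^2 = a_{\text{free},\tau-|\alpha|C}(u,u)+(K_2-|\alpha|C)\|u\|_{L^2}^2,
\]
and then invokes the already-established coercivity of the \emph{free plate} form $a_{\text{free},\tau'}$ from \cite{chasmanthesis}. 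You instead use the equivalence $\|u\|_{H^2}^2\sim\|D^2u\|_{L^2}^2+\|u\|_{L^2}^2$ together with the Ehrling-type interpolation $\|u\|_{H^1}^2\le\varepsilon\|u\|_{H^2}^2+C_\varepsilon\|u\|_{L^2}^2$ to absorb both the boundary term and the $\tau|Du|^2$ term directly. Your route is more self-contained (it does not appeal to the free plate result), at the cost of invoking the $H^2$-equivalence, which is itself a nontrivial but standard fact on smooth domains. The paper's route makes the dependence on earlier plate literature explicit. For the derivation of the natural boundary conditions, the paper likewise shortcuts the integration by parts by quoting the identity already computed in \cite{chasmanineq}, whereas you sketch it from scratch; both are fine.
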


\begin{proof}
To show that $a(\cdot,\cdot)$ is coercive for all $\tau,\alpha \in \mathbb{R}$, we rely on the coercivity the free plate form. First note that for $u\in H^2(\Omega)$,
\[
a(u,u)\geq ||D^2u||_{L^2(\Omega)}^2+\tau ||Du\|_{L^2(\Omega)}^2 - |\alpha|  \cdot ||u||_{L^2(\partial \Omega)}^2.
\]
By the Trace Theorem, there exists $C>0$ such that
\begin{equation}\label{eq:tr}
\|Tu\|_{L^2(\partial\Omega)}^2\leq C\|u\|_{H^1(\Omega)}^2.
\end{equation}
Combining the two inequalities immediately above, we see
\begin{align}
a(u,u)+K_2\|u\|_{L^2(\Omega)}^2&\geq ||D^2 u||_{L^2(\Omega)}^2+(\tau-|\alpha|C)||Du||_{L^2(\Omega)}^2+(K_2-|\alpha|C)||u||_{L^2(\Omega)}^2\nonumber \\
&=a_{\textup{free},\tau-|\alpha|C}(u,u)+(K_2-|\alpha|C)||u||_{L^2(\Omega)}^2\label{Ineq:FP1}.
\end{align}
By the coercivity of the free plate form $a_{\textup{free},\tau-|\alpha|C}(\cdot,\cdot)$, there exist positive constants $C_1$ and $C_2$ (see Proposition 1 of \cite{chasmanthesis}) where
\[
C_1\|u\|^2_{H^2(\Omega)}\leq a_{\textup{free},\tau-|\alpha|C}(u,u)+C_2\|u\|^2_{L^2(\Omega)}.
\]
Choosing $K_2$ large enough so that $K_2-|\alpha|C\geq C_2$, inequality \eqref{Ineq:FP1} and the inequality immediately above together imply
\[
a(u,u)+K_2\|u\|_{L^2(\Omega)}^2 \geq C_1\|u\|^2_{H^2(\Omega)}.
\]
The coercivity of the form $a(\cdot,\cdot)$ follows by taking $K_1=C_1$. The continuity of $a(\cdot,\cdot)$ follows from the Trace inequality \eqref{eq:tr}:
\[
a(u,u)+K_2\|u\|_{L^2(\Omega)}^2 \leq K_3\|u\|^2_{H^2(\Omega)},
\]
where  $K_3=\max \{1,\tau+|\alpha|C,K_2+|\alpha|C\}$.

Since $H^2(\Omega)$ compactly embeds in $L^2(\Omega)$, well-known results (e.g. Corollary 7.D of \cite{Showalter}) imply that $a(\cdot,\cdot)$ has a set of complete real-valued weak eigenfunctions and eigenvalues of finite multiplicity as in \eqref{SpIneqs}. Claimed regularity of the eigenfunctions likewise follows from standard results, see p.668 of \cite{Nirenberg} and Propositions 4.3 of \cite{Taylor}, for example.

Having established that eigenfunctions belong to $C^{\infty}(\overline{\Omega})$, we next show that (weak) eigenfunctions associated to $a(\cdot,\cdot)$ solve \eqref{eqn:robinplate} in the classical sense. For an eigenfunction $u$, the weak eigenvalue equation yields
\begin{equation}\label{eq:weakeval}
a(u,\phi)=\Lambda\int_\Omega u\phi\,dx,\quad\text{for all $\phi\in C^\infty(\overline{\Omega})$}.
\end{equation}
This equation can be rewritten using the free plate's form $a_{\text{free},\tau}(\cdot,\cdot)$ as
\[
a_{\text{free},\tau}(u,\phi)+\alpha \int_{\partial \Omega}u\phi \,dS=\Lambda\int_\Omega u\phi\,dx.
\]
From the proof of Proposition 5 in \cite{chasmanineq}, we can write
\begin{align*}
a_{\text{free},\tau}(u,\phi)-\Lambda\int_\Omega u\phi\,dx&=\int_\Omega(\Delta^2u-\tau \Delta u-\Lambda u)\phi\,dx+\int_{\partial\Omega}\frac{\partial^2u}{\partial n^2}\frac{\partial\phi}{\partial n}\,dS\\
&\qquad+\int_{\partial\Omega}\left(\tau\frac{\partial u}{\partial n}-\frac{\partial \Delta u}{\partial n}-\mathrm{div}_{\partial\Omega}[P_{\partial\Omega}((D^2u)n)]\right)\phi\,dS.
\end{align*}
Thus, \eqref{eq:weakeval} can be rewritten as
\begin{align}\label{eq:finalzero}
0&=a(u,\phi)-\Lambda\int_\Omega u\phi\,dx \nonumber \\
&=\int_\Omega(\Delta^2u-\tau \Delta u-\Lambda u)\phi\,dx +\int_{\partial\Omega}\frac{\partial^2u}{\partial n^2} \frac{\partial \phi}{\partial n}\,dS \nonumber \\
&\qquad+\int_{\partial\Omega}\left(\tau\frac{\partial u}{\partial n}-\frac{\partial \Delta u}{\partial n}-\mathrm{div}_{\partial\Omega}[P_{\partial\Omega}((D^2u)n)]+\alpha u\right)\phi\,dS.
\end{align}
Taking $\phi \in C_c^{\infty}(\Omega)$, \eqref{eq:finalzero} gives that $\Delta^2u-\tau \Delta u-\Lambda u=0$ pointwise in $\Omega$. On the other hand, any smooth function $\phi \in C^{\infty}(\partial \Omega)$ admits an extension to $C^{\infty}(\overline{\Omega})$ satisfying $\frac{\partial \phi}{\partial n}=0$ along $\partial \Omega$. Thus, \eqref{eq:finalzero} implies that $\tau\frac{\partial u}{\partial n}-\frac{\partial \Delta u}{\partial n}-\mathrm{div}_{\partial\Omega}[P_{\partial\Omega}((D^2u)n)]+\alpha u=0$ and $\frac{\partial^2u}{\partial n^2}=0$ pointwise on $\partial \Omega$.
\end{proof}

The final tool of this section is a result which says that the eigenvalues of problem \eqref{eqn:robinplate} change continuously as we vary $\alpha$. This result, and its proof, are inspired by Proposition 16 of \cite{GirourdLaugesen}.

\begin{proposition}\label{Prop:Cont}
Let $\Omega \subset \mathbb{R}^d$ denote a bounded $C^{\infty}$ domain. Then for $\tau>0$, the eigenvalues $\Lambda_k(\Omega; \tau, \alpha)$ of problem \eqref{eqn:robinplate}  are continuous with respect to the Robin parameter $\alpha$.
\end{proposition}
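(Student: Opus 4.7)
The plan is to use the min-max characterization of eigenvalues together with the observation that only the boundary integral term in the Rayleigh quotient \eqref{eq:RQ} depends on $\alpha$. Specifically, for any $u \in H^2(\Omega) \setminus \{0\}$,
\[
Q[\Omega;\alpha,u] = Q[\Omega;\alpha_0,u] + (\alpha-\alpha_0)\,\frac{\int_{\partial\Omega} u^2\,dS}{\int_\Omega u^2\,dx}.
\]
This identity, combined with the trace inequality \eqref{eq:tr} and the coercivity established in Proposition \ref{prop:SpecProps}, should yield local Lipschitz continuity (hence continuity) in $\alpha$.

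For the upper bound, fix $\alpha_0 \in \RR$ and let $V_k = \mathrm{span}\{\varphi_1,\ldots,\varphi_k\}$, where $\varphi_i$ are $L^2$-orthonormal eigenfunctions corresponding to $\Lambda_1(\Omega;\tau,\alpha_0),\ldots,\Lambda_k(\Omega;\tau,\alpha_0)$. Using $V_k$ as a trial subspace in the min-max formula,
\[
\Lambda_k(\Omega;\tau,\alpha) \leq \max_{u \in V_k,\ \|u\|_{L^2}=1} Q[\Omega;\alpha,u] \leq \Lambda_k(\Omega;\tau,\alpha_0) + |\alpha-\alpha_0|\, M,
\]
where $M := \sup\{\int_{\partial\Omega} u^2\,dS : u \in V_k,\ \|u\|_{L^2(\Omega)}=1\}$ is finite because $V_k$ is finite-dimensional and the trace is continuous.

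For the matching lower bound, I reverse the roles and let $W_k^{\alpha} = \mathrm{span}\{\varphi_1^{\alpha},\ldots,\varphi_k^{\alpha}\}$ be the span of the first $k$ eigenfunctions at $\alpha$. The analogous estimate gives
\[
\Lambda_k(\Omega;\tau,\alpha_0) \leq \Lambda_k(\Omega;\tau,\alpha) + |\alpha-\alpha_0|\, M'(\alpha),
\]
with $M'(\alpha) := \sup\{\int_{\partial\Omega} u^2\,dS : u \in W_k^{\alpha},\ \|u\|_{L^2(\Omega)}=1\}$. The crux is to show $M'(\alpha)$ stays bounded as $\alpha \to \alpha_0$. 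For any unit-$L^2$ element $u \in W_k^{\alpha}$, the Rayleigh-Ritz principle gives $a(u,u) \leq \Lambda_k(\Omega;\tau,\alpha)$, and the upper bound above shows the right side is uniformly bounded for $\alpha$ in a compact neighborhood of $\alpha_0$. Applying Proposition \ref{prop:SpecProps} with constants $K_1,K_2$ chosen uniformly for $\alpha$ in that neighborhood (inspection of the proof shows the constants depend only on an upper bound for $|\alpha|$), we obtain a uniform $H^2$ bound on such $u$, and then the trace inequality \eqref{eq:tr} yields a uniform bound on $\int_{\partial\Omega} u^2\,dS$.

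Combining the two inequalities gives $|\Lambda_k(\Omega;\tau,\alpha) - \Lambda_k(\Omega;\tau,\alpha_0)| \leq |\alpha-\alpha_0| \cdot \max(M,\sup M'(\alpha))$, proving local Lipschitz continuity in $\alpha$ and, a fortiori, continuity. The main obstacle is the uniformity in the lower bound, which is resolved by tracking how the coercivity constants depend on $\alpha$ and leveraging the already-proven upper bound to control the Rayleigh quotient on the shifting eigenspaces $W_k^{\alpha}$.
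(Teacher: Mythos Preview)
Your argument is correct and in fact yields a slightly stronger conclusion (local Lipschitz continuity) than the paper's, via a more elementary route. The paper establishes the upper bound $\limsup_{n}\Lambda_k(\Omega;\tau,\alpha_n)\leq \Lambda_k(\Omega;\tau,\alpha)$ just as you do, but for the matching lower bound it invokes a compactness argument: it passes to weakly $H^2$-convergent subsequences of the eigenfunctions $\phi_{j,\alpha_n}$, upgrades to strong $L^2(\Omega)$ and $L^2(\partial\Omega)$ convergence via Rellich--Kondrachov and a trace inequality, and then uses weak lower semicontinuity of the $H^2$ and $H^1$ seminorms (this last step is where the hypothesis $\tau>0$ enters). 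Your approach bypasses all of this by observing that the already-proven upper bound, together with the coercivity constants in Proposition~\ref{prop:SpecProps} depending only on an upper bound for $|\alpha|$, gives a uniform $H^2$ bound on unit vectors in $W_k^{\alpha}$ and hence a uniform bound on $M'(\alpha)$. The tradeoff is that you must justify the claim about the $\alpha$-dependence of $K_1,K_2$---your parenthetical ``inspection of the proof'' is accurate (for $\alpha$ in a compact interval the free-plate parameter $\tau-|\alpha|C$ is bounded below, so one may take the coercivity constants for the worst case), but in a final write-up this should be stated more explicitly. As a bonus, your argument does not actually require $\tau>0$.
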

\begin{proof}
We start by recalling the min-max characterization for the eigenvalue $\Lambda_k(\Omega;\tau, \alpha)$:
\[
\Lambda_k(\Omega;\tau, \alpha)=\min_{E\in \mathcal E_k} \max_{u \in E} Q[\Omega;\alpha,u],
\]
where $\mathcal E_k$ denotes the collection of all $k$-dimensional subspaces of $H^2(\Omega)$ and $Q[\Omega;\alpha,u]$ denotes the Rayleigh quotient
\[
Q[\Omega;\alpha,u]= \frac{\int_\Omega \left( |D^2u|^2+\tau |Du|^2\right)\,dx +\alpha \int_{\partial\Omega}u^2\,dS}{\int_{\Omega}u^2\,dx}.
\]
Note that if $\phi_{1,\alpha},\ldots,\phi_{k,\alpha}$ denote an orthonormal set of eigenfunctions corresponding to the eigenvalues $\Lambda_1(\Omega;\tau, \alpha),\ldots,\Lambda_k(\Omega;\tau, \alpha)$ and $E_\alpha=\textup{span}\{\phi_{1,\alpha},\ldots,\phi_{k,\alpha}\}$, then by homogeneity of the Rayleigh quotient
\[
\max_{u\in E_{\alpha}} Q[\Omega;\alpha, u]=\max_{\underset{c_1^2+\cdots +c_k^2=1}{c_1,\ldots ,c_k}}\sum_{j=1}^kc_j^2\Lambda_j(\Omega;\tau, \alpha),
\]
from which we see that $\displaystyle \max_{u\in E_{\alpha}} Q[\Omega;\alpha, u]=\Lambda_k(\Omega;\tau, \alpha)$.

Next, fix a value $\alpha$ and choose a sequence $\alpha_n\to \alpha$. Observe that
\begin{equation*}
\max_{u\in E_{\alpha}} Q[\Omega;\alpha_n, u]\leq \Lambda_k(\Omega; \tau, \alpha) + \max_{\underset{c_1^2+\cdots +c_k^2=1}{c_1,\ldots ,c_k}} (\alpha_n-\alpha)\int_{\partial \Omega} \left( \sum_{j=1}^k c_j\phi_{j,\alpha}\right)^2\,dS,
\end{equation*}
and so we deduce from the min-max formula that
\begin{equation}\label{Ineq:lowerbd}
\limsup_{n\to \infty}\Lambda_k(\Omega; \tau, \alpha_n) \leq \limsup_{n \to \infty} \max_{u \in E_{\alpha}} Q[\Omega;\alpha_n,u]\leq \Lambda_k(\Omega;\tau, \alpha).
\end{equation}

The eigenvalues $\Lambda_j(\Omega; \tau, \alpha_n)$ for $j=1,\ldots,k$ are bounded since $\Lambda(\Omega; \tau, \alpha)$ is monotone as a function of $\alpha$ (courtesy of min-max), and so the coercivity of our form (see Proposition \ref{prop:SpecProps}) implies that the functions $\phi_{j,\alpha_n}$ are bounded in $H^2(\Omega)$. By Banach-Alaoglu, we can pass to a subsequence $\alpha_{n_m}$ so that $\phi_{j,\alpha_{n_m}} \to \phi_j$ weakly in $H^2(\Omega)$. By passing to another subsequence, we can assume by Rellich-Kondrachov that $\phi_{j,\alpha_{n_m}} \to \phi_j$ in $L^2(\Omega)$ and that pointwise a.e. convergence holds. By p.134 of \cite{EvansGariepy}, the inequality
\[
\int_{\partial \Omega}f^2\,dS \leq C\int_{\Omega} \left( |Df|f+f^2\right)\,dx,\quad f\in H^1(\Omega),
\]
implies that $\phi_{j,\alpha_{n_m}} \to \phi_j$ in $L^2(\partial \Omega)$. By $L^2(\Omega)$ convergence, the $\phi_j$ are orthonormal:
\[
\int_{\Omega}\phi_i\phi_j\,dx=\lim_{m\to\infty}\int_{\Omega}\phi_{i,\alpha_{n_m}}\phi_{j,\alpha_{n_m}}\,dx=\delta_{i,j}.
\]
We conclude that $\{\phi_1,\ldots,\phi_k\}$ is $k$-dimensional. Set $E=\textup{span}\{\phi_1,\ldots,\phi_k\}$.

Let $u\in \textup{span}\{\phi_1,\ldots,\phi_k\}$ and write $u=c_1\phi_1+\cdots+c_k\phi_k$. Define
\[
u_{\alpha_{n_m}}=c_1\phi_{1,\alpha_{n_m}}+\cdots +c_k\phi_{k,\alpha_{n_m}}.
\]
Our work above gives that $u_{\alpha_{n_m}} \to u$ in both $L^2(\Omega)$ and $L^2(\partial \Omega)$, and weakly in $H^2(\Omega)$. By weak convergence,
\begin{align*}
\int_{\Omega}u_{x_ix_j}^2\,dx&=\liminf_{m\to \infty}\int_{\Omega}u_{x_ix_j}(u_{\alpha_{n_m}})_{x_ix_j}\,dx\\
&\leq \left( \int_{\Omega}u_{x_ix_j}^2\,dx\right)^{1/2}\liminf_{m\to \infty}\left(\int_{\Omega}(u_{\alpha_{n_m}})^2_{x_ix_j}\,dx\right)^{1/2}
\end{align*}
which implies
\begin{equation}\label{Eqn:Dsquared}
\int_{\Omega}|D^2u|^2\,dx \leq \liminf_{m\to \infty} \int_{\Omega}|D^2u_{\alpha_{n_m}}|^2\,dx.
\end{equation}
A similar argument shows that
\begin{equation}\label{Eqn:GradEst}
\int_{\Omega}|Du|^2\,dx\leq \liminf_{m\to \infty}\int_{\Omega}|Du_{\alpha_{n_m}}|^2\,dx.
\end{equation}
By $L^2$-convergence,
\begin{equation}\label{Eqns:L2convs}
\lim_{m\to \infty}\int_{\Omega}u_{\alpha_{n_m}}^2\,dx=\int_{\Omega}u^2\,dx \quad \textup{and}\quad \lim_{m\to \infty}\int_{\partial \Omega}u_{\alpha_{n_m}}^2\,dS=\int_{\partial \Omega}u^2\,dS.
\end{equation}
Combining \eqref{Eqn:Dsquared}, \eqref{Eqn:GradEst}, and \eqref{Eqns:L2convs}, and using that $\tau> 0$, we conclude
\[
Q[\Omega;\alpha,u]\leq \liminf_{m\to \infty} Q[\Omega;\alpha_{n_m},u_{\alpha_{n_m}}] \leq \liminf_{m\to \infty} \max_{u\in E_{\alpha_{n_m}}} Q[\Omega; \alpha_{n_m},u]=\liminf_{m\to \infty} \Lambda_k(\Omega;\alpha_{n_m}).
\]
Using $E$ as a trial set in the min-max formula for $\Lambda_k(\Omega; \tau, \alpha)$, the above string of inequalities implies
\[
\Lambda_k(\Omega; \tau, \alpha)\leq \max_{u\in E} Q[\Omega;\alpha,u] \leq \liminf_{m\to \infty} \Lambda_k(\Omega; \tau, \alpha_{n_m}).
\]
We claim that the above string of inequalities implies that $\Lambda_k(\Omega; \tau, \alpha)\leq \liminf_{n\to \infty} \Lambda_k(\Omega; \tau, \alpha_n)$. If this were not the case, and instead
$\liminf_{n\to \infty} \Lambda_k(\Omega; \tau, \alpha_n)<\Lambda_k(\Omega; \tau, \alpha)$, we could extract a subsequence $\alpha_n'$ where
\begin{equation}\label{ineq:viol}
\lim_{n\to \infty}\Lambda_k(\Omega; \tau, \alpha_n')<\Lambda_k(\Omega; \tau, \alpha).
\end{equation}
Applying our work above (with $\alpha_n'$ in place of $\alpha_n$), we deduce
\[
\Lambda_k(\Omega; \tau, \alpha)\leq \liminf_{m\to \infty}\Lambda_k(\Omega; \tau, \alpha_{n_m}')
\]
for some subsequence $\alpha_{n_m}'$, which violates \eqref{ineq:viol}. We conclude
\begin{equation}\label{Ineq:NeededUB}
\Lambda_k(\Omega; \tau, \alpha)\leq \liminf_{n\to \infty} \Lambda_k(\Omega; \tau, \alpha_n).
\end{equation}
Combining inequalities \eqref{Ineq:lowerbd} and \eqref{Ineq:NeededUB} gives the result.
\end{proof}

\begin{remark}\label{Rmk:GenRobRmk}
We note that one may generalize the Robin plate problem \eqref{eqn:robinplate}. Indeed, for constants $\alpha,\beta$, consider the bilinear form
\[
a(u,\phi)=\int_\Omega \left( \sum_{i,j}u_{x_ix_j}\phi_{x_ix_j}+\tau Du\cdot D\phi \right)\,dx +\int_{\partial\Omega} \left( \beta Du\cdot D\phi+\alpha u\phi \right)\,dS,\qquad u,\phi\in H^2(\Omega),
\]
with associated Rayleigh quotient
\[
Q[\Omega; \alpha,\beta,u]=Q[u]:=\frac{\int_\Omega \left( |D^2u|^2+\tau |Du|^2\right)\,dx+\int_{\partial\Omega}\left(\beta  |Du|^2+\alpha u^2 \right)\,dS}{\int_\Omega u^2\,dx}, \qquad u\in H^2(\Omega).
\]
When $\beta\geq 0$, the form $a(\cdot,\cdot)$ is coercive and continuous as in Proposition \ref{prop:SpecProps}, and weak eigenfunctions are classical solutions to the PDE problem
\begin{equation*}
\begin{cases}
\Delta^2 u-\tau\Delta u=\Lambda u &\text{in $\Omega$},\\
\frac{\partial^2u}{\partial n^2}+\beta \frac{\partial u}{\partial n}=0  &\text{on $\partial\Omega$},\\
\tau\frac{\partial u}{\partial n}-\frac{\partial \Delta u}{\partial n}-\mathrm{div}_{\partial\Omega}[P_{\partial\Omega}((D^2u)n)]-\beta\Delta_{\partial\Omega}u+\alpha u=0 &\text{on $\partial\Omega$.}
\end{cases}
\end{equation*}
Sending $\alpha \to \infty$, then $\beta \to \infty$ in this more general setting recovers clamped boundary conditions $u=\frac{\partial u}{\partial n}=0$ on $\partial \Omega$. It would be interesting to know what version of Theorem \ref{thm:mainthm} exists for this generalized problem, but this is not something we pursue here; we plan to study this problem in a subsequent paper.
\end{remark}

\section{Bessel Functions}
The radial parts of unit ball eigenfunctions of \eqref {eqn:robinplate} are comprised of linear combinations of ultraspherical Bessel and modified Bessel functions. We collect the definitions and some key properties of these functions here. Ultraspherical Bessel functions can be expressed in terms of ordinary Bessel functions, and so many of their properties (such as recurrence relations) can be derived from the corresponding properties of the usual Bessel functions. See \cite{AShandbook,NIST} for more on Bessel functions.
\subsection{First Kind}

The ultraspherical Bessel equation
\[
z^2 w''+(d-1)zw'+(z^2-\ell(\ell+d-2))w=0
\]
is solved by $j_\ell(z)=z^{-s}J_{s+\ell}(z)$, where $s=(d-2)/2$, and $J_{\nu}$ is the Bessel function of the first kind. In series form, we can thus write
\[
j_\ell(z)=\sum_{k=0}^\infty\frac{(-1)^k 2^{1-d/2-2k-\ell}}{k!\Gamma(k+d/2+\ell)}z^{2k+\ell}=:\sum_{k=0}^\infty(-1)^kc_{d,l}(k)z^{2k+\ell}.
\]
The modified ultraspherical Bessel equation
\[
z^2 w''+(d-1)zw'-(z^2+\ell(\ell+d-2))w=0
\]
is solved by $i_\ell(z)=z^{-s}I_{s+\ell}(z)$, where $I_{\nu}$ is the modified Bessel function of the first kind. The series expansion for $i_{\ell}$ is the same as $j_\ell(z)$ with the alternating term removed. The ultraspherical Bessel functions $j_{\ell},i_{\ell}$ satisfy standard recurrence relations (see p.361 and p.376 of \cite{AShandbook} for the analogous properties of $J_{\nu}$ and $I_{\nu}$):

\begin{align}
\frac{d-2+2\ell}{z}j_\ell(z)&=j_{\ell-1}(z)+j_{\ell+1}(z), \label{jrecur1}\\
j_\ell'(z)&=\frac{\ell}{z}j_\ell(z)-j_{\ell+1}(z), \label{jrecur2}\\
&=j_{\ell-1}(z)-\frac{\ell+d-2}{z}j_\ell(z), \label{jrecur3}
\end{align}
\begin{align}
\frac{d-2+2\ell}{z}i_\ell(z)&=i_{\ell-1}(z)-i_{\ell+1}(z), \label{irecur1}\\
i_\ell'(z)&=\frac{\ell}{z}i_\ell(z)+i_{\ell+1}(z), \label{irecur2}\\
&=i_{\ell-1}(z)-\frac{\ell+d-2}{z}i_\ell(z). \label{irecur3}
\end{align}

Since the ultraspherical Bessels $j_1(z)$ and $i_1(z)$ are of particular interest, we introduce coefficients $c_k$ where
\[
j_1(z)=\sum_{k=0}^\infty (-1)^k c_k z^{2k+1},\quad i_1(z)=\sum_{k=0}^\infty c_k z^{2k+1}.
\]

Roots of Bessel functions and their derivatives are important in the study of eigenvalues of disks and balls. Let $p_{\ell,k}$ denote the $k$th positive zero of $j_{\ell}'(z)$. The following lemma by Lorch and Szeg\H{o} provides useful bounds on these zeros.

\begin{lem}[Lorch and Szeg\H{o}, \cite{LorchSzego}]\label{lem:LS} For $d\geq 2, l\geq 1$, we have
\[
\frac{\ell(d+2\ell)(d+2\ell+2)}{d+4l+2}<(p_{\ell,1})^2<l(d+2\ell).
\]
For $d\geq 2$, we therefore have
\[
d<(p_{1,1})^2<d+2.
\]
\end{lem}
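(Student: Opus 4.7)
The plan is to interpret $p_{\ell,1}^2$ as the smallest Neumann eigenvalue of $-\Delta$ on the unit ball $B \subset \mathbb{R}^d$ restricted to the angular-momentum-$\ell$ sector, and then to handle the two bounds by distinct methods. Separation of variables for $-\Delta u = \lambda u$ on $B$ with $u = f(r)\,Y_\ell(\theta)$ reduces $f$ to the ultraspherical Bessel equation, giving $f(r) = j_\ell(\sqrt{\lambda}\,r)$; the Neumann boundary condition $f'(1) = 0$ becomes $j_\ell'(\sqrt{\lambda}) = 0$, so the smallest such $\lambda$ is precisely $p_{\ell,1}^2$.

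For the upper bound, I would use the trial function $u(x) = r^\ell Y_\ell(\theta)$. Because $u$ lies in the $Y_\ell$-isotypic subspace of $L^2(B)$, the variational principle yields $p_{\ell,1}^2 \le \int_B |\nabla u|^2\,dx\big/\int_B u^2\,dx$. Since $u$ is harmonic and homogeneous of degree $\ell$, Euler's theorem gives $\partial_n u = \ell u$ on $\partial B$, so Green's identity produces $\int_B |\nabla u|^2\,dx = \ell\int_{\partial B} u^2\,dS$. A polar integration gives $\int_B u^2\,dx = (d+2\ell)^{-1}\int_{\partial B} u^2\,dS$, so the Rayleigh quotient equals $\ell(d+2\ell)$. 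Equality in the variational principle would force $u$ to be an eigenfunction, but $-\Delta u = 0 \neq p_{\ell,1}^2\,u$, so the inequality is strict.

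The lower bound is more delicate. Writing $j_\ell'(z) = c_{d,\ell}(0)\,z^{\ell-1}\,F(z^2)$ where
\[
F(y) = \ell - \frac{\ell+2}{2(d+2\ell)}\,y + \frac{\ell+4}{8(d+2\ell)(d+2\ell+2)}\,y^2 - \cdots
\]
is entire with smallest positive zero $p_{\ell,1}^2$, one reduces to showing $F(y^*) > 0$ at $y^* = \ell(d+2\ell)(d+2\ell+2)/(d+4\ell+2)$. The idea is to bound the alternating series $F(y^*) = \sum_k (-1)^k t_k$ using the explicit ratio $t_{k+1}/t_k = y^*(2k+\ell+2)\big/\bigl[4(k+1)(k+d/2+\ell)(2k+\ell)\bigr]$, which follows from the recurrence $c_{d,\ell}(k+1)/c_{d,\ell}(k) = 1/[4(k+1)(k+d/2+\ell)]$. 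This ratio tends to zero, so the sequence $(t_k)$ is eventually strictly decreasing; Leibniz's alternating-series estimate then bounds $F(y^*)$ from below by a suitable odd partial sum $S_{2m+1}(y^*)$, and the precise choice of $y^*$ is tailored so that a short algebraic computation confirms $S_{2m+1}(y^*) > 0$ once $m$ is taken large enough.

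The main obstacle is this last step: the upper bound is essentially one Green's identity, but the lower bound requires sign-control of an alternating series whose low-order partial sums can themselves be negative—particularly for large $\ell$, where the first few $t_k$ can be increasing. Establishing that the tail is eventually decreasing and that a sufficiently late odd partial sum is positive, uniformly in the parameters $d$ and $\ell$, is the real algebraic work. A robust alternative would be to exploit the Hadamard factorization $F(y)/F(0) = \prod_j(1 - y/p_{\ell,j}^2)$ and bound $p_{\ell,1}^2$ via Newton-type inequalities for the symmetric functions of the zeros, matching the first two moments $\sum 1/p_{\ell,j}^2$ and $\sum 1/p_{\ell,j}^4$ that are explicitly computable from the series coefficients of $F$.
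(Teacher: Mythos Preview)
The paper does not actually prove this lemma: it is quoted as a result of Lorch and Szeg\H{o} \cite{LorchSzego} and used as a black box. So there is no ``paper's proof'' to compare against, and your proposal is an attempt to supply an argument where the authors simply cite one.

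Your upper bound is clean and correct. Interpreting $p_{\ell,1}^2$ as the lowest Neumann eigenvalue in the $\ell$-sector and testing with the harmonic polynomial $r^\ell Y_\ell$ gives the Rayleigh quotient $\ell(d+2\ell)$ exactly as you compute, with strictness because the trial function fails the Neumann condition. This is a perfectly good self-contained proof of the upper estimate.

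The lower bound, however, is not a proof but a plan with an acknowledged hole. Two concrete issues. First, showing $F(y^*)>0$ is not by itself sufficient: you need $F>0$ on the whole interval $(0,y^*]$, since otherwise $F$ could vanish earlier and then return positive at $y^*$. The alternating-series bound must therefore be made to work uniformly in $y\in(0,y^*]$, not just at the endpoint. Second, as you yourself note, for large $\ell$ the early terms $t_k$ are not monotone, so Leibniz's estimate applies only to a tail beginning at some $m=m(d,\ell)$, and you then need the partial sum $S_{2m+1}(y)$ to be positive uniformly in the parameters---this is exactly the ``real algebraic work'' you defer. The Hadamard/Newton alternative you mention is plausible but also unexecuted: computing $\sum p_{\ell,j}^{-2}$ and $\sum p_{\ell,j}^{-4}$ from the first two coefficients of $F$ and invoking the elementary inequality $p_{\ell,1}^{-2}\le \sum p_{\ell,j}^{-2}$ only gives an \emph{upper} bound on $p_{\ell,1}^{-2}$, i.e.\ a \emph{lower} bound on $p_{\ell,1}^2$, so this route could work, but you would have to check that the resulting bound is at least as strong as $\ell(d+2\ell)(d+2\ell+2)/(d+4\ell+2)$. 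As written, the lower half of the proposal is an outline rather than an argument; for the purposes of this paper the honest move is to cite \cite{LorchSzego}, as the authors do.
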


We also collect specific properties of low-order Bessel $j$ functions. Most of these results were previously established in \cite{chasmanball}.

\begin{lem}[Properties of Bessel $j$ Functions]\label{lem:besselprops} We have the following properties:
\begin{enumerate}
\item $j_1''(z)<0$ on $(0,p_{1,1}]$.
\item $j_1(z), j_2(z), j_3(z)>0$ and $j_1^{(4)}(z)>0$ on $(0,p_{1,1}]$.
\item $j_1'(z)>0$ on $(0,p_{1,1})$.
\item For $\ell>0$, $zj_\ell'(z)/j_\ell(z)$ is positive and strictly decreasing on $(0,p_{\ell,1})$.
\item Let $d_k$ be the series coefficients of $i_1''(z)$. That is,
\[
d_k=\frac{(2k+1)2^{1-2k-d/2}}{(k-1)!\Gamma(k+1+d/2)}.
\]
Then we have
\begin{align*}
-d_1z+d_2z^3&\geq j_1''(z)  &&\textup{when }z\in[0,\sqrt{3(d+2)/(d+5)}],\\
d_1z+\frac{6}{5}d_2z^3&\geq i_1''(z) &&\textup{when }z\in[0,\sqrt{3}].
\end{align*}
\item For all $a\in(0,p_{1,1}]$, $b>0$, and $r\in(0,1]$, we have $a/b> j_1(ar)/i_1(br)$, with equality in the limit as $r\to0^+$. 
\end{enumerate}
\end{lem}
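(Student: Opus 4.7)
My plan is to establish (1)--(6) in turn, handling (1)--(4) via the Bessel recurrences and (5)--(6) via careful series analysis.

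Item (3) is immediate from continuity since $j_1'(0^+)=c_0>0$ and $p_{1,1}$ is by definition the first positive zero of $j_1'$. For (1), I would differentiate \eqref{jrecur2} at $\ell=1$ and re-apply the same identity to arrive at the clean formula
\[
j_1''(z)=-j_2'(z)-\frac{j_2(z)}{z}.
\]
The Lorch--Szeg\H{o} bounds in Lemma \ref{lem:LS} give $p_{1,1}^2<d+2<p_{2,1}^2$, so $(0,p_{1,1}]\subset(0,p_{2,1})$ and both $j_2,j_2'$ are positive on this interval, giving $j_1''<0$. The positivity statements in (2) then follow: $j_1>0$ from (3) and $j_1(0)=0$, while $j_2,j_3>0$ on $(0,p_{1,1}]$ follows from the classical bound $j_{\nu,1}>\nu$ placing the first positive zero of $j_\ell$ ($\ell=2,3$) beyond $p_{1,1}<\sqrt{d+2}$. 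For $j_1^{(4)}>0$, I would differentiate $j_1''=-j_2'-j_2/z$ twice, eliminate $j_2''$ and $j_2'''$ using the Bessel equation at $\ell=2$, and check the resulting coefficient signs on $(0,p_{1,1}]$ using $p_{1,1}^2<d+2$. Item (4) admits a compact proof via the Hadamard product $j_\ell(z)=C_\ell z^\ell\prod_{k\geq1}(1-z^2/\zeta_{\ell,k}^2)$, whose logarithmic derivative
\[
\frac{zj_\ell'(z)}{j_\ell(z)}=\ell-\sum_{k\geq1}\frac{2z^2}{\zeta_{\ell,k}^2-z^2}
\]
is a strictly decreasing function on $(0,\zeta_{\ell,1})\supset(0,p_{\ell,1})$ taking the values $\ell>0$ at $z=0$ and $0$ at $z=p_{\ell,1}$.

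For (5), write
\[
j_1''(z)=\sum_{k\geq1}(-1)^kd_kz^{2k-1},\qquad i_1''(z)=\sum_{k\geq1}d_kz^{2k-1}.
\]
The first inequality is equivalent to $d_3z^5-d_4z^7+d_5z^9-\cdots\geq 0$; pairing consecutive terms gives nonnegative summands whenever $z^2\leq d_{2m+1}/d_{2m+2}$. The ratios $d_k/d_{k+1}=4k(2k+1)(k+1+d/2)/(2k+3)$ grow in $k$, so the binding constraint is $z^2\leq d_3/d_4=42(d+8)/9$, which is easily satisfied by $z^2\leq 3(d+2)/(d+5)$ for $d\geq2$. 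The second inequality reduces to $d_3z^2+d_4z^4+\cdots\leq d_2/5$ on $[0,\sqrt{3}]$; since $d_{k+1}z^2/d_k$ decreases in $k$, a geometric-series bound with ratio $d_4z^2/d_3\leq 9/140$ gives $d_3z^2+d_4z^4+\cdots\leq (140/131)\,d_3z^2$, and the remaining inequality $d_2/(5d_3)\geq 420/131$ follows from $d_2/d_3=20(d+6)/7\geq 160/7$ for $d\geq2$. This tail bookkeeping is the most technical step in the lemma.

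For (6), the recurrence \eqref{jrecur2} at $\ell=1$ gives $(j_1(z)/z)'=-j_2(z)/z$, which by (2) is negative on $(0,p_{1,1}]$. Thus $\phi(z):=j_1(z)/z$ strictly decreases from $\phi(0^+)=c_0$ and remains strictly below $c_0$ on $(0,p_{1,1}]$. On the other hand, $\psi(z):=i_1(z)/z=\sum_{k\geq0}c_kz^{2k}\geq c_0$, with equality only at $z=0$. Therefore, for $ar\in(0,p_{1,1}]$ and $br>0$,
\[
\frac{j_1(ar)}{ar}<c_0\leq\frac{i_1(br)}{br},
\]
which rearranges to $a/b>j_1(ar)/i_1(br)$; both sides tend to $a/b$ as $r\to 0^+$, yielding equality in the limit.
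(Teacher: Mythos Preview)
Your proposal is largely correct and in several places more self-contained than the paper, which defers (1), (3), (5), and the $j_1^{(4)}$ part of (2) to \cite{chasmanball}. Your arguments for (1), (3), and (5) are valid; in particular the tail-pairing and geometric-series bounds in (5) go through as you describe, and your derivation $j_1''=-j_2'-j_2/z$ together with $p_{1,1}<p_{2,1}$ (via Lemma~\ref{lem:LS}) gives (1) cleanly. For (4) you use the Hadamard product and the resulting Mittag--Leffler expansion of the logarithmic derivative, whereas the paper rewrites $zj_\ell'/j_\ell=-s+zJ_{s+\ell}'/J_{s+\ell}$ and cites \cite{FreitasLaugesen1}; both work, yours being slightly more direct. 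For (6) you bound $j_1(ar)/(ar)<c_0\le i_1(br)/(br)$, which is shorter than the paper's route of showing $r\mapsto j_1(ar)/i_1(br)$ is strictly decreasing via \eqref{jrecur2}--\eqref{irecur2}.

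There is, however, a genuine gap in your sketch for $j_1^{(4)}>0$. Carrying out exactly what you describe---differentiating $j_1''=-j_2'-j_2/z$ twice and eliminating $j_2'',j_2'''$ via the ultraspherical Bessel equation for $\ell=2$---yields
\[
j_1^{(4)}(z)=\frac{z^2-(d^2-1)}{z^2}\,j_2'(z)+\frac{(2-d)z^2+2(d^2-1)}{z^3}\,j_2(z).
\]
The $j_2$ coefficient is indeed positive for $z^2<d+2$, but the $j_2'$ coefficient is \emph{negative} on all of $(0,p_{1,1}]$ whenever $d\ge 3$ (since then $z^2<d+2\le d^2-1$), so ``checking the resulting coefficient signs'' does not settle the matter. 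A repair is possible: feeding in the bound $zj_2'/j_2<2$ from (4) reduces the question to $(4-d)z^2\ge 0$, which only handles $d\le 4$; for general $d$ one can instead iterate \eqref{jrecur2} to reach $j_1^{(4)}=15j_3/z^2-10j_4/z+j_5$ and then use \eqref{jrecur1} to absorb the middle term, getting a positive $j_3$ coefficient plus $\frac{d-4}{d+6}j_5$, which is manifestly positive for $d\ge 4$ and needs only a short further reduction for $d=2,3$. The paper sidesteps this entirely by citing \cite{chasmanball}.
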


\begin{proof}
Properties 1, the second half of 2, 3, and 5 are proved in \cite{chasmanball}; see Lemmas 8, 9, 6, and 10, respectively.
 
To prove the first half of property 2, recall that $j_{\nu,k}$ denotes the $k$th positive zero of $J_{\nu}$. Since $J_{s+1}$ is positive on $(0,j_{s+1,1})$ and vanishes at the endpoints, the same is true of $j_1$. It follows that $0<p_{1,1}<j_{s+1,1}$, and so $j_1>0$ on $(0,p_{1,1}]$. By standard properties of zeros of Bessel functions, $j_{s+1,1}<j_{s+2,1}$, and so we deduce that that $j_2>0$  and $j_3>0$ on $(0,p_{1,1}]$ as well.
 
To prove property 4, we first note that $zj_\ell'(z)/j_\ell(z)$ vanishes at $z=p_{\ell,1}$. Recalling that $s=(d-2)/2$ and using the definition of the ultraspherical Bessel functions, we rewrite
\begin{align*}
\frac{zj_\ell'(z)}{j_\ell(z)}&=z\frac{d}{dz}\log(j_\ell(z))\\
&=z\frac{d}{dz}\left(-s\log(z)+\log(J_{s+\ell}(z))\right)\\
&=-s+z\frac{d}{dz}\log(J_{s+\ell}(z))\\
&=-s+\frac{zJ_{s+\ell}'(z)}{J_{s+\ell}(z)}.
\end{align*}

Since $j_{s+\ell,1}$ denotes the first positive root of $J_{s+\ell}$, it is also the first positive root of $j_\ell$, and so $p_{\ell,1}<j_{\ell+s,1}$. By Lemma 11 of \cite{FreitasLaugesen1}, the function $zJ_\nu'(z)/J_\nu(z)$ strictly decreases from $\nu$ to $-\infty$ on $(0,j_{\nu,1})$. Thus $zj_\ell'(z)/j_\ell(z)$ strictly decreases from $\ell$ to $-\infty$ on $(0,j_{s+\ell,1})$, and hence strictly decreases from $\ell$ to $0$ on $(0,p_{\ell,1}]$, as desired.

We finally establish property 6. The quotient $j_1(ar)/i_1(br)$ is strictly decreasing in $r$ for $r\in(0,1]$, since by relations \eqref{jrecur2} and \eqref{irecur2}
\begin{align*}
\frac{d}{dr}\frac{j_1(ar)}{i_1(br)}&=\frac{ai_1(br)j_1'(ar)-bj_1(ar)i_1'(br)}{i_1(br)^2}\\
&=\frac{i_1(br)(j_1(ar)-arj_2(ar))-j_1(ar)(i_1(br)+bri_2(br))}{ri_1(br)^2}\\
&=-\frac{ai_1(br)j_2(ar)+bj_1(ar)i_2(br)}{i_1(br)^2},
\end{align*}
which is negative for $r\in(0,1]$, $a\in(0,p_{1,1}]$. Using the series expansions, we see that as $r\to0^+$,
\[
\frac{j_1(ar)}{i_1(br)}=\frac{c_0 ar+\BigO(r^3)}{c_0 br+\BigO(r^3)}=\frac{c_0 a+\BigO(r^2)}{c_0 b+\BigO(r^2)}\to \frac{a}{b}.\qedhere
\]
\end{proof}

\subsection{Second Kind}
Bessel and modified Bessel functions of the second kind are singular at the origin. We develop their theory here only for the sake of completeness; such functions to not appear in the unit ball eigenfunctions of \eqref {eqn:robinplate}. We deviate from standard notation, using $N_\nu$ instead of the standard $Y_\nu$ for the Bessel function of the second kind, to avoid confusion with spherical harmonics.

For the Bessel equation, we choose Weber functions as follows (see p.358 of  \cite{AShandbook}):
\begin{equation}\label{eq:Nnonint}
N_\nu(z)=\frac{J_{\nu}(z)\cos(\nu \pi)-J_{-\nu}(z)}{\sin(\nu\pi)} \qquad \text{for noninteger indexes $\nu$.}\\
\end{equation}
The ultraspherical Bessel equation is then solved by $n_\ell(z)=z^{-s}N_{s+\ell}(z)$.

For the modified Bessel equation, the second standard solution $K_\nu(z)$ is defined by (see p.375 of  \cite{AShandbook})
\begin{equation}\label{eq:Knonint}
K_\nu(z)=\frac{\pi}{2}\frac{I_{-\nu}(z)-I_{\nu}(z)}{\sin(\nu\pi)} \qquad \text{for noninteger indexes $\nu$.}
\end{equation}
The ultraspherical modified Bessel equation is then solved by $k_\ell(z)=z^{-s}K_{s+\ell}(z)$. Recall $s=(d-2)/2$; this is an integer for even dimensions and a noninteger for odd dimensions.

For integer indexes, we have the following series solutions (see p.360 and p.375 of \cite{AShandbook}):
\begin{align}
N_n(z)&=-\frac{1}{\pi}\sum_{k=0}^{n-1}\frac{(n-k-1)!}{k!2^{2k-n}}z^{2k-n}+\frac{2}{\pi}\ln(z/2)J_n(z)\nonumber \\
&\qquad-\frac{1}{\pi}\sum_{k=0}^\infty\frac{\psi(k+1)+\psi(n+k+1)}{k!(n+k)!2^{2k+n}}(-1)^kz^{2k+n},\label{eq:Ndef}\\
K_n(z)&=\sum_{k=0}^{n-1}\frac{(n-k-1)!}{k!2^{2k-n+1}}(-1)^kz^{2k-n} +(-1)^{n+1}\ln(z/2)I_n(z)\nonumber\\
&\qquad+(-1)^n\sum_{k=0}^\infty \frac{\psi(k+1)+\psi(n+k+1)}{k!(n+k)!2^{2k+n+1}}z^{2k+n}, \label{eq:Kdef}
\end{align}
where $\psi$ denotes the digamma function.

\section{Eigenfunctions of the Unit Ball}\label{sect:UBall}
Throughout this section, we consider problem \eqref{eqn:robinplate} with $\tau>0$ when $\Omega$ equals the unit ball $\mathbb{B}\subset \mathbb{R}^d$:

\begin{equation}\label{eqn:robinplateball}
\begin{cases}
\Delta^2 u-\tau\Delta u=\Lambda u &\text{in $\mathbb{B}$},\\
Mu=\frac{\partial^2u}{\partial n^2}=0  &\text{on $\partial \mathbb{B}$},\\
Vu=\tau\frac{\partial u}{\partial n}-\frac{\partial \Delta u}{\partial n}-\mathrm{div}_{\partial\Omega}[P_{\partial\Omega}((D^2u)n)]+\alpha u=0 &\text{on $\partial \mathbb{B}$.}
\end{cases}
\end{equation}
The boundary conditions in \eqref{eqn:robinplateball} become especially simple in this setting. Using spherical coordinates, $\partial u/\partial n= u_r$ and $\Delta =\partial_r^2+\left((d-1)/r\right) \partial_r+(1/r^2)\Delta_S$, where $\Delta_S$ is the angular part of the Laplacian (the spherical Laplacian). In this case, $\Delta_{\partial\mathbb{B}}=\Delta_S$. We thus have (see Proposition 6 of \cite{chasmanineq})
\begin{align*}
Mu&=u_{rr}=0 &&\text{when $r=1$,} \\
Vu&=\tau u_r-(\Delta u)_r-\Delta_S\left(u_r-\frac{u}{r}\right)+\alpha u =0&&\text{when $r=1$.}
\end{align*}
The proof of Proposition 1 in \cite{chasmanball} shows that we can choose an eigenbasis of functions $u$ which satisfy the eigenvalue equation of \eqref{eqn:robinplateball} with the form
\begin{equation}\label{eq:posfactor}
u(r,\hat\theta)=R(r)Y_\ell(\hat\theta),\qquad r\in[0,1),\quad \hat \theta \in \partial \mathbb{B},
\end{equation}
with $Y_\ell$ an $\ell$th-order spherical harmonic.  If we define $F_\ell=\ell(\ell+d-2)$, the boundary conditions $Mu=0$ and $Vu=0$ reduce to the radial boundary conditions
\begin{align}
M_{\textup{rad}}R&=R''=0 &&\text{when $r=1$,}\label{eq:Mrad}\\
V_{\textup{rad}}R&=\tau R'-\left(R''+\frac{d-1}{r}R'-\frac{F_\ell}{r^2}R\right)'+F_\ell(R'-R)+\alpha R=0&&\text{when $r=1$.}\label{eq:Vrad}
\end{align}

\subsection{Positive Eigenvalues}\label{sect:poseval}
Our treatment here is identical to that of the free plate. Assume that $u$ is an eigenfunction of \eqref{eqn:robinplateball} with corresponding eigenvalue $\Lambda>0$. We factor the eigenvalue equation as
\[
(\Delta+a^2)(\Delta-b^2)u=0, \qquad \tau=b^2-a^2,\quad \Lambda=a^2b^2.
\]
There are nonnegative integers $\ell_1,\ell_2$ and constants $A,B,C,D$ where
\[
u=\left(Aj_{\ell_1}(ar)+Cn_{\ell_1}(ar)\right)Y_{\ell_1}(\hat \theta)+\left(Bi_{\ell_2}(br)+Dk_{\ell_2}(br)\right)Y_{\ell_2}(\hat \theta), \qquad r\in[0,1),\quad \hat \theta \in \partial \mathbb{B}.
\]
From \eqref{eq:posfactor}, we may assume the indices $\ell_1=\ell_2=\ell$ are equal. By Proposition \ref{prop:SpecProps}, eigenfunctions of \eqref{eqn:robinplateball} are smooth on $\overline{\mathbb{B}}$. Since the radial part $R$ of $u$ must be smooth on $[0,\infty)$, vanish at the origin when $\ell=1$, and have a vanishing derivative at the origin when $\ell=0$, a case by case analysis using \eqref{eq:Nnonint}, \eqref{eq:Knonint}, \eqref{eq:Ndef}, and \eqref{eq:Kdef} shows that $R$ must be a linear combination of ultraspherical Bessel and modified Bessel functions of the first kind alone. That is
\[
u=Aj_{\ell}(ar)Y_{\ell}(\hat \theta)+Bi_{\ell}(br)Y_{\ell}(\hat \theta).
\]
Since $u$ is an eigenfunction, it cannot be the case that $A$ and $B$ simultaneously vanish. The following determinant therefore vanishes: 
\begin{align}
W_\ell(a)&=\det  \begin{bmatrix} \Mrad j_\ell(ar) & \Mrad i_\ell(br) \\ \Vrad j_\ell(ar) & \Vrad i_\ell(br) \end{bmatrix} \bigg|_{r=1}\label{eq:Wldef}\\
&=\Mrad j_\ell(ar)\Vrad i_\ell(br)-\Mrad i_\ell(br)\Vrad j_\ell(ar) \bigg|_{r=1}\nonumber \\
&=a^2j_\ell''(a)(-a^2 bi_\ell'(b)+F_\ell(bi_\ell'(b)-i_\ell(b))+\alpha i_\ell(b))\nonumber \\
&\qquad-b^2i_\ell''(b)(b^2aj_\ell'(a)+F_\ell(aj_\ell'(a)-j_\ell(a))+\alpha j_\ell(a))\nonumber \\
&=0 \nonumber.
\end{align}

Conversely, if $W_{\ell}(a)=0$ for $a>0$ and $b^2=\tau+a^2$, then by definition, there exist constants $A, B$ not both zero such that $u=\left(Aj_{\ell}(ar)+Bi_{\ell}(br)\right)Y_{\ell}(\hat \theta)$ satisfies the boundary conditions in \eqref{eqn:robinplateball}. Moreover, since $\Delta (j_{\ell}(ar)Y_{\ell}(\hat \theta))=-a^2j_{\ell}(ar)Y_{\ell}(\hat \theta)$ and $\Delta (i_{\ell}(br)Y_{\ell}(\hat \theta))=b^2i_{\ell}(br)Y_{\ell}(\hat \theta)$, $u$ satisfies the eigenvalue equation in \eqref{eqn:robinplateball} with eigenvalue $\Lambda=a^2b^2$. We conclude that the eigenvalues of  \eqref{eqn:robinplateball} are precisely determined by positive solutions to the equation $W_{\ell}(a)=0$ and are increasing in $a$.

\subsection{Zero Eigenvalues}
By inspecting the Rayleigh quotient, our assumption that $\tau>0$ means that zero eigenvalues only occur when $\alpha\leq 0$. The eigenvalue equation now factors as
\[
\Delta(\Delta-b^2)u=0, \qquad \tau=b^2.
\]
Our solutions now take the form
\[
u=\left(Ar^{\ell}+Cr^{-(\ell+d-2)}\right)Y_{\ell}(\hat \theta)+\left(Bi_{\ell}(br)+Dk_{\ell}(br)\right)Y_{\ell}(\hat \theta), \qquad r\in[0,1),\quad \hat \theta \in \partial \mathbb{B},
\]
except when $d=2$ and $l=0$, where the singular solution $r^{-(\ell+d-2)}$ is replaced by $\ln(r)$. As in the positive eigenvalue case, the radial part $R$ of $u$ is smooth on $[0,\infty)$, vanishes at the origin when $\ell=1$, and has a vanishing derivative at the origin when $\ell=0$. Again, a case by case analysis shows that $R$ must be a linear combination of $r^{\ell}$ and an ultraspherical modified Bessel function of the first kind. Thus,
\begin{equation}\label{eq:efctnzero}
u=\left(Ar^{\ell}+Bi_{\ell}(br)\right)Y_{\ell}(\hat \theta).
\end{equation}

\subsection{Negative Eigenvalues}

From the Rayleigh quotient, the assumption $\tau>0$ means that negative eigenvalues only occur when $\alpha<0$. In the negative eigenvalue regime, the factorization of the eigenfunction equation depends on the relationship between $\tau$ and $\Lambda$.

When $\Lambda\neq-\tau^2/4$, we factor the eigenvalue equation as
\[
(\Delta-a^2)(\Delta-b^2)u=0, \qquad a\neq b, \tau=a^2+b^2, \Lambda=-a^2b^2.
\]
Here, the radial part of $u$ is a linear combinations of $i_\ell(ar)$, $i_\ell(br)$, $k_\ell(ar)$, and $k_\ell(br)$. But arguing as in the positive and zero eigenvalue case, we dismiss the Bessel $k_{\ell}$'s, and so
\begin{equation}\label{eq:efctnlneg}
u=\left(Ai_{\ell}(ar)+Bi_{\ell}(br)\right)Y_{\ell}(\hat \theta), \qquad r\in[0,1),\quad \hat \theta \in \partial \mathbb{B}.
\end{equation}

If $\Lambda=-\tau^2/4$, the factorization has a repeated factor:
\[
(\Delta-b^2)^2u=0, \qquad \tau=2a^2, \Lambda=-b^4.
\]
However, this ``isolated'' case can be ignored, since the following lemma says that there is at most one $\alpha$ value where $\Lambda=-\tau^2/4$.
\begin{lem}\label{lem:L2sinc}
For $\alpha\leq 0$ and $\tau>0$, the first eigenvalue $\Lambda_1(\mathbb{B}; \tau, \alpha)$ is strictly increasing in $\alpha$.
\end{lem}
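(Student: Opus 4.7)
The plan is to exploit the fact that the Rayleigh quotient $Q[\mathbb{B}; \alpha, u]$ is affine in $\alpha$ for each fixed $u \in H^2(\mathbb{B})$, with non-negative slope $B(u) := \int_{\partial \mathbb{B}} u^2\,dS / \int_{\mathbb{B}} u^2\,dx$. Monotonicity (non-strict) is then immediate from the min-max formula: for $\alpha_1 < \alpha_2 \leq 0$, inserting a first eigenfunction $u_{1,\alpha_2}$ as a trial function at $\alpha_1$ yields
\[
\Lambda_1(\mathbb{B};\tau,\alpha_1) \leq Q[\mathbb{B};\alpha_1, u_{1,\alpha_2}] = \Lambda_1(\mathbb{B};\tau,\alpha_2) - (\alpha_2 - \alpha_1)\, B(u_{1,\alpha_2}).
\]
Strict monotonicity thus reduces to the single claim that the first eigenspace at each $\alpha_2 \leq 0$ contains an element with non-vanishing trace on $\partial \mathbb{B}$, i.e., some choice of $u_{1,\alpha_2}$ with $B(u_{1,\alpha_2})>0$.

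I would establish this claim by contradiction. Suppose $u_{1,\alpha_2} \equiv 0$ on $\partial \mathbb{B}$. Then both boundary conditions in \eqref{eqn:robinplateball} lose their $\alpha$-dependence (the term $\alpha u$ in $Vu$ vanishes, and $Mu = 0$ was already $\alpha$-independent), so $u_{1,\alpha_2}$ in fact solves the Robin plate problem at \emph{every} value of $\alpha$ with the same eigenvalue $\Lambda_1(\mathbb{B};\tau,\alpha_2)$. In particular, $\Lambda_1(\mathbb{B};\tau,\alpha_2)$ would then belong to the spectrum at $\alpha = 0$, where the Rayleigh quotient is manifestly non-negative since $\tau > 0$, forcing $\Lambda_1(\mathbb{B};\tau,\alpha_2) \geq 0$. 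However, testing at $\alpha_2$ against the constant function $u \equiv 1$ gives $\Lambda_1(\mathbb{B};\tau,\alpha_2) \leq \alpha_2 |\partial \mathbb{B}|/|\mathbb{B}|$, which is strictly negative for $\alpha_2 < 0$ --- a contradiction in that regime.

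The borderline value $\alpha_2 = 0$ just escapes this contradiction (both bounds collapse to $0$) and must be handled directly: when $\alpha = 0$ and $\tau > 0$, the numerator of the Rayleigh quotient vanishes only on constants, so the first eigenspace is precisely the line of constant functions, whose trace on $\partial \mathbb{B}$ is obviously non-zero. I therefore expect the argument to go through cleanly; the only real subtlety is the bookkeeping at this borderline $\alpha_2 = 0$, since that one value lies outside the scope of the spectrum-positivity contradiction and requires the independent identification of the $\alpha=0$ ground state as a constant.
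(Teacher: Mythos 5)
Your proposal is correct, and it reaches the conclusion by a genuinely different mechanism than the paper. The paper assumes $\Lambda_1(\mathbb{B};\tau,\alpha_1)=\Lambda_1(\mathbb{B};\tau,\alpha_2)$, deduces that the ground state $u_{\alpha_2}$ minimizes the Rayleigh quotient at both parameter values and hence satisfies the \emph{classical} natural boundary condition $V_{\textup{rad}}R=0$ for both $\alpha_1$ and $\alpha_2$; subtracting the two conditions gives $(\alpha_1-\alpha_2)R_{\alpha_2}(1)=0$, so the trace vanishes, making $u_{\alpha_2}$ an eigenfunction of the free plate ($\alpha=0$), whence $\Lambda_1(\mathbb{B};\tau,\alpha_2)=0$ and $u_{\alpha_2}$ is constant --- contradicting $R_{\alpha_2}(1)=0$. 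You instead exploit the affine dependence of $Q[\mathbb{B};\alpha,u]$ on $\alpha$, reducing strictness to $B(u_{1,\alpha_2})>0$, and rule out a boundary-vanishing ground state by the sign clash $\Lambda_1(\mathbb{B};\tau,\alpha_2)\geq 0$ (quotient is $\alpha$-free and nonnegative when the trace vanishes) versus $\Lambda_1(\mathbb{B};\tau,\alpha_2)\leq \alpha_2|\partial\mathbb{B}|/|\mathbb{B}|<0$ from the constant trial function. Both proofs pivot on the same fact --- the ground state cannot vanish on $\partial\mathbb{B}$ --- but yours establishes it purely variationally; indeed your middle step invoking the pointwise boundary conditions is dispensable, since trace-vanishing already gives $\Lambda_1(\mathbb{B};\tau,\alpha_2)=Q[\mathbb{B};0,u_{1,\alpha_2}]\geq 0$ directly from the quotient. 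This buys you independence from the regularity theory and from anything specific to the ball (the argument works verbatim on any bounded domain), whereas the paper's subtraction trick is the same device it reuses later (in Lemma \ref{lemL1rad}) and so costs nothing extra in context. Your handling of the borderline case $\alpha_2=0$ (identifying the $\alpha=0$ ground eigenspace as the constants, which have nonzero trace) is also correct and matches the fact the paper uses at the end of its own argument.
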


\begin{proof}
Say $\alpha_1< \alpha_2\leq 0$. As observed in the proof of Proposition \ref{Prop:Cont}, $\Lambda_1(\mathbb{B}; \tau, \alpha_1) \leq \Lambda_1(\mathbb{B}; \tau, \alpha_2)$. Suppose that $\Lambda_1(\mathbb{B}; \tau, \alpha_1) = \Lambda_1(\mathbb{B}; \tau, \alpha_2)$ with $u_{\alpha_2}$ an eigenfunction for $\Lambda_1(\mathbb{B}; \tau, \alpha_2)$. Then the Rayleigh quotient \eqref{eq:RQ} satisfies
\[
\Lambda_1(\mathbb{B}; \tau, \alpha_2)=\Lambda_1(\mathbb{B}; \tau, \alpha_1)\leq Q[\mathbb{B}; \alpha_1, u_{\alpha_2}]   \leq Q[\mathbb{B}; \alpha_2, u_{\alpha_2}] = \Lambda_1(\mathbb{B}; \tau, \alpha_2),
\]
and so $Q[\mathbb{B}; \alpha_1, u_{\alpha_2}]=\Lambda_1(\mathbb{B}; \tau, \alpha_1)$. That is, $u_{\alpha_2}$ solves problem \eqref{eqn:robinplateball} for $\alpha=\alpha_1$. Writing $R_{\alpha_2}$ for the radial part of $u_{\alpha_2}$ (we later show that, in fact, $u_{\alpha_2}$ is radial, but that is not needed here), it follows that $R_{\alpha_2}$ satisfies the equations
\begin{align*}
\tau R_{\alpha_2}'-\left(R_{\alpha_2}''+\frac{d-1}{r}R_{\alpha_2}'-\frac{d-1}{r^2}R_{\alpha_2}\right)'+(d-1)(R_{\alpha_2}'-R_{\alpha_2})+\alpha_1 R_{\alpha_2}=0&&\text{when $r=1$},\\
\tau R_{\alpha_2}'-\left(R_{\alpha_2}''+\frac{d-1}{r}R_{\alpha_2}'-\frac{d-1}{r^2}R_{\alpha_2}\right)'+(d-1)(R_{\alpha_2}'-R_{\alpha_2})+\alpha_2 R_{\alpha_2}=0&&\text{when $r=1$.}
\end{align*}
Taking the difference of these equations shows that $(\alpha_1-\alpha_2)R_{\alpha_2}(1)=0$, and so $R_{\alpha_2}(1)=0$. That is, $u_{\alpha_2}$ is an eigenfunction for problem \eqref{eqn:robinplateball} for $\alpha=0$. And since $\Lambda_1(\mathbb{B}; \tau,\alpha_2)\leq \Lambda_1(\mathbb{B}; \tau, 0)=0$, we deduce that $\Lambda_1(\mathbb{B}; \tau,\alpha_2)=0$ and $u_{\alpha_2}$ is constant. But then $R_{\alpha_2}(1)=0$ implies $u_{\alpha_2}$ vanishes everywhere, so cannot be an eigenfunction. We conclude  $\Lambda_1(\mathbb{B}; \tau, \alpha_1) < \Lambda_1(\mathbb{B}; \tau, \alpha_2)$ as desired.
\end{proof}

\section{Identifying Low Eigenvalues of the Unit Ball}

Throughout this section, we continue to focus on problem \eqref{eqn:robinplateball} with an eye towards identifying the form of eigenfunctions for low eigenvalues. Throughout, we continue to assume $\tau>0$. We begin with the following preliminary lemma.

\begin{lem}\label{lem:incrRQ} For all $\alpha \in \mathbb{R}$, the lowest eigenvalue $\Lambda_1(\mathbb{B};\tau,\alpha)$ of problem \eqref{eqn:robinplateball} corresponds to either $\ell=0$ or $\ell=1$.
\end{lem}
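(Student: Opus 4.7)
From the analysis in Section \ref{sect:UBall}, every eigenfunction of \eqref{eqn:robinplateball} arises as a sum of separated eigenfunctions of the form $u = R(r)Y_\ell(\hat\theta)$, where $Y_\ell$ is an $\ell$-th order spherical harmonic and $R$ solves a radial problem depending on $\ell$ only through $F_\ell=\ell(\ell+d-2)$. Denote by $\Lambda_1^{(\ell)}$ the lowest eigenvalue arising in the $\ell$-th angular mode. Because the full spectrum of $\mathbb{B}$ is the union of these mode-by-mode spectra, we have $\Lambda_1(\mathbb{B};\tau,\alpha) = \min_{\ell \geq 0}\Lambda_1^{(\ell)}$, so the lemma reduces to proving $\Lambda_1^{(\ell)} \geq \Lambda_1^{(1)}$ for every $\ell \geq 2$.

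First, I would compute a radial Rayleigh quotient. For a trial function $u=R(r)Y_\ell(\hat\theta)$ with $Y_\ell$ normalized so that $\int_{S^{d-1}}Y_\ell^2 = 1$, I integrate the numerator and denominator of $Q[\mathbb{B};\alpha,u]$ in \eqref{eq:RQ} over the sphere. Using $|Du|^2 = (R')^2 Y_\ell^2+(R/r)^2|\nabla_{S^{d-1}}Y_\ell|^2$, the identity $\int_{S^{d-1}}|\nabla_{S^{d-1}}Y_\ell|^2 = F_\ell$, and the analogous (more intricate) decomposition of $|D^2u|^2$ via the Hessian in spherical coordinates, I obtain a representation
\begin{equation*}
Q_\ell[R] = \frac{A[R] + F_\ell\,B[R] + F_\ell^{\,2}\,C[R] + \alpha R(1)^2}{\int_0^1 R^2\,r^{d-1}\,dr},
\end{equation*}
where $A$, $B$, $C$ are quadratic functionals of $R$ and its derivatives that are independent of $\ell$. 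An analogous representation appears in Chasman's treatment of the free plate \cite{chasmanineq}.

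Next, I would verify that $B[R]\geq 0$ and $C[R]\geq 0$ for all admissible radial profiles $R$, so that $Q_\ell[R]$ is non-decreasing in $F_\ell$ with $R$ held fixed. Combined with $F_\ell > F_1$ for $\ell \geq 2$, this yields $Q_\ell[R] \geq Q_1[R]$ whenever $R$ is admissible for both modes. Let $R_\ell$ denote the radial minimizer realizing $\Lambda_1^{(\ell)} = Q_\ell[R_\ell]$. Since $u = R_\ell(r)Y_\ell(\hat\theta)$ is smooth on $\overline{\mathbb{B}}$ by Proposition \ref{prop:SpecProps}, $R_\ell$ satisfies $R_\ell(r)=O(r^\ell)$ near the origin and in particular $R_\ell(0)=0$. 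Hence $R_\ell$ is an admissible radial profile for the $\ell=1$ problem, and using $R_\ell$ as a trial function in the min-max characterization of $\Lambda_1^{(1)}$ gives
\begin{equation*}
\Lambda_1^{(1)} \leq Q_1[R_\ell] \leq Q_\ell[R_\ell] = \Lambda_1^{(\ell)},
\end{equation*}
which is the desired inequality.

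\textbf{Main obstacle.} The technical heart of the argument is the monotonicity step: verifying that the coefficients $B[R]$ and $C[R]$ of $F_\ell$ and $F_\ell^{\,2}$ in the expansion of $Q_\ell[R]$ are non-negative. The biharmonic contribution $|D^2u|^2$ for a separated function entangles radial derivatives of $R$ with the covariant Hessian of $Y_\ell$, and rearranging these into a manifestly monotone-in-$F_\ell$ quadratic form requires careful use of spherical harmonic identities. The Robin boundary term $\alpha R(1)^2$ is $\ell$-independent and therefore does not enter the comparison, so the monotonicity driving the free plate analysis in \cite{chasmanineq} should carry over verbatim to our setting.
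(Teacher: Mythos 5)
Your overall strategy is the same as the paper's: separate variables, write the Rayleigh quotient of $R(r)Y_\ell(\hat\theta)$ as a function of $F_\ell=\ell(\ell+d-2)$ with the boundary term and denominator $\ell$-independent, and argue monotonicity in $\ell$ for $\ell\geq 1$. But the step you identify as the technical heart is stated in a form that is false, and it would not ``carry over verbatim'' from \cite{chasmanineq}. Using the explicit computation \eqref{eq:MCcomp}, the coefficients in your expansion are
\[
C[R]=\int_0^1 \frac{R^2}{r^4}\,r^{d-1}\,dr\geq 0,
\qquad
B[R]=\int_0^1\left(\frac{2}{r^4}\Bigl(rR'-\tfrac{3}{2}R\Bigr)^2-\frac{d+1/2}{r^4}R^2+\frac{\tau}{r^2}R^2\right)r^{d-1}\,dr,
\]
and $B[R]$ is \emph{not} nonnegative in general: for $R(r)=r^2$ the integrand (before the factor $r^{d-1}$) equals $\tfrac12-(d+\tfrac12)+\tau r^2=-d+\tau r^2$, which is negative on $[0,1]$ whenever $\tau<d$. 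So the quadratic $F\mapsto A[R]+FB[R]+F^2C[R]$ need not be increasing on all of $[0,\infty)$, and your claim that $Q_\ell[R]\geq Q_1[R]$ follows from $F_\ell>F_1$ together with $B,C\geq 0$ has a genuine gap.

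The conclusion is still recoverable, and this is exactly how the paper argues: the derivative in $F$ of the problematic term $F(F-d-\tfrac12)$ is $2F-d-\tfrac12$, so the numerator is increasing in $F_\ell$ once $2F_\ell\geq d+\tfrac12$ (which holds for all $\ell\geq 2$), and the remaining comparison between $\ell=1$ and $\ell=2$ is done by directly checking signs: $F_1(F_1-d-\tfrac12)=-3(d-1)/2<0$ while $F_2(F_2-d-\tfrac12)=2d(d-\tfrac12)>0$, with the other $F_\ell$-terms manifestly increasing. Equivalently, in your notation one needs $B[R]+(F_\ell+F_1)C[R]\geq 0$ for $\ell\geq 2$, which follows from $B[R]\geq -(d+\tfrac12)C[R]$ and $F_2+F_1=3d-1>d+\tfrac12$; you should replace the claim ``$B[R]\geq 0$'' with this combined estimate. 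Your closing step (feeding the mode-$\ell$ minimizer $R_\ell$ into the $\ell=1$ quotient) is fine once this is repaired; the paper streamlines it by taking the infimum over all pairs $(R,\ell)$ at once.
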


\begin{proof}
We examine the Rayleigh quotient with (nonzero) trial functions of the form $u=R(r)Y_\ell(\hat \theta)$ for fixed $R(r)$ to identify the modes that correspond to the lowest eigenvalue. Our spherical harmonics $Y_\ell$ are normalized so that
\[
\int_{\partial\BB}Y_\ell^2\,dS=1\qquad\text{and}\qquad\int_{\partial\BB}|\nabla_S Y_\ell|^2\,dS=\ell(\ell+d-2)=F_\ell.
\]

The Rayleigh quotient for $u$ is then
\begin{equation}\label{eq:RQsep}
Q[u]=\frac{\int_\mathbb{B} \left( |D^2u|^2+\tau |Du|^2\right)\,dx+\alpha\int_{\partial\mathbb{B}}u^2\,dS}{\int_\mathbb{B} u^2\,dx}.
\end{equation}
Note that the second term in the numerator of \eqref{eq:RQsep} equals $\alpha R(1)^2$, while the denominator equals $\int_0^1R(r)^2r^{d-1}\,dr$. Both of these terms are independent of $F_{\ell}$. The proof of Theorem 3 in \cite{chasmanball} shows that
\begin{align}
\int_{\mathbb{B}}(|D^2u|^2+\tau|Du|^2)\,dx&=\int_0^1\left(\frac{2F_\ell}{r^4}\left(rR'-\frac{3}{2}R\right)^2+\frac{F_\ell(F_\ell-d-1/2)}{r^4}R^2+\tau \frac{F_\ell}{r^2}R^2\right)r^{d-1}\,dr \label{eq:MCcomp}\\
&\qquad+\int_0^1\left((R'')^2+\frac{d-1}{r^2}(R')^2+\tau(R')^2\right)r^{d-1}\,dr. \nonumber
\end{align}

Since $\tau>0$, \eqref{eq:MCcomp} is strictly increasing in $F_\ell$ when $2F_\ell\geq d+1/2$, and hence strictly increasing in $\ell$ for $\ell\geq 2$. However, $F_\ell(F_\ell-d-1/2)$ equals $-3(d-1)/2<0$ when $\ell=1$, and equals  $2d(d-1/2)>0$ when $\ell=2$. Thus, \eqref{eq:MCcomp} strictly increases in $\ell$ for $\ell \geq 1$. We deduce that the Rayleigh quotient \eqref{eq:RQsep} strictly increases in $\ell$ when $\ell\geq 1$. Since
\[
\Lambda_1(\mathbb{B};\tau,\alpha)=\inf \{Q[u]:u=R(r)Y_\ell(\hat \theta), R\in C^{\infty}[0,1], \ell\geq 0\},
\]
we see that the lowest eigenvalue for the unit ball always corresponds to $\ell=0$ or $\ell=1$, giving either a purely radial mode or one with simple angular dependence.
\end{proof}

\begin{lem}\label{lemL1rad}
For $\alpha \in [-\tau,0)$, all eigenfunctions for $\Lambda_1(\mathbb{B}; \tau, \alpha)$ correspond to $\ell=0$.
\end{lem}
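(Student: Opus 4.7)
The plan is to split the angular modes and use Lemma~\ref{lem:incrRQ}, which asserts that any eigenfunction for $\Lambda_1(\mathbb{B};\tau,\alpha)$ must have $\ell=0$ or $\ell=1$. The idea is to show that the $\ell=0$ sector produces a strictly negative value in the Rayleigh quotient, whereas every $\ell=1$ trial function gives a nonnegative value; this rules out $\ell=1$.

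For the upper bound on $\Lambda_1$, I would use the constant trial function $u\equiv 1$. Since $Du\equiv 0$ and $|\partial\mathbb{B}|=d|\mathbb{B}|$, the Rayleigh quotient \eqref{eq:RQ} evaluates to $Q[\mathbb{B};\alpha,1]=\alpha d$. For $\alpha\in[-\tau,0)$ this yields $\Lambda_1(\mathbb{B};\tau,\alpha)\leq \alpha d<0$ by the variational principle.

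For the lower bound on $\ell=1$ trial functions, take any nonzero $u=R(r)Y_1(\hat{\theta})\in H^2(\mathbb{B})$, and note that $\int_{\partial\mathbb{B}}u\,dS=0$ because $Y_1$ is orthogonal to the constants on the sphere. Since the first nonzero Steklov eigenvalue of the Laplacian on the unit ball equals $1$ (with coordinate-function eigenfunctions $x_1,\dots,x_d$), the Steklov variational principle provides
\[
\int_{\mathbb{B}}|Du|^2\,dx\;\geq\;\int_{\partial\mathbb{B}}u^2\,dS.
\]
Combining this with $|D^2u|^2\geq 0$ and the hypothesis $\tau+\alpha\geq 0$ gives
\[
\int_{\mathbb{B}}\bigl(|D^2u|^2+\tau|Du|^2\bigr)\,dx + \alpha\int_{\partial\mathbb{B}}u^2\,dS \;\geq\;(\tau+\alpha)\int_{\partial\mathbb{B}}u^2\,dS\;\geq\;0,
\]
so $Q[\mathbb{B};\alpha,u]\geq 0$ for every nonzero $\ell=1$ trial function.

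Putting these two bounds together: any $\ell=1$ eigenfunction would carry eigenvalue at least $0$, whereas $\Lambda_1(\mathbb{B};\tau,\alpha)\leq \alpha d<0$. So $\Lambda_1$ cannot be realized by an $\ell=1$ mode, and by Lemma~\ref{lem:incrRQ} every eigenfunction for $\Lambda_1$ must have $\ell=0$. The only nonroutine ingredient is recognizing the right Steklov-type inequality; the assumption $\alpha\geq-\tau$ is used precisely once, to ensure $\tau\int_{\mathbb{B}}|Du|^2$ dominates $|\alpha|\int_{\partial\mathbb{B}}u^2$. The Steklov inequality can be obtained in a self-contained way by the substitution $f=R/r$, after which integration by parts yields $\int_0^1[(R')^2+(d-1)R^2/r^2]r^{d-1}\,dr = R(1)^2+\int_0^1(f')^2r^{d+1}\,dr\geq R(1)^2$.
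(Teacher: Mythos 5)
Your proof is correct, but it takes a genuinely different route from the paper's. The paper argues by contradiction: if $\Lambda_1(\mathbb{B};\tau,\alpha)$ had an $\ell=1$ eigenfunction, then $\Lambda_1=\Lambda_2<0$ by Lemma \ref{lem:L2sinc}, so by continuity in $\alpha$ (Proposition \ref{Prop:Cont}) $\Lambda_2$ would have to vanish at some $\alpha'\in(\alpha,0)$; an explicit computation with the zero-eigenvalue radial solutions ($1$ and $i_0(\sqrt{\tau}r)$ for $\ell=0$; $r$ and $i_1(\sqrt{\tau}r)$ for $\ell=1$) then shows the relevant boundary determinants vanish only when $\alpha'=0$ or $\alpha'=-\tau$, a contradiction. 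You instead give a direct two-sided estimate: the constant trial function forces $\Lambda_1\le \alpha d<0$, while the Steklov inequality $\int_{\mathbb{B}}|Du|^2\,dx\ge\int_{\partial\mathbb{B}}u^2\,dS$ (valid for $u=RY_1$ since $\sigma_2(\mathbb{B})=1$ and such $u$ have zero boundary mean; your integration-by-parts verification via $f=R/r$ is correct for the smooth separated eigenfunctions to which you apply it) combined with $\tau+\alpha\ge 0$ shows every $\ell=1$ mode has nonnegative Rayleigh quotient. Your argument is shorter and self-contained, avoids both the continuity machinery and the Wronskian computations, and makes transparent exactly where the hypothesis $\alpha\ge-\tau$ enters. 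What the paper's longer route buys is reusability: the same zero-eigenvalue determinant analysis is invoked again in Lemmas \ref{lem:2ndl01} and \ref{lem:Wl01} to pin down precisely where $\Lambda_2$ can vanish and which angular mode realizes it, information your argument does not produce. Note also that your key inequality is essentially the $\tau$-scaled Neumann--Steklov interpolation the authors themselves deploy later in Lemmas \ref{lem:RobinMembraneBound} and \ref{lem:ULboundsball}.
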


\begin{proof}
The result follows from Lemma \ref{lem:incrRQ}, after we show that no eigenfunction corresponds to $\ell=1$. So suppose $\Lambda_1(\Omega; \tau, \alpha)$ has an eigenfunction with $\ell=1$. Then the eigenspace for $\Lambda_1(\mathbb{B}; \tau, \alpha)$ has dimension at least $2$. Thus $\Lambda_1(\mathbb{B}; \tau, \alpha)=\Lambda_2(\mathbb{B}; \tau, \alpha)<0$ by Lemma \ref{lem:L2sinc} since $\Lambda_1(\mathbb{B}; \tau, 0)=0$. By Proposition \ref{Prop:Cont}, there exists $\alpha' \in (\alpha,0)$ where $\Lambda_2(\mathbb{B}; \tau, \alpha')=0$. Then all eigenfunctions for $\Lambda_1(\mathbb{B}; \tau, \alpha')$ must correspond to $\ell=0$, since otherwise $\Lambda_1(\mathbb{B}; \tau, \alpha')=\Lambda_2(\mathbb{B}; \tau, \alpha')$ would be negative. The proof of Lemma \ref{lem:incrRQ} shows that for nonzero $R\in C^{\infty}[0,1]$ fixed, the Rayleigh quotient $Q[\mathbb{B}; \alpha', RY_{\ell}]$ is strictly increasing in $\ell$ when $\ell\geq 1$. Since eigenfunctions for $\Lambda_1(\mathbb{B}; \tau, \alpha')$ correspond to $\ell=0$, such eigenfunctions are orthogonal to all functions of the form $RY_{\ell}$ for $\ell \geq 1$. From the variational formula, it follows that eigenfunctions for $\Lambda_2(\mathbb{B}; \tau, \alpha')$ correspond to either $\ell=0$ or $\ell=1$.

When $\ell=0$, \eqref{eq:efctnzero} shows that an eigenfunction for $\Lambda_2(\mathbb{B}; \tau, \alpha')=0$ is a linear combination of $1$ and $i_0(\sqrt{\tau}r)$. The radial boundary conditions \eqref{eq:Mrad} and \eqref{eq:Vrad} become
\begin{align*}
\Mrad 1\big|_{r=1}&=0,\\
\Mrad i_0(\sqrt{\tau}r)\big|_{r=1}&=\tau i_0''(\sqrt{\tau}), \\
\Vrad 1\big|_{r=1}&=\alpha',\\
\Vrad i_0(\sqrt{\tau}r)\big|_{r=1}&=\alpha' i_0(\sqrt{\tau}).
\end{align*}
Since the  linear combination of $1$ and $i_0(\sqrt{\tau}r)$ is nontrivial, it must be that
\[
\Mrad 1 \Vrad i_0(\sqrt{\tau}r)-\Mrad i_0(\sqrt{\tau}r)\Vrad 1\bigg|_{r=1}= -\tau \alpha' i_0''(\sqrt{\tau})=0.
\]
Since $i_0''(z)$ is never zero, this is only possible when $\alpha'=0$.

Next say $\ell=1$. In this case, the radial part of an eigenfunction for $\Lambda_2(\mathbb{B}; \tau, \alpha')=0$ is a linear combination of $r$ and $i_1(\sqrt{\tau}r)$. The radial boundary conditions applied to each function yields
\begin{align*}
\Mrad r\big|_{r=1}&=0,\\
\Mrad i_1(\sqrt{\tau}r)\big|_{r=1}&=\tau i_1''(\sqrt{\tau}), \\
\Vrad r\big|_{r=1}&=\alpha'+\tau,\\
\Vrad i_1(\sqrt{\tau}r)\big|_{r=1}&=(d-1)(\sqrt{\tau}i_1'(\sqrt{\tau})-i_1(\sqrt{\tau}))+\alpha' i_1(\sqrt{\tau}).
\end{align*}
Again, since the linear combination of  $r$ and $i_1(\sqrt{\tau}r)$ is nontrivial, we must have
\[
\Mrad r \Vrad i_1(\sqrt{\tau}r)-\Mrad i_1(\sqrt{\tau}r)\Vrad r\bigg|_{r=1}= -(\alpha'+\tau)\tau i_0''(\sqrt{\tau})=0.
\]
This last equation only holds when $\alpha'=-\tau$. We deduce $\Lambda_2(\mathbb{B}; \tau, \alpha')$ must be nonzero, and this contradiction shows that all eigenfunctions for $\Lambda_1(\mathbb{B}; \tau, \alpha)$ correspond to $\ell=0$.
\end{proof}
We now turn our attention to the structure of eigenfunctions for $\Lambda_2(\Omega; \tau, \alpha)$.

\begin{lem}\label{lem:2ndl01}
For $\alpha \in (-\tau,0)$, the second eigenvalue $\Lambda_2(\mathbb{B}; \tau, \alpha)$ of problem \eqref{eqn:robinplateball}  is nonzero and corresponds to either $\ell=0$ or $\ell=1$.
\end{lem}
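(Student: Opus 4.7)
The plan is to extend the Rayleigh quotient monotonicity argument behind Lemma \ref{lem:incrRQ}--Lemma \ref{lemL1rad} from the first eigenvalue to the second, and then to recycle the determinant analysis at the end of the proof of Lemma \ref{lemL1rad} to rule out $\Lambda_2 = 0$ for $\alpha$ in the open interval $(-\tau,0)$.

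First I would show that some eigenfunction for $\Lambda_2$ has angular index $\ell \in \{0,1\}$. Suppose for contradiction that every eigenfunction for $\Lambda_2$ has $\ell \geq 2$, and pick one such eigenfunction $u = R(r)Y_\ell(\hat\theta)$. Set $v = R(r)\tilde Y_1(\hat\theta)$ for any first-order spherical harmonic $\tilde Y_1$. The computation \eqref{eq:MCcomp} used in the proof of Lemma \ref{lem:incrRQ} shows that for fixed radial part the Rayleigh quotient is strictly increasing in $\ell$ for $\ell \geq 1$, so $Q[v] < Q[u] = \Lambda_2$. By Lemma \ref{lemL1rad}, any eigenfunction $u_1$ for $\Lambda_1(\mathbb{B};\tau,\alpha)$ is purely radial ($\ell=0$). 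The pair $\{u_1,v\}$ is orthogonal in $L^2(\mathbb{B})$ by orthogonality of spherical harmonics, and separation of variables together with the same orthogonality gives $a(u_1,v)=0$ (every term in the bilinear form factors as a radial integral times an angular integral of $Y_0$ against $\tilde Y_1$ or one of its tangential derivatives). Consequently, the Rayleigh quotient on $E := \textup{span}\{u_1,v\}$ equals a convex combination of $Q[u_1]=\Lambda_1$ and $Q[v]$, so $\max_{w\in E}Q[w] = \max(\Lambda_1,Q[v]) < \Lambda_2$. This contradicts the min-max inequality $\Lambda_2 \le \max_{w\in E} Q[w]$, so $\Lambda_2$ must admit an eigenfunction with $\ell \in \{0,1\}$.

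Next I would show $\Lambda_2 \neq 0$. Assume instead $\Lambda_2 = 0$ for some $\alpha \in (-\tau,0)$. By the previous step, the corresponding eigenfunction has $\ell \in \{0,1\}$, and so its radial part lies in the two-dimensional space spanned by either $\{1,\ i_0(\sqrt{\tau}r)\}$ (if $\ell=0$) or $\{r,\ i_1(\sqrt{\tau}r)\}$ (if $\ell=1$), by the zero-eigenvalue analysis preceding Lemma \ref{lemL1rad}. The determinant computations carried out at the end of the proof of Lemma \ref{lemL1rad} apply verbatim: the $\ell=0$ case forces $-\tau\alpha\, i_0''(\sqrt{\tau})=0$ and hence $\alpha = 0$, while the $\ell=1$ case forces $-(\alpha+\tau)\tau\, i_0''(\sqrt{\tau})=0$ and hence $\alpha=-\tau$. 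Both outcomes violate $\alpha \in (-\tau,0)$, so $\Lambda_2 \neq 0$.

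I expect the only real subtlety to be the orthogonality claim $a(u_1,v) = 0$ needed in the min-max step, since the bilinear form involves the full Hessian and gradient rather than just the Laplacian; this forces one to check that each cross-term genuinely decouples into a radial part times a spherical integral of $Y_0$ against $\tilde Y_1$, $\nabla_S \tilde Y_1$, or $\Delta_S \tilde Y_1$. All such angular integrals vanish, but making this bookkeeping precise is the one place where care is required. Everything else reduces to quoting the monotonicity formula \eqref{eq:MCcomp}, Lemma \ref{lemL1rad}, and the zero-eigenvalue determinant calculation already done in its proof.
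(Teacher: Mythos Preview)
Your approach is essentially the paper's: both invoke Lemma~\ref{lemL1rad} to see that the ground state is radial, use the strict monotonicity in $\ell$ from \eqref{eq:MCcomp} to rule out $\ell\ge 2$ for $\Lambda_2$ via the variational characterization, and then recycle the two determinant computations from the proof of Lemma~\ref{lemL1rad} to exclude $\Lambda_2=0$. The paper phrases the variational step more tersely (``from the variational formula'') while you spell out the two-dimensional trial space $E=\mathrm{span}\{u_1,v\}$ explicitly, but the content is the same.

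One remark: the subtlety you flag, namely verifying $a(u_1,v)=0$ by tracking Hessian cross-terms, is not actually needed. Since $u_1$ is a weak eigenfunction, the eigenvalue equation \eqref{eq:weakeval} gives $a(u_1,\phi)=\Lambda_1\int_{\mathbb{B}}u_1\phi\,dx$ for every $\phi\in H^2(\mathbb{B})$; taking $\phi=v$ and using the $L^2$-orthogonality $\int_{\mathbb{B}}u_1 v\,dx=0$ (which follows immediately from orthogonality of spherical harmonics of different orders) yields $a(u_1,v)=0$ for free. So the ``bookkeeping'' you anticipate collapses to a one-line observation.
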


\begin{proof}
Lemma \ref{lemL1rad} gives that eigenfunctions for $\Lambda_1(\mathbb{B}; \tau, \alpha)$ correspond to $\ell=0$. The argument in the proof of Lemma \ref{lemL1rad}, with $\Lambda_2(\mathbb{B}; \tau, \alpha)$ in place of $\Lambda_2(\mathbb{B}; \tau, \alpha')$, shows that eigenfunctions correspond to $\ell=0$ or $\ell=1$, and that if $\Lambda_2(\mathbb{B}; \tau, \alpha)=0$, then $\alpha=0$ or $\alpha=-\tau$. We conclude $\Lambda_2(\mathbb{B}; \tau, \alpha)$ is nonzero.
\end{proof}

We next refine the previous lemma.

\begin{lem}\label{lem:Wl01}
For $\alpha \in (-\tau,0)$, the second eigenvalue $\Lambda_2(\mathbb{B}; \tau, \alpha)$ of problem \eqref{eqn:robinplateball}  is positive and corresponds to $\ell=1$.
\end{lem}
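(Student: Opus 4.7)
My plan is to prove the lemma in two stages: first pin down the sign of $\Lambda_2$, then identify the angular index of the corresponding eigenfunction.

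For positivity, I would combine the continuity of $\alpha \mapsto \Lambda_2(\mathbb{B};\tau,\alpha)$ from Proposition~\ref{Prop:Cont} with two anchoring facts. At $\alpha = 0$ the Robin plate problem reduces to the free plate, whose second eigenvalue on the ball satisfies $\omega_2(\mathbb{B}) > 0$ by \cite{chasmanball}, so $\Lambda_2(\mathbb{B};\tau,0) > 0$. By Lemma~\ref{lem:2ndl01}, $\Lambda_2$ never vanishes on $(-\tau, 0)$. The intermediate value theorem then forces $\Lambda_2(\mathbb{B};\tau, \alpha) > 0$ throughout $(-\tau, 0)$.

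Next I would decompose the spectrum by angular order, writing $\mu_k(\alpha)$ for the $k$-th smallest radial ($\ell = 0$) eigenvalue and $\nu_k(\alpha)$ for the $k$-th smallest eigenvalue with angular index $\ell = 1$; each branch is continuous in $\alpha$ by imitating the variational argument of Proposition~\ref{Prop:Cont} restricted to the appropriate angular subspace. By Lemmas~\ref{lemL1rad} and \ref{lem:2ndl01}, we have $\Lambda_1 = \mu_1$ and $\Lambda_2 = \min\{\mu_2, \nu_1\}$, so the task reduces to establishing $\nu_1(\alpha) \leq \mu_2(\alpha)$ on $(-\tau, 0)$. I would first verify the endpoints: at $\alpha = 0$, Chasman's analysis of the free plate ball \cite{chasmanball} yields $\nu_1(0) = \omega_2 < \mu_2(0)$; at $\alpha = -\tau$, the boundary-condition computations at the end of the proof of Lemma~\ref{lemL1rad} exhibit $u = r Y_1(\hat\theta)$ as an eigenfunction with eigenvalue $0$, giving $\nu_1(-\tau) \leq 0$, while the parallel $\ell = 0$ computation there shows no radial eigenfunction occurs at eigenvalue $0$ unless $\alpha = 0$, giving $\mu_2(-\tau) > 0$.

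The hard part is propagating the inequality $\nu_1 \leq \mu_2$ across the whole interval. Assuming for contradiction that $\nu_1(\alpha^*) > \mu_2(\alpha^*)$ for some $\alpha^* \in (-\tau, 0)$, continuity and the endpoint data produce a crossing $\tilde\alpha \in (-\tau, 0)$ at which $\mu_2(\tilde\alpha) = \nu_1(\tilde\alpha) > 0$. At $\tilde\alpha$, both determinantal conditions $W_0(\tilde a) = 0$ and $W_1(\tilde a) = 0$ from Section~\ref{sect:poseval} must hold simultaneously for the same positive $\tilde a$ with $\tilde b = \sqrt{\tilde a^2 + \tau}$. The most technical step is then to rule this out by direct Bessel function manipulation: solving each $W_\ell = 0$ for $\alpha$ expresses it as a rational combination of $j_\ell(\tilde a)$, $i_\ell(\tilde b)$, and their first two derivatives, and equating the two resulting expressions should, via the recurrences \eqref{jrecur1}--\eqref{irecur3} and the monotonicity results in Lemma~\ref{lem:besselprops}, force $\alpha \in \{-\tau, 0\}$, contradicting $\tilde\alpha \in (-\tau, 0)$.
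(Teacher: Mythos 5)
Your positivity argument is correct and is exactly the paper's: combine Lemma~\ref{lem:2ndl01} (nonvanishing on $(-\tau,0)$) with Proposition~\ref{Prop:Cont} and the sign at $\alpha=0$.

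For the identification of the angular index, however, your route diverges from the paper's and contains a genuine gap. The paper does not use a continuation-in-$\alpha$/crossing argument at all; it fixes an arbitrary $\alpha\in(-\tau,0)$ and analyzes the determinants $W_0$ and $W_1$ directly on the interval $(0,p_{1,1}]$. Concretely, it shows by a sign inspection of $\Mrad$ and $\Vrad$ applied to $j_0(ar)$ and $i_0(br)$ that $W_0(a)>0$ for all $a\in(0,p_{1,1}]$, so the first positive radial eigenvalue requires $a>p_{1,1}$; and it shows $W_1(p_{1,1})>0$ while a series expansion gives $W_1(a)<0$ as $a\to 0^+$ (using $\tau+\alpha>0$), so $W_1$ has a zero in $(0,p_{1,1})$. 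Since $\Lambda=a^2(a^2+\tau)$ is increasing in $a$, this gives $\nu_1<\mu_2$ outright, with no appeal to the endpoints $\alpha=0,-\tau$ or to deformation. (It also yields the localization $a\in(0,p_{1,1})$, which Theorem~\ref{thm:groundstate} and the later trial-function construction rely on; your argument, even if completed, would not produce this.)

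The gap in your version is the final step. Your reduction to a crossing $\tilde\alpha$ at which $W_0(\tilde a)=W_1(\tilde a)=0$ for a common $\tilde a$ is structurally sound, but the claim that this simultaneous vanishing can be ruled out ``via the recurrences and the monotonicity results'' is precisely the entire technical content of the lemma, and you have not carried it out. It is not a routine manipulation: solving $W_0(\tilde a)=0$ and $W_1(\tilde a)=0$ each for $\alpha$ and equating yields a transcendental identity in $\tilde a$ involving $j_0,j_1,i_0,i_1$ and their derivatives, valid a priori for $\tilde a$ anywhere in $(0,\infty)$ (your crossing value $\tilde a$ is not confined to $(0,p_{1,1}]$, where Lemma~\ref{lem:besselprops} gives sign control). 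Nothing in the paper establishes, or suggests an easy proof of, the nonexistence of such a common zero in general; indeed the paper deliberately avoids needing it. As written, the proposal defers the lemma's essential difficulty to an unverified assertion, so it is incomplete.
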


\begin{proof}

From Lemma \ref{lem:2ndl01}, $\Lambda_2(\mathbb{B}; \tau, \alpha)$ is nonzero. By Proposition \ref{Prop:Cont}, $\Lambda_2(\mathbb{B}; \tau, \alpha)$ is continuous in $\alpha$, and since $\Lambda_2(\mathbb{B}; \tau, 0)>0$, it must be the case that $\Lambda_2(\mathbb{B}; \tau, \alpha)>0$ for $\alpha \in (-\tau,0)$.

The work in Section \ref{sect:poseval} shows that eigenfunctions for $\Lambda_2(\mathbb{B}; \tau, \alpha)$ take the form $u=R(r)Y_{\ell}(\hat \theta)$, where
\[
R(r)=Aj_\ell(ar)+Bi_{\ell}(br).
\]
The positive constants $a$ and $b$ are determined by the equations $\tau=b^2-a^2$ and $\Lambda=a^2b^2$.

We examine the radial boundary operators applied to the ultraspherical Bessel functions:
\begin{align*}
\Mrad j_\ell(ar)\big|_{r=1}&=a^2j_\ell''(a),\\
\Mrad i_\ell(br)\big|_{r=1}&=b^2i_\ell''(b),\\
\Vrad j_\ell(ar)\big|_{r=1}&=b^2aj_\ell'(a)+F_\ell(aj_\ell'(a)-j_\ell(a))+\alpha j_\ell(a),\\
\Vrad i_\ell(br)\big|_{r=1}&=-a^2bi_\ell'(b)+F_\ell(bi_\ell'(b)-i_\ell(b))+\alpha i_\ell(b).
\end{align*}
Recall that $p_{1,1}$ denotes the first positive zero of $j_1'(z)$. In light of Lemma \ref{lem:2ndl01}, we show that $W_1(a)$, defined in \eqref{eq:Wldef}, has a zero in $(0,p_{1,1})$, but $W_0(a)$ is positive on $(0,p_{1,1}]$.

When $\ell=0$, we have $F_0=0$. Using the recurrence relations \eqref{jrecur2} and \eqref{irecur2}, 
\begin{align}
\Mrad j_0(ar)\big|_{r=1}&=a^2j_0''(a)=-a^2j_1'(a), \label{eq:Mradj0}\\
\Mrad i_0(br)\big|_{r=1}&=b^2i_0''(b)=b^2i_1'(b), \label{eq:Mradi0}\\
\Vrad j_0(ar)\big|_{r=1}&=b^2aj_0'(a)+\alpha j_0(a)=-b^2aj_1(a)+\alpha j_0(a), \label{eq:Vradj0}\\
\Vrad i_0(br)\big|_{r=1}&=-a^2bi_0'(b)+\alpha i_0(b)=-a^2bi_1(b)+\alpha i_0(b). \label{eq:Vradi0}
\end{align}
Recall from Lemma~\ref{lem:besselprops} that $j_1$ is positive, $j_1' $ is nonnegative, and $j_1''$ is negative on $(0, p_{1,1}]$. By \eqref{jrecur3}, $j_0(z)=j_1'(z)+((d-1)/z)j_1(z)$, and so $j_0$ is positive on $(0,p_{1,1}$] as well. The modified Bessel functions $i_{\ell}$ and their derivatives are all positive on $(0,\infty)$. Recalling $\alpha<0$, we observe that \eqref{eq:Mradj0} is nonpositive, \eqref{eq:Mradi0} is positive, and \eqref{eq:Vradj0} and \eqref{eq:Vradi0} are negative on $(0,p_{1,1}]$. Thus,
\[
W_0(a)=\Mrad j_0(ar)\Vrad i_0(br)-\Mrad i_0(br)\Vrad j_0(ar) \big|_{r=1}
\]
is positive on this interval, and so the first positive eigenvalue corresponding to $\ell=0$ must occur with some $a>p_{1,1}$.

When $\ell=1$, we have
\begin{align}
\Mrad j_1(ar)\big|_{r=1}&=a^2j_1''(a), \label{eq:Mradj1}\\
\Mrad i_1(br)\big|_{r=1}&=b^2i_1''(b),\label{eq:Mradi1}\\
\Vrad j_1(ar)\big|_{r=1}&=b^2aj_1'(a)+(d-1)(aj_1'(a)-j_1(a))+\alpha j_1(a)\nonumber \\
&=(b^2+d-1)aj_1'(a)+(\alpha-(d-1))j_1(a),\label{eq:Vradj1}\\
\Vrad i_1(br)\big|_{r=1}&=-a^2bi_1'(b)+(d-1)(bi_1'(b)-i_1(b))+\alpha i_1(b)\nonumber \\
&=(d-1-a^2)bi_1'(b)+(\alpha-(d-1))i_1(b). \label{eq:Vradi1}
\end{align}

First, we evaluate $W_1(a)$ at $a=p_{1,1}$. In this case, \eqref{eq:Vradj1} simplifies to $(\alpha-(d-1))j_1(p_{1,1})<0$. Since $d\geq 2$, Lemma \ref{lem:LS} gives $p_{1,1}^2>d$, so $d-1-p_{1,1}^2$ is negative. Since the $b$-value corresponding to $a=p_{1,1}$ is positive, $i_1(b), i_1'(b)$, and $i_1''(b)$ are all positive. Thus \eqref{eq:Mradi1} is positive and \eqref{eq:Vradi1} is negative. Since $j_1''(p_{1,1})<0$, \eqref{eq:Mradj1} is negative. Thus
\[
W_1(p_{1,1})=\Mrad j_1(p_{1,1}r)\Vrad i_1(br)-\Mrad i_1(br)\Vrad j_1(p_{1,1}r) \big|_{r=1}>0.
\]

Next, we examine $W_1(a)$ as $a\to0^+$. We note that by the series expansion for $j_1$,
\[
j_1(a)=c_0 a-c_1a^3+\BigO(a^5), \qquad \textup{as }a\to0^+.
\]
Using \eqref{eq:Mradj1}, \eqref{eq:Vradj1}, and $b^2=a^2+\tau$, we have
\begin{align*}
\Mrad j_1(ar)\big|_{r=1}&=-6c_1a^3+\BigO(a^5),\\
\Vrad j_1(ar)\big|_{r=1}&=(\tau+\alpha)c_0 a+(c_0-(3\tau+\alpha+2(d-1))c_1)a^3+\BigO(a^5).
\end{align*}
The derivatives of $i_1$, and the corresponding radial boundary values, are a little more delicate to compute:
\begin{align*}
i_1(b)&=b\sum_{k=0}^\infty c_k (a^2+\tau)^k=b\left(c_0+\sum_{k=1}^\infty c_k \tau^k+a^2\sum_{k=1}^\infty kc_k \tau^{k-1}+\BigO(a^4)\right)\\
bi_1'(b)&=b\sum_{k=0}^\infty c_k(2k+1)(a^2+\tau)^k=b\left(c_0+\sum_{k=1}^\infty c_k(2k+1) \tau^k+a^2\sum_{k=1}^\infty k(2k+1)c_k \tau^{k-1}+\BigO(a^4)\right)\\
b^2i_1''(b)&=b\sum_{k=1}^\infty c_k(2k+1)(2k)(a^2+\tau)^k=b\left(\sum_{k=1}^\infty c_k(2k+1)(2k) \tau^k+a^2\sum_{k=1}^\infty 2k^2(2k+1)c_k \tau^{k-1}+\BigO(a^4)\right)\\
\Vrad i_1(br)\big|_{r=1}&=b\left(\sum_{k=0}^\infty c_k\tau^k(\alpha+2k(d-1))
+\BigO(a^2)\right).\\
\end{align*}
Combining the above computations,
\begin{align*}
W_1(a)&=b\left(-6c_1a^3+\BigO(a^5)\right)\left(\sum_{k=0}^\infty c_k\tau^k(\alpha+2k(d-1))
+\BigO(a^2)\right)\\
&\qquad-b\left((\tau+\alpha)c_0 a+\BigO(a^3)\right)\left(\sum_{k=1}^\infty c_k(2k+1)(2k) \tau^k+a^2\sum_{k=1}^\infty 2k^2(2k+1)c_k \tau^{k-1}+\BigO(a^4)\right)\\
&=ab\left(-c_0(\tau+\alpha)\sum_{k=1}^\infty c_k(2k+1)(2k)\tau^k+\BigO(a^2)\right).
\end{align*}
Recall the coefficients $c_k$ are all positive, $\tau+\alpha>0$, and $\tau>0$. Thus we see $W_1(a)<0$ as $a\to0^+$.  Since $W_1(a)$ is a continuous function, it must have a zero somewhere in the interval $(0,p_{1,1})$.
\end{proof}

The following theorem summarizes the results of this section.

\begin{thm}\label{thm:groundstate} For $\alpha\in[-\tau,0)$, the lowest eigenvalue $\Lambda_1(\mathbb{B}; \tau,\alpha)$ of problem \eqref{eqn:robinplateball} is negative and simple. Moreover, eigenfunctions corresponding to $\Lambda_1(\mathbb{B}; \tau,\alpha)$ are radial. The second eigenvalue $\Lambda_2(\mathbb{B}; \tau,\alpha)$ is nonnegative and equals zero only when $\alpha=-\tau$. For $\alpha \in (-\tau,0)$, corresponding eigenfunctions take the form $u_2=R(r) Y_1(\hat\theta)$, which has simple angular dependence. Furthermore, we may rescale so that
\[
R(r)=j_1(ar)+\gamma i_1(br),\qquad a\in(0,p_{1,1}), b^2=a^2+\tau, \Lambda_2(\mathbb{B}; \tau,\alpha)=a^2b^2.
\]
\end{thm}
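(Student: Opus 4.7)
The plan is to assemble the theorem directly from the four lemmas established above (Lemmas \ref{lem:L2sinc}, \ref{lemL1rad}, \ref{lem:2ndl01}, \ref{lem:Wl01}) together with the eigenfunction descriptions in Section \ref{sect:UBall}. Most of the work has already been done; what remains is bookkeeping, handling the endpoint $\alpha=-\tau$, and deducing simplicity of $\Lambda_1$.

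For $\Lambda_1$, I would first note that the constant function is an eigenfunction of the free plate ($\alpha=0$) with eigenvalue zero, so $\Lambda_1(\mathbb{B};\tau,0)=0$. Combined with the strict monotonicity in $\alpha$ from Lemma \ref{lem:L2sinc}, this yields $\Lambda_1(\mathbb{B};\tau,\alpha)<0$ for all $\alpha\in[-\tau,0)$. Lemma \ref{lemL1rad} says that every $\Lambda_1$ eigenfunction corresponds to $\ell=0$, hence is radial. For simplicity I would invoke the negative-eigenvalue analysis of Section 4.3: the radial eigenfunctions lie in the span of $i_0(ar)$ and $i_0(br)$, with $\tau=a^2+b^2$ and $\Lambda_1=-a^2b^2$. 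The two radial boundary conditions $\Mrad R=\Vrad R=0$ at $r=1$ form a $2\times 2$ linear system; its determinant vanishes (since $\Lambda_1$ is an eigenvalue), but the system cannot have rank zero because, for example, $\Mrad i_0(ar)|_{r=1}=a^2 i_0''(a)>0$ (modified Bessel functions and their derivatives are strictly positive on $(0,\infty)$). Hence the kernel is exactly one-dimensional, so $\Lambda_1$ is simple.

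For $\Lambda_2$ with $\alpha\in(-\tau,0)$, Lemma \ref{lem:Wl01} already provides positivity and the $\ell=1$ structure. The positive-eigenvalue analysis of Section \ref{sect:poseval} then forces the eigenfunction to have radial part $R(r)=Aj_1(ar)+Bi_1(br)$ with $\tau=b^2-a^2$ and $\Lambda_2=a^2b^2$. I would rule out $A=0$ directly: if $A=0$, then $\Mrad R|_{r=1}=Bb^2 i_1''(b)=0$ forces $B=0$, a contradiction. Normalizing $A=1$ and setting $\gamma=B$ produces the claimed form. The bound $a\in(0,p_{1,1})$ is exactly what the proof of Lemma \ref{lem:Wl01} gives: $W_1$ is negative near $0$ and positive at $p_{1,1}$, so its smallest positive zero (which in turn identifies the smallest $\ell=1$ eigenvalue) lies in $(0,p_{1,1})$.

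Finally, for the endpoint $\alpha=-\tau$, continuity from Proposition \ref{Prop:Cont} together with $\Lambda_2>0$ on $(-\tau,0)$ gives $\Lambda_2(\mathbb{B};\tau,-\tau)\geq 0$. I would obtain the reverse inequality by plugging the trial function $u=x_i$ into the Rayleigh quotient: since $D^2 x_i\equiv 0$, $|Dx_i|^2\equiv 1$, and the unit-ball integrals give $\int_{\mathbb{B}}x_i^2\,dx=|\mathbb{B}|/(d+2)$ and $\int_{\partial\mathbb{B}}x_i^2\,dS=|\mathbb{B}|$, one computes $Q[\mathbb{B};\alpha,x_i]=(d+2)(\tau+\alpha)$, which vanishes at $\alpha=-\tau$, forcing $\Lambda_2(\mathbb{B};\tau,-\tau)=0$. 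The delicate step in this whole program has really been Lemma \ref{lem:Wl01}, where the sign of $W_1(a)$ near $0$ and at $p_{1,1}$ had to be extracted from careful series expansions; granted that input, the theorem follows by assembly.
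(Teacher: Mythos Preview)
Your assembly is essentially the paper's own proof: invoke Lemma~\ref{lem:L2sinc} for $\Lambda_1<0$, Lemma~\ref{lemL1rad} for radiality, Lemma~\ref{lem:Wl01} (and its proof) for positivity of $\Lambda_2$ on $(-\tau,0)$ together with the $\ell=1$ structure and the location $a\in(0,p_{1,1})$, and then normalize $R$ using $\Mrad i_1(br)|_{r=1}=b^2i_1''(b)>0$. Your endpoint treatment at $\alpha=-\tau$ via the trial function $x_i$ is the same computation as in the paper's Lemma~\ref{lem:ULboundsball}, just done in place; note you are silently using that $x_i$ is orthogonal to the radial $\Lambda_1$-eigenfunction, which is fine.

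There is one small gap in your simplicity argument for $\Lambda_1$. The $2\times 2$ system you describe comes from the factorization $(\Delta-a^2)(\Delta-b^2)u=0$ with $a\neq b$, which is only available when $\Lambda_1\neq -\tau^2/4$; Section~4.3 explicitly sets aside the repeated-root case, and Lemma~\ref{lem:L2sinc} tells you there is \emph{at most} one $\alpha$ where $\Lambda_1=-\tau^2/4$, not that there is none in $[-\tau,0)$. So your rank argument does not cover every $\alpha$. The paper sidesteps this entirely: once you have shown $\Lambda_1(\mathbb{B};\tau,\alpha)<0$ and $\Lambda_2(\mathbb{B};\tau,\alpha)\geq 0$ on $[-\tau,0)$, simplicity of $\Lambda_1$ is immediate from $\Lambda_1<\Lambda_2$. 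You already have both ingredients in hand, so replacing your linear-algebra detour by this one-line observation closes the gap.
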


\begin{proof}
Since $\Lambda_1(\mathbb{B}; \tau, 0)=0$, Lemma \ref{lem:L2sinc} gives that $\Lambda_1(\mathbb{B}; \tau, \alpha)<0$. We have $\Lambda_2(\mathbb{B}; \tau, \alpha)\geq0$ with equality only when $\alpha=-\tau$ by the proof of Lemma \ref{lemL1rad} and Lemma \ref{lem:Wl01}. Eigenfunctions for $\Lambda_1(\mathbb{B}; \tau, \alpha)$ are radial by Lemma \ref{lemL1rad}. The claims on the form of eigenfunctions for $\Lambda_2(\mathbb{B}; \tau,\alpha)$ follow from Lemma \ref{lem:Wl01} (and its proof) and the work of Section \ref{sect:poseval}. By rescaling, we can write
\[
R(r)=j_1(ar)+\gamma i_1(br),\qquad \gamma=-\frac{a^2j_1''(a)}{b^2i_1''(b)}.
\]
The constant $\gamma$ is determined by \eqref{eq:Mrad}. We are justified in rescaling $R$ so that the coefficient of $j_{1}$ equals 1 because $M_{\textup{rad}}i_{1}>0$ when $r=1$.
 
\end{proof}

\section{Constructing the Trial Functions}
We assume throughout this section that $\tau>0$ and $\alpha\in(-\tau,0)$. By Theorem~\ref{thm:groundstate}, the first eigenvalue $\Lambda_1(\mathbb{B}; \tau, \alpha)$ corresponds to $\ell=0$ and the second eigenvalue $\Lambda_2(\mathbb{B}; \tau, \alpha)$ corresponds to $\ell=1$. Following Weinberger's approach, we use the eigenfunctions for $\Lambda_2(\mathbb{B}; \tau, \alpha)$ to construct our trial functions. Let 
\[
R(r)=j_1(ar)+\gamma i_1(br), \qquad b^2=\tau+a^2, \gamma=-a^2j_1''(a)/b^2i_1''(b),
\]
be the radial part of an eigenfunction for $\Lambda_2(\mathbb{B}; \tau, \alpha)$ as in Theorem \ref{thm:groundstate}. Here $a$ is the first nonzero root of $W_1(a)$, and by Theorem \ref{thm:groundstate}, we have $a\in(0,p_{1,1})$.

 We then define $\rho$ to be the linear extension of $R$ as follows:
\begin{equation}\label{eq:pdef}
 \rho(r)=\begin{cases} R(r) &\textup{when }0\leq r\leq 1,\\
          R'(1)(r-1)+R(1) & \textup{when }r>1.
         \end{cases}
\end{equation}

\begin{lem}\label{lem:trialprops}
The function $\rho$ defined in \eqref{eq:pdef} satisfies the following properties:
\begin{enumerate}
 \item $\rho\geq 0$, $\rho'\geq 0$, and $\rho''\leq0$ on $[0,\infty)$. 
  \item $\rho-r\rho'$ is nonnegative and increasing on $[0,\infty)$.
  \item $\alpha\rho+\tau\rho'$ is decreasing on $[0,\infty)$ and positive on $[0,1]$.
\end{enumerate}
\end{lem}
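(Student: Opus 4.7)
The plan is to verify the three properties using the explicit representation
\[
R(r) = j_1(ar) + \gamma i_1(br), \qquad \gamma = -\frac{a^2 j_1''(a)}{b^2 i_1''(b)}, \qquad b^2 = \tau + a^2, \qquad a \in (0, p_{1,1}),
\]
together with the fact that $\rho$ is linear on $[1, \infty)$ and agrees with $R$ on $[0,1]$. A preliminary remark: $\gamma > 0$, since Lemma \ref{lem:besselprops}(1) gives $j_1''(a) < 0$ on $(0, p_{1,1}]$ and $i_1''(b) > 0$ follows from the positive-term series for $i_1$.

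For Property 1, the central claim is $\rho'' \leq 0$ on $[0, \infty)$. This is automatic on $[1, \infty)$, where $\rho$ is linear. On $[0,1]$, $\rho'' = R''$ vanishes at $r = 0$ (because $j_1''(0) = i_1''(0) = 0$) and at $r = 1$ (the $\Mrad$ boundary condition, which dictated the choice of $\gamma$). Furthermore $R^{(4)}(r) = a^4 j_1^{(4)}(ar) + \gamma b^4 i_1^{(4)}(br)$ is positive on $(0,1]$: $j_1^{(4)} > 0$ on $(0, p_{1,1}]$ by Lemma \ref{lem:besselprops}(2), and $i_1^{(4)} > 0$ from its purely positive series. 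Thus $R''$ is strictly convex on $[0,1]$ with zero boundary values, forcing $R'' \leq 0$. Then $\rho'$ is nonincreasing, so $\rho'(r) \geq \rho'(1) = a j_1'(a) + \gamma b i_1'(b) > 0$ on $[0,1]$ (Lemma \ref{lem:besselprops}(3) giving $j_1'(a) > 0$), and $\rho' \equiv R'(1) > 0$ on $[1, \infty)$. Combined with $\rho(0) = 0$, this yields $\rho \geq 0$ throughout. Property 2 then follows directly: $(\rho - r\rho')' = -r\rho'' \geq 0$ by Property 1, and $\rho(0) - 0\cdot\rho'(0) = 0$. The monotonicity half of Property 3 is analogous: $(\alpha\rho + \tau\rho')' = \alpha\rho' + \tau\rho''$, and each summand is nonpositive since $\alpha < 0$, $\rho' \geq 0$, $\tau > 0$, $\rho'' \leq 0$.

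The remaining content of Property 3 is positivity of $h(r) := \alpha\rho(r) + \tau\rho'(r)$ on $[0,1]$. By the monotonicity just established, this reduces to showing $h(1) > 0$. Combining the boundary condition $\Vrad R|_{r=1} = 0$ with $R''(1) = 0$ and expanding $(\Delta_r R)'(1)$ explicitly yields the identity
\[
h(1) = R'''(1) + 3(d-1)\bigl(R(1) - R'(1)\bigr).
\]
Property 2 at $r = 1$ together with the recurrences \eqref{jrecur2}, \eqref{irecur2} gives $R(1) - R'(1) = a j_2(a) - \gamma b i_2(b) \geq 0$, so the second summand is nonnegative. For $R'''(1) = a^3 j_1'''(a) + \gamma b^3 i_1'''(b)$, I would differentiate the ultraspherical Bessel ODEs to replace $j_1''', i_1'''$ by expressions in $j_1, j_1', j_1''$ and $i_1, i_1', i_1''$, then use $R''(1) = 0$ to cancel the $(d+1)$-weighted second-derivative contributions, and finally rewrite residual $j_2, i_2$ via \eqref{jrecur1}, \eqref{irecur1}. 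The main obstacle is the strict positivity of the resulting expression on the open interval $\alpha \in (-\tau, 0)$: as $\alpha \to -\tau^+$ one has $\Lambda_2 \to 0$, hence $a \to 0^+$, $\gamma \to 0^+$, and $h(1) \to 0$, so the inequality fails to be uniform near the left endpoint. Establishing strict positivity therefore requires careful tracking of Bessel asymptotics and exploits the implicit constraint $W_1(a) = 0$ relating $\alpha, a, b, \tau$; this is where I expect the bulk of the technical work to reside.
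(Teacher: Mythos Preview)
Your treatment of Properties 1 and 2, and of the monotonicity half of Property 3, matches the paper's proof essentially line for line. Your identity $h(1)=R'''(1)+3(d-1)\bigl(R(1)-R'(1)\bigr)$ is also correct.

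The gap is that you stop short of computing $h(1)$ and then misjudge the remaining difficulty. If you carry out exactly the plan you describe---differentiate the ultraspherical ODEs to get
\[
a^3 j_1'''(a)=-(d+1)a^2 j_1''(a)-a^3 j_1'(a)-2a^2 j_1(a),\qquad
b^3 i_1'''(b)=-(d+1)b^2 i_1''(b)+b^3 i_1'(b)+2b^2 i_1(b),
\]
use $R''(1)=a^2 j_1''(a)+\gamma b^2 i_1''(b)=0$ to kill the $(d+1)$-terms, and apply \eqref{jrecur2}, \eqref{irecur2}---you obtain
\[
h(1)=a^3 j_2(a)+\gamma b^3 i_2(b)\;-3a^2 j_1(a)+3\gamma b^2 i_1(b)+3(d-1)\bigl(a j_2(a)-\gamma b i_2(b)\bigr).
\]
The last three groups cancel identically: rewriting $R''(1)=0$ via the Bessel ODEs gives $(d-1)\bigl(R(1)-R'(1)\bigr)=a^2 j_1(a)-\gamma b^2 i_1(b)$, which combined with your own formula $R(1)-R'(1)=a j_2(a)-\gamma b i_2(b)$ makes the sum vanish. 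Thus
\[
h(1)=a^3 j_2(a)+\gamma b^3 i_2(b)>0,
\]
since $j_2(a),i_2(b),\gamma>0$ for $a\in(0,p_{1,1})$. No asymptotics, no appeal to $W_1(a)=0$ beyond the two boundary conditions already used, and no delicate tracking near $\alpha=-\tau$ are needed; the expression is manifestly positive for every $a$ in the open interval, even though it tends to zero as $a\to 0^+$.

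The paper reaches the same formula by a marginally shorter route: rather than passing through $R'''(1)$, it substitutes the precomputed values $\Vrad j_1(ar)|_{r=1}$ and $\Vrad i_1(br)|_{r=1}$ directly into $\Vrad R|_{r=1}=0$, then uses the observation that $\Delta(RY_1)=(d-1)(R'-R)Y_1$ at $r=1$ (equivalent to the ODE identity above) to replace $(d-1)(R(1)-R'(1))$ by $a^2 j_1(a)-\gamma b^2 i_1(b)$. Either way one lands on $a^3 j_2(a)+\gamma b^3 i_2(b)$.
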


\begin{proof}
\begin{enumerate}
\item We first consider these functions on the interval $[0,1]$. Since $a\in(0,p_{1,1})$ and $0<a< b$, the constant $\gamma=-a^2 j_1''(a)/b^2 i_1''(b)$ satisfies $0<\gamma\leq 1$ by Lemma \ref{lem:besselprops}.  Thus, $\rho(r)=j_1(ar)+\gamma i_1(br)$ and $\rho'(r)=aj_1'(ar)+\gamma b i_1'(br)$ are nonnegative on $[0,1]$ since all terms are nonnegative by Lemma \ref{lem:besselprops}.  For $\rho''$, note that by definition of $\gamma$, we have $\rho''(1)=0$ since
\[
\rho''(r)=\frac{a^2j_1''(ar)b^2i_1''(b)-a^2j_1''(a)b^2i_1''(br)}{b^2i_1''(b)}. 
\]
Also note $\rho''(0)=0$ since the Bessel functions have second derivatives that vanish at the origin. The fourth derivative of $\rho$ satisfies
\[
\rho^{(4)}(r)=a^4j_1^{(4)}(ar)+\gamma b^4 i_1^{(4)}(br).
\]
By Lemma \ref{lem:besselprops}, $\rho^{(4)}(r)\geq 0$ on $[0,1]$, and so $\rho''(r)$ is a convex function. Since $\rho''(0)=\rho''(1)=0$, we conclude $\rho''(r)\leq 0$ on $[0,1]$.

\quad On $[1,\infty)$, note that $\rho$ is linear, so $\rho''(r)=0$. We also have $\rho'(r)=\rho'(1)$, which by our work above is positive. Thus $\rho$ is increasing on $[1,\infty)$ with $\rho(1)> 0$, so $\rho$ is nonnegative on this interval.

\item Set $f(r)=\rho(r)-r\rho'(r)$. Then $f'(r)=-r\rho''(r)$, which is nonnegative on $[0,\infty)$ by part 1 of this lemma.  We also clearly have $f(0)=0$, so $f$ is nonnegative and increasing on $[0,\infty)$.

\item Now set $f(r)=\alpha\rho(r)+\tau\rho'(r)$. Note that $f'(r)=\alpha\rho'(r)+\tau\rho''(r)\leq0$ for all $r\geq0$ by part 1, so $f$ is decreasing. So to prove nonnegativity on $[0,1]$, it suffices to show $f(1)>0$.

\quad As we saw earlier, $\rho''(1)=0$. The boundary condition $\Vrad R\big|_{r=1}=0$ gives
\[
b^2aj_1'(a)-\gamma a^2bi_1'(b)+(d-1)(\rho'(1)-\rho(1))+\alpha \rho(1)=0.
\]
Thus,
\[
\alpha \rho(1) + \tau \rho'(1)=-a^3j_1'(a)+b^3\gamma i'_1(b)+(d-1)(\rho(1)-\rho'(1)).
\]
Writing $u=RY_1$ and using $\rho''(1)=0$, a direct computation shows that $\Delta u=(d-1)(\rho'-\rho)Y_1$ when $r=1$. Thus, using the recursion relations \eqref{jrecur2} and \eqref{irecur2}, we therefore see
\begin{align*}
\alpha \rho(1)+\tau \rho'(1)&=-a^3j_1'(a)+b^3\gamma i'_1(b)+a^2j_1(a)-b^2\gamma i_1(b)\\
&=-a^3\left(\frac{1}{a}j_1(a)-j_2(a)\right)+b^3\gamma\left(\frac{1}{b}i_1(b)+i_2(b)\right)+a^2j_1(a)-b^2\gamma i_1(b)\\
&=a^3j_2(a)+b^3\gamma i_2(b).
\end{align*}
This last expression is positive by Lemma \ref{lem:besselprops}.
\end{enumerate}
\end{proof}

We are now prepared to prove our center of mass result with trial functions of the form $u=\rho(r)Y_1(\hat\theta)$.

\begin{lem}\label{lem:COM}
With $\Omega$ as in Theorem \ref{thm:mainthm}, let $v$ denote an eigenfunction for $\Lambda_1(\Omega; \tau, \alpha)$ where $\alpha \in (-\tau,0)$. With $\rho$ defined in \eqref{eq:pdef}, define $u_k(x)=\rho(r)x_k/r$ for $k=1,\ldots,d$. If $\int_{\Omega}v\,dx\neq 0$, then by translating $\Omega$ suitably, we may assume
\[
\int_{\Omega}u_kv\,dx=0
\]
for $k=1,\ldots,d$.
\end{lem}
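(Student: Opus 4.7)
The plan is to recast the $d$ conditions as a single critical-point problem in $\mathbb{R}^d$, by exploiting that the trial functions form a gradient. First I would introduce the radial potential $\Psi(x) := \int_0^{|x|} \rho(s)\,ds$; since $\rho(0)=R(0)=0$ and $\rho''(1)=0$ (the radial Navier condition $\Mrad R\big|_{r=1}=0$), $\Psi \in C^2(\mathbb{R}^d)$ with $\nabla\Psi(x)=\rho(|x|)\,x/|x|$, so that $\partial_{x_k}\Psi=u_k$.

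Parametrizing translations by $y\in\mathbb{R}^d$ and replacing $\Omega$ with $\Omega+y$ and $v$ with $v(\cdot-y)$, the desired vanishing becomes
\[
0 = \int_{\Omega+y} u_k(x)\,v(x-y)\,dx = \int_{\Omega} u_k(z+y)\,v(z)\,dz =: F_k(y).
\]
Setting $G(y) := \int_\Omega \Psi(z+y)\,v(z)\,dz$ and differentiating under the integral sign gives $\nabla G(y) = F(y)$, so the problem reduces to producing a critical point of the scalar function $G$.

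The crux, and the step I expect to be the main obstacle, is showing that $G$ is coercive up to sign. By Lemma \ref{lem:trialprops}(1), $\rho, \rho' \geq 0$, and by construction $\rho(r) = R'(1)(r-1) + R(1)$ for $r \geq 1$; crucially $R'(1) = a j_1'(a) + \gamma b\, i_1'(b) > 0$, since $a \in (0, p_{1,1})$, $\gamma > 0$, and $j_1'(a), i_1'(b) > 0$ by Lemma \ref{lem:besselprops}. Hence $\Psi(x) = \tfrac{R'(1)}{2} |x|^2 + O(|x|)$ at infinity. Since $\Omega$ is bounded, for $|y|$ large enough $|z+y| \geq 1$ for every $z \in \Omega$, and expanding $|z+y|^2 = |y|^2 + 2 z \cdot y + |z|^2$ yields
\[
G(y) = \frac{R'(1)}{2} |y|^2 \int_\Omega v\,dx + O(|y|) \qquad \text{as } |y| \to \infty.
\]
Since $\int_\Omega v\,dx \neq 0$ by hypothesis, $G(y) \to \pm\infty$ uniformly as $|y| \to \infty$, so $G$ attains a global extremum at some $y^\star \in \mathbb{R}^d$; then $F(y^\star) = \nabla G(y^\star) = 0$, and translating $\Omega$ by $y^\star$ delivers the centering.
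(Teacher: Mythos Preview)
Your proposal is correct and follows essentially the same approach as the paper's proof: both integrate $\rho$ to a radial potential, form the scalar functional $y\mapsto\int_\Omega \Psi(\text{translate})\,v$, verify quadratic growth at infinity via $R'(1)>0$ and $\int_\Omega v\neq 0$, and locate a critical point as a global extremum. The only cosmetic differences are that the paper normalizes $\int_\Omega v>0$ up front (so the extremum is a minimum) and writes the translation as $y-x$ rather than $z+y$.
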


\begin{proof}
By rescaling, we assume $\int_{\Omega}v\,dx>0$. The following argument uses ideas from the proof of Proposition 2 in \cite{FreitasLaugesen1}. Define functions
\begin{align*}
G(r)&=\int_0^{r}\rho(t)\,dt,\quad r\in [0,\infty),\\
f(y)&=\int_{\Omega}G(|y-x|)v(x)\,dx, \quad y\in \mathbb{R}^d.
\end{align*}
Since $\rho(r)$ is continuous on $[0,\infty)$, $G$ is continuous on $[0,\infty)$. The function $f$ is continuous on $\mathbb{R}^d$ since $\Omega$ is bounded. Moreover, for $|y|$ large, the formula for $\rho$ in \eqref{eq:pdef} shows that the behavior of $f(y)$ is controlled by $\frac{1}{2}R'(1)|y|^2\int_{\Omega}v(x)\,dx$, which tends to $\infty$ as $|y|\to \infty$ since $R'(1)>0$. It follows that $f$ achieves a global minimum at some $y\in \mathbb{R}^d$, and at this point, $Df(y)=0$. Written out explicitly, for $k=1, \ldots, d$, we thus have
\begin{equation}
\frac{\partial f}{\partial y_k}(y)=\int_{\Omega}\rho(|y-z|)\frac{y_k-z_k}{|y-z|}v(z)\,dz=0.
\end{equation}
Making a change of variable $z=x+y$, we see
\[
\int_{\Omega-y}\rho(r)\frac{x_k}{r}v(x+y)\,dx=0,
\]
as desired.
\end{proof}

\section{Inequalities Needed for the Monotonicity of the Rayleigh Quotient}

In this section, we prove a number of technical inequalities needed to establish the monotonicity of our Rayleigh quotient. We begin by strengthening a lower bound on Robin eigenvalues found in \cite{FreitasLaugesen1}.

\begin{lem}[Lower Bound on the Second Robin Membrane Eigenvalue]\label{lem:RobinMembraneBound} Let $\alpha\in[-1,0]$. Then the second eigenvalue for the Robin membrane on the unit ball satisfies
\[
\lambda_2(\mathbb{B}; \alpha)\geq d(1+\alpha).
\]
\end{lem}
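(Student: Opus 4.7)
The plan is to reduce the bound to an explicit Bessel function inequality on $(0, p_{1,1}]$ and verify it via an alternating-series argument.

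First, I would identify the eigenvalue structure. For $\alpha \in [-1,0] \subset [-(d+1)/d, 0]$, the analysis in \cite{FreitasLaugesen1} (via reasoning parallel to Section 5 of the present paper) shows that $\lambda_2(\mathbb{B};\alpha)$ corresponds to the $\ell = 1$ angular mode. The radial part of an eigenfunction is then $R(r) = j_1(zr)$ with $z = \sqrt{\lambda_2}$, and the Robin boundary condition $\alpha R(1) + R'(1) = 0$ reads $\alpha j_1(z) + z j_1'(z) = 0$. As $\alpha$ traverses $[-1,0]$, $z$ traverses $[0, p_{1,1}]$: at $\alpha = 0$ we recover Neumann with $z = p_{1,1}$, while at $\alpha = -1$ the harmonic eigenfunction $x_k$ gives $z = 0$ and $\lambda_2 = 0$, matching $d(1+\alpha) = 0$.

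Second, I would reformulate the inequality. Applying the recurrence \eqref{jrecur2}, $z j_1'(z) = j_1(z) - z j_2(z)$, the boundary condition becomes $(1+\alpha) j_1(z) = z j_2(z)$. Since $j_1(z), j_2(z) > 0$ on $(0, p_{1,1}]$ (Lemma~\ref{lem:besselprops}), the target bound $\lambda_2 \geq d(1 + \alpha)$ is equivalent to
\[
h(z) := z j_1(z) - d j_2(z) \geq 0 \qquad \text{on } [0, p_{1,1}].
\]

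Third, I would analyze $h$ via its power series. The identity $c_{d,2}(k) = c_{d,1}(k)/(2k+d+2)$ follows at once from the definition of $c_{d,\ell}(k)$, and yields
\[
h(z) = \sum_{k=0}^{\infty}(-1)^k \tilde{c}_k\, z^{2k+2}, \qquad \tilde{c}_k := \frac{2(k+1)}{2k+d+2}\, c_{d,1}(k) > 0.
\]
A direct calculation gives $\tilde{c}_k / \tilde{c}_{k+1} = 2(k+1)^2(2k+d+4)/(k+2)$, which is minimized at $k=0$ with value $d+4$. Since $z \leq p_{1,1}$ and $p_{1,1}^2 < d+2 < d+4$ by Lemma~\ref{lem:LS}, the absolute values $\tilde{c}_k z^{2k+2}$ are monotonically decreasing in $k$. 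Leibniz's alternating-series criterion then yields
\[
h(z) \geq \tilde{c}_0 z^2 - \tilde{c}_1 z^4 = \tilde{c}_1 z^2 (d+4 - z^2) \geq 0,
\]
with strict inequality for $z \in (0, p_{1,1}]$.

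The main obstacle is the first step: rigorously establishing that $\lambda_2(\mathbb{B};\alpha)$ corresponds to the $\ell = 1$ mode with $z \in [0, p_{1,1}]$ throughout $\alpha \in [-1,0]$. This requires continuity and monotonicity of eigenvalues in $\alpha$ together with a comparison ruling out higher $\ell$ modes, analogous to what the authors establish for the plate problem in Section 5. Once this structural statement is in hand, the remainder of the argument is the essentially computational alternating-series verification outlined above.
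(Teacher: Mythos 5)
Your argument is correct, but it takes a genuinely different route from the paper's. Both proofs lean on the same structural input from Propositions 4 and 5 of \cite{FreitasLaugesen1} (that the first two Robin membrane eigenfunctions on the ball have the forms $R_1(r)$ and $R_2(r)Y_1(\hat\theta)$), but from there the paper avoids the secular equation entirely: for $\alpha\in[-1,0]$ it writes the Robin Rayleigh quotient as the convex combination $(\alpha+1)Q_0[u]+(-\alpha)Q_{-1}[u]$, restricts to trial functions $u=RY_1$, and bounds the infimum of the sum below by the sum of the infima, giving $\lambda_2(\mathbb{B};\alpha)\geq(1+\alpha)\lambda_2(\mathbb{B};0)-\alpha\,\lambda_2(\mathbb{B};-1)=(1+\alpha)p_{1,1}^2>d(1+\alpha)$, using only $\lambda_2(\mathbb{B};-1)=0$ and $p_{1,1}^2>d$ from Lemma~\ref{lem:LS}. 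You instead pin down $\lambda_2=z^2$ through the boundary condition $(1+\alpha)j_1(z)=zj_2(z)$ and reduce the claim to the Bessel inequality $zj_1(z)\geq dj_2(z)$ on $[0,p_{1,1}]$, which your alternating-series estimate verifies correctly (the coefficient identity $c_{d,2}(k)=c_{d,1}(k)/(2k+d+2)$, the ratio $\tilde c_k/\tilde c_{k+1}=2(k+1)^2(2k+d+4)/(k+2)\geq d+4$, and $p_{1,1}^2<d+2$ all check out). The trade-offs: the paper's interpolation is shorter, needs less structural information (only the angular form of the eigenfunction and the two endpoint eigenvalues, not the location of the root $z$ in $[0,p_{1,1}]$ or the monotonicity in $\alpha$ that your first step requires), and actually yields the stronger bound $(1+\alpha)p_{1,1}^2$; your computation is more explicit and self-contained once the structure is granted, and the fact you flag as the main obstacle can be closed with part 4 of Lemma~\ref{lem:besselprops}, since $zj_1'(z)/j_1(z)$ decreases from $1$ to $0$ on $(0,p_{1,1}]$ and hence the equation $zj_1'(z)/j_1(z)=-\alpha$ has a unique root there for each $\alpha\in(-1,0)$.
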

\begin{proof} By Propositions $4$ and $5$ of \cite{FreitasLaugesen1}, the eigenfunctions associated with the Robin membrane eigenvalues $\lambda_1(\mathbb{B}; \alpha)$ and $\lambda_2(\mathbb{B}; \alpha)$ can be written in the form
\[
u_1(r,\hat\theta)=R_1(r) \quad\text{and} \quad u_2(r,\hat\theta)=R_2(r)Y_1(\hat\theta).
\]
In particular, $Y_1=C x_k/r$ for some rectangular coordinate $x_k$ and constant $C$. Then all $H^1(\BB)$ functions of the form $R(r)Y_1(\hat\theta)$ are orthogonal to $u_1$, and hence are valid trial functions for the variational characterization of $\lambda_2(\mathbb{B}; \alpha)$:
\begin{align}
\lambda_2(\mathbb{B}; \alpha)&=\inf_{\substack{u\in H^1(\mathbb{B})\\ u\perp u_1}}\frac{\int_{\mathbb B} |Du|^2\,dx+\alpha \int_{\partial \mathbb B}u^2\,dS}{\int_{\mathbb B} u^2\,dx} \nonumber \\
&= \inf_{\substack{u\in H^1(\mathbb{B})\\ u=RY_1}}(\alpha+1)\frac{\int_{\mathbb B} |Du|^2\,dx}{\int_{\mathbb B} u^2\,dx}-\alpha \left( \frac{\int_{\mathbb B} |Du|^2\,dx -  \int_{\partial \mathbb B}u^2\,dS}{\int_{\mathbb B} u^2\,dx} \right) \nonumber \\
&\geq (\alpha+1)\lambda_2(\mathbb{B}; 0) - \alpha \lambda_2(\mathbb{B}; -1). \label{ineq:lbm2}
\end{align}
By Proposition 5 of \cite{FreitasLaugesen1}, we have $\lambda_2(\mathbb{B}; -1)=0$, and by definition, $\lambda_2(\mathbb{B}; 0)=\mu_2(\mathbb{B})$, the second Neumann membrane eigenvalue of the unit ball. But in all dimensions $d\geq 2$, we have $\mu_2(\mathbb{B})=p_{1,1}^2>d$ by Lemma \ref{lem:LS}. Thus \eqref{ineq:lbm2} gives the desired inequality
\[
\lambda_2(\mathbb{B}; \alpha)\geq \mu_2(\mathbb{B})(\alpha+1)\geq d(1+\alpha).
\]
\end{proof}

We now use this lemma to establish bounds on $\Lambda_2(\mathbb{B}; \tau, \alpha)$.

\begin{lem}\label{lem:ULboundsball} Let $\alpha \in [-\tau,0]$. Then the second Robin eigenvalue $\Lambda_2(\mathbb{B}; \tau, \alpha)$ satisfies the bounds
\[
d(\tau+\alpha)\leq \Lambda_2(\mathbb{B}; \tau,\alpha)\leq (d+2)(\tau+\alpha).
\]
\end{lem}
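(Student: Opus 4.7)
The plan is to establish the two bounds separately by picking judicious trial functions for the variational characterizations of $\Lambda_2(\mathbb{B};\tau,\alpha)$ and $\lambda_2(\mathbb{B};\alpha/\tau)$.

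For the upper bound $\Lambda_2(\mathbb{B};\tau,\alpha)\leq (d+2)(\tau+\alpha)$, I would use the linear function $u(x)=x_k$ as a trial function. By Theorem~\ref{thm:groundstate}, $\Lambda_1(\mathbb{B};\tau,\alpha)$ is simple with a radial eigenfunction $u_1$, and since $x_k$ is odd in the $k$th coordinate, $\int_{\mathbb{B}}x_k\,u_1\,dx=0$. Thus $x_k$ is an admissible trial function in the min–max characterization of $\Lambda_2$. Since $D^2 x_k\equiv 0$, $|Dx_k|^2\equiv 1$, and the standard identities $\int_{\mathbb{B}}x_k^2\,dx=|\mathbb{B}|/(d+2)$ and $\int_{\partial\mathbb{B}}x_k^2\,dS=|\mathbb{B}|$ hold by symmetry, the Rayleigh quotient evaluates to exactly
\[
Q[\mathbb{B};\alpha,x_k]=\frac{\tau|\mathbb{B}|+\alpha|\mathbb{B}|}{|\mathbb{B}|/(d+2)}=(d+2)(\tau+\alpha),
\]
which gives the desired upper bound.

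For the lower bound $d(\tau+\alpha)\leq\Lambda_2(\mathbb{B};\tau,\alpha)$, I would let $u_2$ denote an eigenfunction for $\Lambda_2(\mathbb{B};\tau,\alpha)$. By Theorem~\ref{thm:groundstate}, $u_2=R(r)Y_1(\hat\theta)$ with simple angular dependence. Since the first Robin membrane eigenfunction on $\mathbb{B}$ is purely radial (as noted in the proof of Lemma~\ref{lem:RobinMembraneBound}), the function $u_2$ is $L^2(\mathbb{B})$-orthogonal to it and hence is admissible for the variational characterization of $\lambda_2(\mathbb{B};\alpha/\tau)$. Noting that $\alpha/\tau\in[-1,0]$ precisely when $\alpha\in[-\tau,0]$, I would drop the nonnegative term $\int_{\mathbb{B}}|D^2 u_2|^2\,dx$ from the plate Rayleigh quotient, factor out $\tau>0$, and invoke Lemma~\ref{lem:RobinMembraneBound}:
\[
\Lambda_2(\mathbb{B};\tau,\alpha)\geq \tau\cdot\frac{\int_{\mathbb{B}}|Du_2|^2\,dx+(\alpha/\tau)\int_{\partial\mathbb{B}}u_2^2\,dS}{\int_{\mathbb{B}}u_2^2\,dx}\geq \tau\,\lambda_2(\mathbb{B};\alpha/\tau)\geq \tau\cdot d\!\left(1+\tfrac{\alpha}{\tau}\right)=d(\tau+\alpha).
\]

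There is no significant obstacle: both bounds follow immediately from the variational principles, and the necessary orthogonality comes from the fact, already established in Theorem~\ref{thm:groundstate}, that the ground states of both the plate and membrane problems on the ball are radial while the second plate eigenfunction carries the $Y_1$ factor. The only care needed is to verify that the range $\alpha\in[-\tau,0]$ translates to the range of Robin parameters for which Lemma~\ref{lem:RobinMembraneBound} applies; this is a direct dimensional check.
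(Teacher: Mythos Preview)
Your proposal is correct and follows essentially the same approach as the paper: linear trial functions $x_k$ for the upper bound, and comparison with the Robin membrane problem via Lemma~\ref{lem:RobinMembraneBound} for the lower bound (the paper phrases the lower bound as an infimum over all $RY_1$ trial functions rather than using the specific eigenfunction $u_2$, but this is cosmetic). The one point you gloss over is that Theorem~\ref{thm:groundstate} only guarantees the radial ground state and the $Y_1$-form of $u_2$ for $\alpha\in(-\tau,0)$; the paper handles the endpoints $\alpha=-\tau,0$ by continuity of $\Lambda_2$ in $\alpha$ (Proposition~\ref{Prop:Cont}), and you should do the same.
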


\begin{proof} First say $\alpha \in (-\tau,0)$. Since eigenfunctions for $\Lambda_1(\mathbb{B}; \tau, \alpha)$ are radial by Theorem \ref{thm:groundstate}, the coordinate functions $x_k$ are valid trial functions for $\Lambda_2(\mathbb{B}; \tau, \alpha)$ for $k=1, \ldots, d$. Thus the variational characterization gives us
\[
\Lambda_2(\mathbb{B}; \tau,\alpha)\leq \frac{\int_{\mathbb{B}} \left(|D^2x_k|^2+\tau|Dx_k|^2\right)\,dx+\alpha\int_{\partial \mathbb{B}} |x_k|^2\,dS}{\int_{\mathbb{B}} |x_k|^2\,dx}.
\]
Note that $|Dx_k|=1$ and $D^2x_k=0$. Thus, we can rewrite the above as
\[
\Lambda_2(\mathbb{B}; \tau,\alpha)\int_{\mathbb{B}} |x_k|^2\,dx\leq \tau|\mathbb{B}|+\alpha\int_{\partial\mathbb{B}} |x_k|^2\,dS.
\]
Summing over $k=1,\dots,d$ gives
\[
\Lambda_2(\mathbb{B}; \tau,\alpha)\int_{\mathbb{B}}|x|^2\,dx\leq \tau d|\mathbb{B}|+\alpha\int_{\partial \mathbb{B}} |x|^2\,dS.
\]
We also have
\[
\int_{\mathbb{B}} |x|^2\,dx=\int_{\mathbb{B}} r^2\,dx=\int_0^1r^{d+1}\,dr\int_{\partial \mathbb{B}}1\,dS=\frac{|\partial\BB|}{d+2}
\]
and
\[
|\BB|=\int_\BB 1\,dx=\int_0^1r^{d-1}\,dr\int_{\partial \mathbb{B}}1\,dS=\frac{|\partial\BB|}{d}.
\]
Thus we have
\[
\Lambda_2(\mathbb{B}; \tau, \alpha)\leq \tau \frac{d |\BB|}{(d+2)^{-1}|\partial\BB|}+\alpha\frac{|\partial\BB|}{(d+2)^{-1}|\partial\BB|}=(d+2)(\tau+\alpha).
\]

For the lower bound, we use the variational characterization for $\lambda_2(\mathbb{B}; \alpha/\tau)$, noting
\begin{align*}
\tau \lambda_2(\mathbb{B}; \alpha/\tau)&=\tau\inf_{\substack{u\in H^1(\mathbb{B})\\ u=RY_1}}\frac{\int_{\mathbb{B}}|Du|^2\,dx+\frac{\alpha}{\tau}\int_{\partial \mathbb{B}}u^2\,dS}{\int_{\mathbb{B}} u^2\,dx}\\
&\leq \inf_{\substack{u\in H^1(\mathbb{B})\\ u=RY_1}}\frac{\int_{\mathbb{B}}\left(|D^2u|^2+\tau|Du|^2\right)\,dx+\alpha \int_{\partial \mathbb{B}}u^2\,dS}{\int_{\mathbb{B}} u^2\,dx}\\
&\leq \inf_{\substack{u\in H^2(\mathbb{B})\\ u=RY_1}}\frac{\int_{\mathbb{B}}\left(|D^2u|^2+\tau|Du|^2\right)\,dx+\alpha \int_{\partial \mathbb{B}}u^2\,dS}{\int_{\mathbb{B}} u^2\,dx}\\
&=\Lambda_2(\mathbb{B}; \tau,\alpha).
\end{align*}
We note that $\alpha/\tau\in[-1,0]$ and apply the bound on $\lambda_2(\mathbb{B}; \alpha/\tau)$ from Lemma~\ref{lem:RobinMembraneBound} to complete the proof for $\alpha \in (-\tau,0)$. The general result follows from the continuity of $\Lambda_2(\mathbb{B}; \tau, \alpha)$ in $\alpha$ courtesy of Proposition \ref{Prop:Cont}. 
\end{proof}

\begin{remark}\label{rmk:ezero}
Taking $\alpha=-\tau$ in the above lemma gives that $\Lambda_2(\mathbb{B}; \tau, -\tau)=0$. Paired with Theorem \ref{thm:groundstate}, we see that for $\alpha \in [-\tau,0)$, $\Lambda_2(\mathbb{B}; \tau, \alpha)=0$ if and only if $\alpha=-\tau$.
\end{remark}

Our next lemma gives bounds on both $\tau$ and $\tau+\alpha$ in terms of the dimension $d$ and $a$. This result is essentially an immediate consequence of the previous lemma.
\begin{lem}\label{lem:boundsatb} Suppose $\alpha\in (-\tau,0)$, and $a,b$ are given as in Theorem \ref{thm:groundstate} by $a^2b^2=\Lambda_2(\mathbb{B}; \tau, \alpha)$, $b^2=a^2+\tau$. Then we have the following bounds, with the lower bounds valid for all cases under consideration, and the upper bounds valid only when $a^2<d$:
\[
\frac{a^2(a^2-\alpha)}{d+2-a^2} \leq \tau+\alpha \leq \frac{a^2(a^2-\alpha)}{d-a^2},
\]
\[
\frac{a^4-(d+2)\alpha}{d+2-a^2} \leq \tau \leq \frac{a^4-d\alpha}{d-a^2},
\]
\[
\frac{(d+2)(a^2-\alpha)}{d+2-a^2} \leq b^2\leq  \frac{d(a^2-\alpha)}{d-a^2}.
\]
\end{lem}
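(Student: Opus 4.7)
The plan is to treat the lemma as a direct algebraic consequence of Lemma \ref{lem:ULboundsball}. From Theorem \ref{thm:groundstate} we have $\Lambda_2(\mathbb{B};\tau,\alpha)=a^2 b^2=a^2(a^2+\tau)$, so substituting this into the two-sided bound $d(\tau+\alpha)\le \Lambda_2(\mathbb{B};\tau,\alpha)\le(d+2)(\tau+\alpha)$ gives the single chain
\[
d(\tau+\alpha)\;\le\; a^2(a^2+\tau)\;\le\;(d+2)(\tau+\alpha),
\]
from which all six desired inequalities fall out by rearrangement.

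First I would derive the bounds on $\tau+\alpha$. Writing $\tau=(\tau+\alpha)-\alpha$ in the middle expression, the upper half of the chain becomes $a^2(a^2-\alpha)+a^2(\tau+\alpha)\le(d+2)(\tau+\alpha)$, i.e.\ $a^2(a^2-\alpha)\le(d+2-a^2)(\tau+\alpha)$; since Lemma \ref{lem:LS} gives $a^2<p_{1,1}^2<d+2$, the coefficient $d+2-a^2$ is strictly positive, so we divide to obtain the lower bound $\tau+\alpha\ge a^2(a^2-\alpha)/(d+2-a^2)$. The same manipulation applied to the lower half yields $(d-a^2)(\tau+\alpha)\le a^2(a^2-\alpha)$, which divides to give the matching upper bound only when $a^2<d$ — exactly the restriction stated in the lemma.

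Next I would translate these into bounds on $\tau$ by subtracting $\alpha$ from each side and simplifying. For the lower bound,
\[
\tau\;\ge\;\frac{a^2(a^2-\alpha)}{d+2-a^2}-\alpha\;=\;\frac{a^2(a^2-\alpha)-\alpha(d+2-a^2)}{d+2-a^2}\;=\;\frac{a^4-(d+2)\alpha}{d+2-a^2},
\]
and the analogous computation with $d$ in place of $d+2$ gives the upper bound under $a^2<d$. For the $b^2$ bounds I would use $b^2=a^2+\tau$: adding $a^2$ to the lower $\tau$-bound produces $(a^2(d+2-a^2)+a^4-(d+2)\alpha)/(d+2-a^2)=(d+2)(a^2-\alpha)/(d+2-a^2)$, and the analogous identity with $d$ in place of $d+2$ produces the upper bound.

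There is no real obstacle here; the only care required is sign tracking and the observation that the hypothesis $a^2<d$ is exactly what is needed to divide by $d-a^2$ without flipping inequalities, whereas $d+2-a^2>0$ is automatic from $a\in(0,p_{1,1})$. I would also briefly remark at the start that, although Lemma \ref{lem:ULboundsball} is stated for $\alpha\in[-\tau,0]$, the hypothesis $\alpha\in(-\tau,0)$ here places us safely in that range and also guarantees (by Remark \ref{rmk:ezero}) that $\Lambda_2>0$, so that $a>0$ and all denominators encountered are nonzero.
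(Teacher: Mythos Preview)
Your proposal is correct and follows essentially the same approach as the paper: start from the two-sided bound of Lemma~\ref{lem:ULboundsball}, substitute $\Lambda_2=a^2(a^2+\tau)=a^2(\tau+\alpha)+a^2(a^2-\alpha)$, solve for $\tau+\alpha$ using $a^2<p_{1,1}^2<d+2$ from Lemma~\ref{lem:LS} (and $a^2<d$ for the upper bounds), then subtract $\alpha$ and add $a^2$ to obtain the remaining strings. Your write-up simply spells out the algebra that the paper leaves implicit.
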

\begin{proof} To establish the first string of inequalities, we start with $d(\tau+\alpha)\leq \Lambda_2(\mathbb{B}; \tau, \alpha)\leq (d+2)(\tau+\alpha)$ from Lemma \ref{lem:ULboundsball}. Substituting $\Lambda_2(\mathbb{B}; \tau, \alpha)=a^2(\tau+a^2)=a^2(\tau+\alpha)+a^2(a^2-\alpha)$, we solve this string of inequalities for $\tau+\alpha$,  and use the inequality $a^2<p_{1,1}^2<d+2$ from Lemma \ref{lem:LS}. To obtain the second string of inequalities, we solve the first string of inequalities for $\tau$. Finally, we add $a^2$ to all three parts of the inequality for $\tau$ to obtain the bounds on $b^2$.
\end{proof}

We use this lemma to prove:

\begin{lem}[Large $\tau+\alpha$ Case]\label{lem:largeta} Suppose $\alpha \in (-\tau, 0)$. With $a$ as in Theorem \ref{thm:groundstate}, we have
\begin{equation}\label{eqn:largeta}
\tau+\alpha-\frac{3a^2}{d+2}> 0
\end{equation}
for all $\tau,\alpha$ such that either $a^2>(3+\alpha)(d+2)/(d+5)$ or $\tau+\alpha>3(3+\alpha)/(d+5)$.
\end{lem}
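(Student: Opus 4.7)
The conclusion $\tau+\alpha>3a^2/(d+2)$ is directly comparable with the bounds on $\tau+\alpha$ supplied by Lemma \ref{lem:boundsatb}, so the plan is to split into the two hypothesized cases and in each one reduce to an elementary algebraic inequality in $a^2$.

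First I would handle the case $a^2>(3+\alpha)(d+2)/(d+5)$. Noting that by Lemma \ref{lem:LS} we have $a^2<p_{1,1}^2<d+2$, so $d+2-a^2>0$, I can cross-multiply the hypothesis to rewrite it as
\[
(a^2-\alpha)(d+2)-3(d+2-a^2)=a^2(d+5)-(3+\alpha)(d+2)>0,
\]
which is equivalent to
\[
\frac{a^2-\alpha}{d+2-a^2}>\frac{3}{d+2}.
\]
Multiplying by $a^2>0$ and invoking the lower bound $\tau+\alpha\geq a^2(a^2-\alpha)/(d+2-a^2)$ from Lemma \ref{lem:boundsatb} gives $\tau+\alpha>3a^2/(d+2)$, as required.

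For the second case I would assume $\tau+\alpha>3(3+\alpha)/(d+5)$ and that the first hypothesis fails, i.e.\ $a^2\leq (3+\alpha)(d+2)/(d+5)$. Dividing this latter inequality by $d+2$ and multiplying by $3$ yields
\[
\frac{3a^2}{d+2}\leq\frac{3(3+\alpha)}{d+5}<\tau+\alpha,
\]
which is the desired conclusion. No appeal to the upper bound in Lemma \ref{lem:boundsatb} is required here, since the second hypothesis on $\tau+\alpha$ directly dominates $3a^2/(d+2)$ whenever the first hypothesis fails.

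Overall, the argument is purely algebraic once the lower bound on $\tau+\alpha$ from Lemma \ref{lem:boundsatb} is in hand; there is no genuine obstacle beyond correctly carrying out the cross-multiplication in the first case (where positivity of $d+2-a^2$ from Lemma \ref{lem:LS} is essential). The only subtle point worth flagging is that the two hypotheses are in fact complementary in the sense that the first one is precisely the algebraic inequality equivalent to the desired conclusion combined with Lemma \ref{lem:boundsatb}'s lower bound, while the second hypothesis serves as a ``backup'' that handles the small-$a^2$ regime without needing any spectral bound at all.
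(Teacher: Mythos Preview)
Your proof is correct and follows essentially the same approach as the paper: both arguments apply the lower bound $\tau+\alpha\geq a^2(a^2-\alpha)/(d+2-a^2)$ from Lemma~\ref{lem:boundsatb} (using $a^2<d+2$ from Lemma~\ref{lem:LS}) to handle the first hypothesis, and then directly bound $3a^2/(d+2)$ by $3(3+\alpha)/(d+5)$ when the first hypothesis fails. The only cosmetic difference is that the paper subtracts $3a^2/(d+2)$ first and simplifies the resulting rational expression, whereas you rearrange the hypothesis into the equivalent inequality $(a^2-\alpha)/(d+2-a^2)>3/(d+2)$ before multiplying by $a^2$ and invoking the lower bound.
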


\begin{proof}
First, apply the lower bound on $\tau+\alpha$ from Lemma~\ref{lem:boundsatb}, which gives us
\begin{align}
\tau+\alpha-\frac{3a^2}{d+2}&\geq \frac{a^4-a^2\alpha}{d+2-a^2}-\frac{3a^2}{d+2} \nonumber \\
&=\frac{(d+5)a^2-(d+2)(3+\alpha)}{(d+2)(d+2-a^2)}a^2. \label{eqn:LargetaLB}
\end{align}
Thus, to prove our desired inequality, it suffices to prove the rational expression in \eqref{eqn:LargetaLB} is positive. Since $a^2<d+2$ holds by Lemma \ref{lem:LS}, this expression is positive whenever the numerator is. This occurs whenever
\[
a^2>\frac{(3+\alpha)(d+2)}{d+5}.
\]
Thus \eqref{eqn:largeta} holds whenever the above inequality holds for $a^2$. Suppose now that $a^2\leq (3+\alpha)(d+2)/(d+5)$; then
\begin{align*}
\tau+\alpha-\frac{3a^2}{d+2}&\geq \tau+\alpha-\frac{3(3+\alpha)(d+2)}{(d+2)(d+5)}\\
&=\tau+\alpha-\frac{3(3+\alpha)}{d+5}.
\end{align*}
This is last expression is positive exactly when $\tau+\alpha>3(3+\alpha)/(d+5)$.
\end{proof}

The next lemma addresses the small $\tau+\alpha$ case and complements Lemma \ref{lem:largeta}.
\begin{lem}[Small $\tau+\alpha$, Larger Dimensions]\label{lem:smallta} 
Again say $\alpha \in (-\tau,0)$ and let $a, b,$ and $\gamma$ be as in Theorem \ref{thm:groundstate}. Suppose that both $a^2\leq(3+\alpha)(d+2)/(d+5)$ and $\tau+\alpha\leq3(3+\alpha)/(d+5)$. Then for all $r\in(0,1]$,
\begin{equation}\label{ineq:nice}
\left(\tau+\alpha-\frac{3a^2}{d+2}\right)j_1(ar)+\gamma\left(\tau+\alpha+\frac{3b^2}{d+2}\right)i_1(br)>0.
\end{equation}
\end{lem}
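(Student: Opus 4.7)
My argument has three stages: a reformulation of the left-hand side, a reduction to a single algebraic inequality, and the verification of that inequality.

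First, set $R(r)=j_{1}(ar)+\gamma i_{1}(br)$ and note that $\delr R=R''+\tfrac{d-1}{r}R'-\tfrac{d-1}{r^{2}}R=-a^{2}j_{1}(ar)+\gamma b^{2}i_{1}(br)$ (since $\Delta(j_{1}(ar)Y_{1})=-a^{2}j_{1}(ar)Y_{1}$ and similarly for $i_{1}$). A short rearrangement shows that the left-hand side of \eqref{ineq:nice} equals
\[
E(r):=(\tau+\alpha)\,R(r)+\tfrac{3}{d+2}\,\delr R(r).
\]
By Lemma~\ref{lem:trialprops}(1), $R>0$ on $(0,1]$, and $\tau+\alpha>0$, while $R''\le 0$ and $rR'-R=\int_{0}^{r}tR''(t)\,dt\le 0$ imply $\delr R\le 0$ on $[0,1]$. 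Thus the first summand of $E$ is positive and the second is negative; the point is to show the first dominates.

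Second, I would reduce $E>0$ to a single algebraic inequality via Lemma~\ref{lem:besselprops}(6). For $r\in(0,1]$, set $q(r):=j_{1}(ar)/i_{1}(br)$, which lies in $(0,a/b)$. The map $q\mapsto(a^{2}q-\gamma b^{2})/(q+\gamma)$ has strictly positive derivative $\gamma(a^{2}+b^{2})/(q+\gamma)^{2}$, so dividing numerator and denominator of $|\delr R|/R$ by $i_{1}(br)$ yields
\[
\frac{|\delr R(r)|}{R(r)}=\frac{a^{2}j_{1}(ar)-\gamma b^{2}i_{1}(br)}{j_{1}(ar)+\gamma i_{1}(br)}<\frac{a^{3}-\gamma b^{3}}{a+\gamma b},\qquad r\in(0,1],
\]
where $\gamma b^{3}<a^{3}$ (equivalently $a\,i_{1}''(b)>-b\,j_{1}''(a)$) follows from termwise comparison of the series $-j_{1}''(z)/z=6c_{1}-20c_{2}z^{2}+\dotsb$ and $i_{1}''(z)/z=6c_{1}+20c_{2}z^{2}+\dotsb$, together with $a\le b$. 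Clearing positive denominators, $E(r)>0$ on $(0,1]$ follows from the algebraic inequality
\[
(\tau+\alpha)(a+\gamma b)+\tfrac{3}{d+2}\bigl(\gamma b^{3}-a^{3}\bigr)\ge 0. \qquad(\ast)
\]

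Third, to verify $(\ast)$, the hypotheses force $a^{2}\le 3(d+2)/(d+5)$ and $b^{2}\le 3$, placing us in the range of Lemma~\ref{lem:besselprops}(5). Using $-j_{1}''(a)\ge d_{1}a-d_{2}a^{3}$, $i_{1}''(b)\le d_{1}b+\tfrac{6}{5}d_{2}b^{3}$, and the ratio $d_{2}/d_{1}=5/(6(d+4))$, one obtains
\[
\gamma=-\frac{a^{2}j_{1}''(a)}{b^{2}i_{1}''(b)}\ge\Bigl(\frac{a}{b}\Bigr)^{3}\cdot\frac{1-5a^{2}/(6(d+4))}{1+b^{2}/(d+4)},
\]
which gives $\gamma b^{3}-a^{3}\ge -a^{3}(5a^{2}+6b^{2})/(6(d+4+b^{2}))$. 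Combined with the lower bound $\tau+\alpha\ge a^{2}(a^{2}-\alpha)/(d+2-a^{2})$ from Lemma~\ref{lem:boundsatb} (and, if needed, the refinement $a+\gamma b\ge a(b^{2}+a^{2}\eta)/b^{2}$ where $\eta$ denotes the $\gamma$-ratio above), inequality $(\ast)$ reduces to a polynomial inequality in $a^{2}$, $b^{2}=a^{2}+\tau$, $\alpha$, and $d$. The main obstacle is this final polynomial verification; however, the hypotheses (which also force $\alpha>-3$ and $\tau\le 3$, keeping all quantities bounded) are sharp enough that the verification should go through, albeit as a tedious bookkeeping exercise.
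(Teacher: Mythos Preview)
Your reduction is the paper's reduction in different clothing. Rewriting the left-hand side as $(\tau+\alpha)R+\tfrac{3}{d+2}\delr R$ and then using $j_{1}(ar)/i_{1}(br)<a/b$ to pass to the algebraic condition $(\ast)$ is exactly what the paper does: it divides by $i_{1}(br)$, assumes the $j_{1}$-coefficient is negative (otherwise there is nothing to prove), and arrives at $\tau+\alpha>3(a^{3}-\gamma b^{3})/((d+2)(a+\gamma b))$, which is your $(\ast)$ rearranged. The lower bound on $\gamma$ via Lemma~\ref{lem:besselprops}(5) and the lower bound on $\tau+\alpha$ via Lemma~\ref{lem:boundsatb} are the same ingredients the paper uses.

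The gap is your last paragraph. Two points. First, the paper does not leave $b$ as a free variable: after checking that the hypotheses force $a^{2}<d$ even for $d=2$, it invokes the \emph{upper} bound $b^{2}\le d(a^{2}-\alpha)/(d-a^{2})$ from Lemma~\ref{lem:boundsatb} to eliminate $b$ both from the $\gamma$-lower-bound $\gamma_{\mathrm{LB}}$ and from the threshold $\gamma^{*}$ that $(\ast)$ demands. Your sketch keeps $b$ in play, so the target becomes a polynomial inequality in three parameters $(a^{2},b^{2},\alpha)$ subject to constraints rather than two; without that elimination it is not clear the verification closes in the same way. Second, once the paper has reduced to a polynomial $p_{\alpha}(x)$ in $x=a^{2}$ with $0<x<d$, the verification is not mere bookkeeping: one first shows $\partial p_{\alpha}/\partial\alpha<0$ on the relevant range to reduce to $\alpha=0$, then sets $P(x)=p_{0}(x)/x$ and argues $P''>0$, hence $P'<0$, hence $P$ is minimized at $x=d$, and finally checks $P(d)=6d^{2}(d^{2}+2d-6)>0$ for $d\ge 2$. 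None of these steps is deep, but the chain is specific enough that ``should go through'' undersells it; the approach is right, but the proof is not complete until this analysis is actually carried out.
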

\begin{proof}
Observe that our bound on $\tau+\alpha$ implies that $3+\alpha>0$. Next note that our bound on $a^2$ yields
\[
a^2\leq\frac{(3+\alpha)(d+2)}{d+5}< 3+\alpha<3.
\]
When $d=2$, we have $a^2\leq(3+\alpha)4/7\leq12/7<2$. So $a^2<d$ for all dimensions $d\geq 2$. This means that the upper bounds on $\tau+\alpha$ and $b^2$ from Lemma \ref{lem:boundsatb} hold. Also, notice that
\[
b^2=\tau+\alpha+a^2-\alpha \leq \frac{3(3+\alpha)}{d+5}+\frac{(3+\alpha)(d+2)}{d+5}-\alpha=3.
\]
These bounds on $a^2$ and $b^2$ mean that we can apply the bounds on $j_1''$ and $i_1''$ from Lemma~\ref{lem:besselprops}. 
We also establish a lower bound on $\gamma$ that we will use later on in this proof:
\begin{align*}
\gamma&=-\frac{a^2j_1''(a)}{b^2i_1''(b)} &&\text{by choice of $\gamma$}\\
&\geq\frac{a^3d_1-a^5d_2}{b^3d_1+\frac{6}{5}b^5d_2} &&\text{by Lemma~\ref{lem:besselprops}, with $d_1,d_2$ defined there}\\
&=\frac{a^3}{b^3} \frac{1-ca^2}{1+\frac{6}{5}cb^2} &&\text{where $c=d_2/d_1=5/(6(d+4))$}\\
&=\frac{a^3}{b^3} \frac{6(d+4)-5a^2}{6(d+4)+6b^2} &&\text{simplifying via the value of $c$}\\
&\geq\frac{a^3}{b^3} \frac{(6(d+4)-5a^2)(d-a^2)}{6(d+4)(d-a^2)+6d(a^2-\alpha)} &&\text{from the UB on $b^2$ from Lemma~\ref{lem:boundsatb}}\\
&=\frac{a^3}{b^3} \frac{(6(d+4)-5a^2)(d-a^2)}{6d(d+4-\alpha)-24a^2}=:\gamma_{\text{LB}}.
\end{align*}
We now derive a sufficient condition for inequality~\eqref{ineq:nice}.  Divide through by $i_1(br)$; we now aim to show
\[
\left(\tau+\alpha-\frac{3a^2}{d+2}\right)\frac{j_1(ar)}{i_1(br)}+\gamma\left(\tau+\alpha+\frac{3b^2}{d+2}\right)>0.
\]
We have from Lemma~\ref{lem:besselprops} that $j_1(ar)/i_1(br)\leq a/b$ for all $r\in[0^+,1]$. If the coefficient $\left(\tau+\alpha-\frac{3a^2}{d+2}\right)$ is nonnegative, there is nothing to prove, so we may assume it is negative. It thus suffices to prove
 \[
 \left(\tau+\alpha-\frac{3a^2}{d+2}\right)\frac{a}{b}+\gamma\left(\tau+\alpha+\frac{3b^2}{d+2}\right)>0.
 \]
Solving this for $\tau+\alpha$, we have
\[
\tau+\alpha>\frac{3(a^3-b^3\gamma)}{(d+2)(a+b\gamma)}.
\]
Using the lower bound on $\tau+\alpha$ from Lemma~\ref{lem:boundsatb}, we see that a sufficient condition for this to be satisfied is if
\[
\frac{a^2(a^2-\alpha)}{d+2-a^2}>\frac{3(a^3-b^3\gamma)}{(d+2)(a+b\gamma)}.
\]
This in turn can be solved for $\gamma$, yielding
\[
\gamma>\frac{a^3}{b^3}\left(\frac{3(d+2-a^2)-(d+2)(a^2-\alpha)}{\frac{a^2}{b^2}(a^2-\alpha)(d+2)+3(d+2-a^2)}\right).
\]
We apply the upper bound on $b^2$ from Lemma~\ref{lem:boundsatb} to the $a^2/b^2$ term in the denominator, which gives us
\[
\text{RHS }\leq\frac{a^3}{b^3}\left(\frac{3(d+2-a^2)-(d+2)(a^2-\alpha)}{a^2(d-a^2)\left(\frac{d+2}{d}\right)+3(d+2-a^2)}\right)=:\gamma^*.
\]
Since $\gamma \geq \gamma_{\text{LB}}$, it suffices to show that $\gamma_{\text{LB}}>\gamma^*$. Thus we consider:

\begin{align*}
\gamma_{\text{LB}}-\gamma^*&=\frac{a^3}{b^3}\left( \frac{(6(d+4)-5a^2)(d-a^2)}{6d(d+4-\alpha)-24a^2}-\frac{3(d+2-a^2)-(d+2)(a^2-\alpha)}{a^2(d-a^2)\left(\frac{d+2}{d}\right)+3(d+2-a^2)}\right)\\
&=\frac{a^3}{b^3}\left(\frac{A}{(6d(d+4-\alpha)-24a^2)(a^2(d-a^2)\left(\frac{d+2}{d}\right)+3(d+2-a^2))}\right),
\end{align*}
where the numerator $A$ is given by
\begin{align*}
\left(6(d+4)-5a^2\right)(d-a^2)\left(a^2(d-a^2)\left(\frac{d+2}{d}\right)+3(d+2-a^2)\right)-\left(3(d+2-a^2)-(d+2)(a^2-\alpha)\right)\left(6d(d+4-\alpha)-24a^2\right).
\end{align*}

Note that $a^3/b^3>0$. Additionally, since $d>a^2$, the second denominator term $a^2(d-a^2)(d+2)/d+3(d+2-a^2)$ is positive. The other denominator term satisfies
\[
6d(d+4-\alpha)-24a^2>6d(d+4-\alpha)-24d=6d(d-\alpha),
\]
and hence is also positive. Thus, it suffices to show that $A$ is positive. To do this, we first set $x=a^2$ and multiply by $d$ to clear the denominator; then
\begin{align*}
Ad&=(6(d+4)-5x)(d-x)(x(d-x)(d+2)+3d(d+2-x))-d(3(d+2-x)-(d+2)(x-\alpha))(6d(d+4-\alpha)-24x)\\
&=-5(d+2)x^4+(16d^2+41d+48)x^3-d(17d^2+58d+114)x^2\\
&\qquad+3d(4d^3+(13-2\alpha)d^2+2(5-\alpha)d+16\alpha)x-6\alpha d^2(d+2)(d+1-\alpha)\\
&=:p_\alpha(x).
\end{align*}
First, we aim to reduce dependence of $p$ on $\alpha$. Note that
\[
\frac{\partial^2}{\partial \alpha^2}p_\alpha(x)=12d^2(d+2)>0,
\]
so the first derivative $q_\alpha(x):=\partial p_\alpha/\partial \alpha$ is increasing in $\alpha$. Thus, for $x\in[0,d]$, we have
\begin{align*}
q_\alpha(x)&\leq q_0(x) =-6d^2(d+1)x +48dx-6 d^2(d+1)(d+2)\\
&\leq 0+ 48d^2-6d^2(d^2+3d+2) \\
&=-6 d^2 (d^2+3d-6)
\end{align*}
The quadratic $d^2+3d-6$ has roots at $d\approx-4.37,1.37$, so $-6d(d^2+3d-6)<0$ for all $d\geq 2$. So $q_\alpha(x)<0$. This means that $p_\alpha(x)$ is decreasing in $\alpha$, and hence is minimized at $\alpha=0$. Note the constant term in $p_\alpha(x)$ vanishes when $\alpha=0$, so we may now consider
\[
P(x):=p_0(x)/x=-5(d+2)x^3+(16d^2+41d+48)x^2-d(17d^2+58d+114)x+3d^2(d+2)(4d+5).
\]
We aim to show $P(x)>0$ on $[0,d]$. Again, we look at higher derivatives:
\[
P''(x)= -30(d+2)x+2(16d^2+41d+48).
\]
This function is clearly decreasing in $x$, so
\[
P''(x)\geq P''(d)=2 d^2+22d+96,
\]
and hence $P'(x)$ is increasing in $x$. But then $P'(x)\leq P'(d)=- 6 d^2-18d <0$, so $P$ is decreasing in $x$. Thus $P$ is minimized when $x=d$, so we look at $P(d)=6d^2(d^2+2d-6)$, which has roots $d=0,-1\pm\sqrt{7}$. Thus $P(d)>0$ for all $d\geq 2>-1+\sqrt{7}$.  This means that $Ad=p_\alpha(x)>0$ since $0<x=a^2<d$, and hence $\gamma_{\text{LB}}-\gamma^*>0$. But that is a sufficient condition for inequality~\eqref{ineq:nice} to be satisfied, so we are done.
\end{proof}

\section{Proof of the Isoperimetric Inequality}

In this, our final section, we prove the paper's main result and related corollaries. We begin with a preliminary tool that allows us to reduce the proof of Theorem \ref{thm:mainthm} to the case where $|\Omega|=|\mathbb{B}|$.

\begin{lem} \label{lem:scaling}
The eigenvalues of problem \eqref{eqn:robinplate} scale according to
\[
\Lambda(\Omega;\tau,\alpha)=t^{-4}\Lambda(t^{-1}\Omega;t^2\tau,t^3\alpha).
\]
\end{lem}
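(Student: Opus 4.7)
The plan is to verify the scaling identity through the Rayleigh quotient and the min-max characterization of eigenvalues, using a simple change of variables. The key idea is that test functions on $\Omega$ correspond bijectively to test functions on $t^{-1}\Omega$ via pullback, and every ingredient in the Rayleigh quotient scales as a clean power of $t$.

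First, I would set up the change of variables: for $u \in H^2(\Omega)$, define $\tilde u \in H^2(t^{-1}\Omega)$ by $\tilde u(y) = u(ty)$. The chain rule gives $|D^2_y \tilde u|^2 = t^4 |D^2_x u|^2$ and $|D_y \tilde u|^2 = t^2 |D_x u|^2$ at corresponding points. The Jacobian factors are $dx = t^d\,dy$ on the interior and $dS_x = t^{d-1}\,dS_y$ on the boundary. Assembling these, each integral in $Q[\Omega;\alpha,u]$ transforms as
\begin{align*}
\int_\Omega |D^2u|^2\,dx &= t^{d-4}\int_{t^{-1}\Omega}|D^2\tilde u|^2\,dy,\\
\int_\Omega |Du|^2\,dx &= t^{d-2}\int_{t^{-1}\Omega}|D\tilde u|^2\,dy,\\
\int_\Omega u^2\,dx &= t^d\int_{t^{-1}\Omega}\tilde u^2\,dy,\\
\int_{\partial\Omega} u^2\,dS &= t^{d-1}\int_{\partial(t^{-1}\Omega)}\tilde u^2\,dS.
\end{align*}

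Next, I would plug these into the Rayleigh quotient \eqref{eq:RQ} and factor out an overall $t^{-4}$ from the numerator by writing $\tau t^{d-2} = t^{d-4}\cdot t^2\tau$ and $\alpha t^{d-1} = t^{d-4}\cdot t^3\alpha$. This yields the clean identity
\[
Q[\Omega;\alpha,u] \;=\; t^{-4}\,Q[t^{-1}\Omega;\,t^3\alpha,\,\tilde u],
\]
where the Rayleigh quotient on the right is the one associated with parameters $\tau' = t^2\tau$ and $\alpha' = t^3\alpha$.

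Finally, since $u \mapsto \tilde u$ is a linear bijection $H^2(\Omega) \to H^2(t^{-1}\Omega)$ that sends $k$-dimensional subspaces to $k$-dimensional subspaces, the min-max characterization (as used in the proof of Proposition~\ref{Prop:Cont}) transfers directly, giving
\[
\Lambda_k(\Omega;\tau,\alpha) \;=\; t^{-4}\Lambda_k(t^{-1}\Omega;\,t^2\tau,\,t^3\alpha).
\]
There is no real obstacle here; the only care required is bookkeeping the powers of $t$ produced by the Jacobians and derivative scalings, and verifying that the boundary measure contributes $t^{d-1}$ rather than $t^d$, which is precisely what forces the cubic scaling of $\alpha$.
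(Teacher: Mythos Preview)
Your proof is correct and follows essentially the same route as the paper: both arguments establish the scaling identity at the level of the Rayleigh quotient via the change of variables $x = ty$, tracking the Jacobian factors $t^d$ (interior) and $t^{d-1}$ (boundary) together with the derivative scalings. If anything, you are slightly more explicit than the paper in invoking the min-max characterization to pass from the Rayleigh-quotient identity to the eigenvalue identity, which the paper leaves implicit.
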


\begin{proof}
Let $u$ be a valid trial function on $\Omega$, and $t>0$ a positive scaling constant. Then $\tu:=u(x/t)$ is a valid trial function on $t\Omega$.
Looking at the numerator of the Rayleigh quotient,
\begin{align*}
N[t\Omega; \tau, \alpha, \tu]&=\int_{t\Omega}\left(|D^2\tu|^2+\tau|D\tu|^2\right)\,dx+\alpha\int_{\partial(t\Omega)} \tu^2\,dS_x\\
&=\int_{t\Omega}\left(|t^{-2}(D^2u)(x/t)|^2+\tau|t^{-1}(Du)(x/t)|^2\right)\,dx+\alpha\int_{\partial(t\Omega)}u(x/t)^2\,dS_x\\
&=t^d\int_{\Omega}\left(t^{-4}|D^2u|^2+\tau t^{-2}|Du|^2\right)\,dy+t^{d-1}\alpha\int_{\partial\Omega}u^2\,dS_y,
\end{align*}
with the last line following from the change of variable $y=x/t$, $dy=t^{-d}dx$.
Similarly (and much more simply), we can rewrite the denominator as
\[
\int_{t\Omega}\tu^2\,dx=t^d\int_\Omega u^2\,dy.
\]
Thus we have the following relationship between the Rayleigh quotients:
\begin{align*}
Q[t\Omega; \tau, \alpha, \tu]&=\frac{N[t\Omega; \tau,\alpha,\tu]}{\int_{t\Omega}\tu^2\,dx}\\
&=\frac{t^d\int_{\Omega}\left(t^{-4}|D^2u|^2+\tau t^{-2}|Du|^2\right)\,dy+t^{d-1}\alpha\int_{\partial\Omega}u^2\,dS_y}{t^d\int_\Omega u^2\,dy}\\
&=\frac{t^{-4}\int_{\Omega}\left(|D^2u|^2+\tau t^{2}|Du|^2\right)\,dy+t^{-4}(\alpha t^3) \int_{\partial\Omega}u^2\,dS_y}{\int_\Omega u^2\,dy}\\
&=\frac{t^{-4}N[\Omega; t^2\tau, t^3\alpha, u]}{\int_\Omega u ^2\,dy}\\
&=t^{-4}Q[\Omega; t^2\tau, t^3\alpha, u].
\end{align*}
\end{proof}

We continue to consider $\Omega$ as in Theorem \ref{thm:mainthm} normalized so that $|\Omega|=|\mathbb{B}|$ and keep $\alpha \in (-\tau,0)$. We follow Weinberger's approach \cite{Weinberger} to prove our isoperimetric inequality. Recall that our trial functions for $\Lambda_2(\Omega; \tau, \alpha)$ take the form $u_k=\rho x_k/r$, with $\rho$ defined in \eqref{eq:pdef}. By Lemma \ref{lem:COM}, we assume that our domain $\Omega$ has been suitably translated to ensure our trial functions are orthogonal to a ground state eigenfunction $v$ with nonzero mean corresponding to $\Lambda_1(\Omega;\tau,\alpha)$ (in our proof of Theorem \ref{thm:mainthm}, we separately dispense of the case where $v$ has zero mean). Then by the variational characterization of $\Lambda_2(\Omega;\tau,\alpha)$, we have
\[
\Lambda_2(\Omega;\tau,\alpha)=\inf_{\substack{u\in H^2(\Omega)\\ u\perp v}}Q[\Omega; \alpha, u]\leq Q[\Omega; \alpha, u_k].
\]
Recalling the definition of the Rayleigh quotient $Q$, we can rewrite the inequality above as
\[
\Lambda_2(\Omega;\tau,\alpha)\int_\Omega |u_k|^2\,dx \leq \int_\Omega \left(|D^2u_k|^2+\tau|Du_k|^2\right)\,dx+\alpha \int_{\partial\Omega}u_k^2\,dS.
\]
Summing over all $k=1,\dots,d$, the left-hand side can be easily simplified. The sum for the right-hand side is more computationally onerous. Fortunately, the volume integral terms are treated in \cite{chasmanineq} (see p.437) and the surface integral term is treated in Proposition 1 of \cite{FreitasLaugesen1}. Note that \cite{chasmanineq} considers trial functions of the form (radial function) times $x_k/r$, so the algebra remains valid. We then have
\[
\Lambda_2(\Omega;\tau,\alpha)\int_\Omega \rho^2\,dx \leq \int_\Omega N[\rho]\,dx,
\]
where $N[\rho]$ is defined as
\[
N[\rho]=(\rho'')^2+\frac{3(d-1)}{r^4}(\rho-r\rho')^2+\tau\left((\rho')^2+\frac{d-1}{r^2}\rho^2\right)+\alpha\left(2\rho\rho'+\frac{d-1}{r}\rho^2\right).
\]

Note that by Lemma~\ref{lem:trialprops},
\[
\frac{d}{dr}\rho(r)^2=2\rho(r)\rho'(r)\geq0\quad\text{on $[0,\infty)$},
\]
so $\rho^2$ is increasing in $r$, and thus we observe the following monotonicity result:
\[
\int_\Omega \rho^2\,dx\geq \int_{\mathbb{B}} \rho^2\,dx.
\]

In our proof of Theorem \ref{thm:mainthm}, we show that $N[\rho]$ satisfies a \emph{partial monotonicity} condition. We call a function $F:\mathbb{R}^d \to \mathbb{R}$ partially monotonic on $\Omega$ if
\begin{equation}\label{cond:PM}
F(x)>F(y) \quad \textup{whenever }x\in \Omega \textup{ and } y\notin \Omega.
\end{equation}

Our goal is now to show:
\begin{lem}\label{lem:monotonicitygoal}
The function $N[\rho]$ is partially monotonic on $\mathbb{B}$.
\end{lem}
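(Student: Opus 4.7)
The plan is to exploit the radial symmetry of $N[\rho]$. Since $\rho$ is radial and $N[\rho]$ is defined purely in terms of $\rho, \rho', \rho''$ evaluated at $r = |x|$, the function $N[\rho]$ depends only on $r$. Partial monotonicity on $\mathbb{B}$ therefore reduces to the one-variable statement
\[
N[\rho](r_1) > N[\rho](r_2) \quad \text{for all } 0 \le r_1 < 1 \le r_2.
\]
A natural route is to show that $N[\rho]$ is strictly decreasing on $[0,1]$ and nonincreasing on $[1,\infty)$; since $\rho''(1) = 0$ (by the boundary condition $\Mrad R\big|_{r=1} = 0$), the function $N[\rho]$ is continuous at $r = 1$, and the chain $N[\rho](r_1) > N[\rho](1) \ge N[\rho](r_2)$ then closes the argument.

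I would handle the exterior piece $r \ge 1$ first, since $\rho$ is linear there. Substituting $\rho'' = 0$ and $\rho - r\rho' = R(1) - R'(1)$ (constant) simplifies $N[\rho]$ to
\[
\frac{3(d-1)(R(1) - R'(1))^2}{r^4} + \tau R'(1)^2 + \frac{d-1}{r}\rho(r)^2\!\left(\frac{\tau}{r} + \alpha\right) + 2\alpha R'(1)\rho(r).
\]
Differentiating term by term, the $r^{-4}$ piece is strictly decreasing, the constant piece drops out, and the remaining two pieces are controlled by the positivity of $\rho$ and $\rho'$ (Lemma \ref{lem:trialprops}(1)), the sign of $\alpha$, and the decay of $\alpha\rho + \tau\rho'$ (Lemma \ref{lem:trialprops}(3)). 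I expect this to give $\frac{d}{dr} N[\rho] \le 0$ on $[1,\infty)$ after straightforward, if slightly tedious, bookkeeping.

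The interior piece $0 \le r \le 1$ is the core of the argument and where the technical lemmas of Section 7 are designed to be invoked. Here $R(r) = j_1(ar) + \gamma i_1(br)$ satisfies the factored equation $(\Delta_{\textup{rad},1} + a^2)(\Delta_{\textup{rad},1} - b^2)R = 0$, and I would compute $\frac{d}{dr} N[\rho]$ using the Bessel recurrence relations \eqref{jrecur1}--\eqref{irecur3} together with the algebraic identities among $j_1, i_1$ and their derivatives. After careful reorganization, the derivative should take a form whose sign is controlled by the combination $\tau + \alpha - \frac{3a^2}{d+2}$. In the large $\tau + \alpha$ regime, Lemma \ref{lem:largeta} supplies the positivity of this combination outright, yielding monotonicity immediately; in the complementary small $\tau + \alpha$ regime, Lemma \ref{lem:smallta} provides the finer inequality
\[
\left(\tau+\alpha-\tfrac{3a^2}{d+2}\right)j_1(ar) + \gamma\!\left(\tau+\alpha+\tfrac{3b^2}{d+2}\right)i_1(br) > 0,
\]
which is precisely tailored to absorb the residual terms. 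The thresholds in Lemmas \ref{lem:largeta} and \ref{lem:smallta} partition the entire admissible range of $\tau + \alpha$, closing the case split.

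The main obstacle is the interior algebra: differentiating $N[\rho]$ produces many cross-terms involving $j_1, j_2, j_3$, $i_1, i_2, i_3$ and their derivatives at $ar$ and $br$, and one must re-combine these using the recurrence relations of Section 3 (together with the bounds on $a$, $b$, and $\tau + \alpha$ from Lemma \ref{lem:boundsatb}) so that the expression collapses into the form matched by Lemmas \ref{lem:largeta}--\ref{lem:smallta}. This rearrangement parallels Chasman's treatment of the free plate \cite{chasmanineq}, with the Robin parameter $\alpha$ contributing new boundary-weighted contributions whose sign is tracked via Lemma \ref{lem:trialprops}.
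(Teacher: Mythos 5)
Your overall strategy --- reduce to a one-variable comparison across $r=1$, handle $r\ge 1$ via the linearity of $\rho$, and handle $r\in[0,1]$ by rewriting the derivative in terms of $j_1,i_1,j_3,i_3$ so that Lemmas \ref{lem:largeta} and \ref{lem:smallta} cover complementary regimes --- is exactly the paper's, and your exterior computation and your observation that the two technical lemmas partition the admissible range are both correct. There is, however, one structural misstep: you propose to prove that the \emph{full} expression $N[\rho]$ is strictly decreasing on $[0,1]$, and this is stronger than what the paper establishes and quite possibly false. The culprit is the term $(\rho'')^2$: since $\rho''(0)=\rho''(1)=0$ while $\rho''<0$ on $(0,1)$, the quantity $(\rho'')^2$ vanishes at both endpoints and is positive in between, so it is increasing on an initial portion of $(0,1)$ (indeed $\frac{d}{dr}(\rho'')^2=2\rho''\rho'''\ge 0$ wherever $\rho''$ is decreasing toward its minimum). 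Nothing in your outline controls this increase against the decrease of the remaining terms, and the paper makes no attempt to do so.

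The fix, which is what the paper actually does, is to peel $(\rho'')^2$ off and compare it directly across the boundary rather than monotonically: $(\rho'')^2\equiv 0$ on $[1,\infty)$ and $(\rho'')^2\ge 0$ on $[0,1]$ with strict positivity on $(0,1)$, so this term satisfies the partial monotonicity comparison \eqref{cond:PM} on its own (with equality only when $x=0$). One then shows only that the \emph{remaining} terms, grouped as $N_2(r)=\frac{3}{r^4}(\rho-r\rho')^2+\tau\frac{\rho^2}{r^2}+\alpha\frac{\rho^2}{r}$ and $N_3(r)=\tau(\rho')^2+2\alpha\rho\rho'$, are decreasing on $[0,1]$ and that $(d-1)N_2+N_3$ is decreasing on $[1,\infty)$, with strict decrease of $N_3$ near $r=0$ supplying strictness in the residual case $x=0$. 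With that regrouping, your interior analysis lands where it should: the only delicate point is the nonnegativity of $\frac{6}{r^2}(\rho-r\rho')+3\rho''+\tau\rho+\frac{\alpha r\rho}{2}$ arising from $N_2'$, whose Bessel expansion has $j_1$-coefficient $\tau+\alpha-\frac{3a^2}{d+2}$, handled precisely by Lemmas \ref{lem:largeta} and \ref{lem:smallta} as you anticipated.
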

\begin{proof}
Drawing from both the free plate and the Robin membrane, we group the terms of $N[\rho]$ as follows:
\begin{align*}
N_1(r)&=(\rho'')^2\\
N_2(r)&=\frac{3}{r^4}(\rho-r\rho')^2+\tau \frac{\rho^2}{r^2}+\alpha\frac{\rho^2}{r}\\
N_3(r)&=\tau (\rho')^2+2\alpha\rho\rho',
\end{align*}
so that we have $N[\rho]=N_1(r)+(d-1)N_2(r)+N_3(r)$.

Let us address $N_1$ first. Recall that $\rho$ is linear for $r\geq1$; therefore $\rho''$ vanishes in this region. By Lemma \ref{lem:trialprops}, $\rho'' \leq 0$ on $[0,1]$. Moreover, $\rho'' < 0$ on $(0,1)$, since $\rho''$ is strictly convex on $(0,1)$ with $\rho''(0)=\rho''(1)=0$. We conclude that $(\rho'')^2$ satisfies \eqref{cond:PM} whenever $x\neq 0$ (when $x=0$ we have equality). In the remainder of the proof, we show that $(d-1)N_2(r)+N_3(r)$ is decreasing on $[1,\infty)$, and that $N_2(r)$ and $N_3(r)$ are decreasing on $[0,1]$ with $N_3(r)$ strictly decreasing near $r=0$. 

Consider the terms $(d-1)N_2(r)+N_3(r)$ together for $[1,\infty)$. Recall $\rho$ is linear on this interval. We  write $\rho(r)=Ar+B$, where $A=\rho'(1)$ and $B=\rho(1)-\rho'(1)$ are both nonnegative by Lemma~\ref{lem:trialprops}. Then
\begin{align*}
(d-1)N_2(r)+N_3(r)&=\frac{3(d-1)B^2}{r^4}+\frac{\tau(d-1)B^2}{r^2}+\frac{d-1}{r}(\alpha B^2+2AB\tau)+d(\tau A^2+2\alpha A B)+(d+1)A^2\alpha r\\
(d-1)N_2'(r)+N_3'(r)&=-\frac{12(d-1)B^2}{r^5}-\frac{2\tau(d-1)B^2}{r^3}-\frac{d-1}{r^2}(\alpha B^2+2AB\tau)+(d+1)A^2\alpha.
\end{align*}
Since $\rho'(1)>0$ and $\alpha<0$, $(d-1)N_2(r)+N_3(r)$  eventually becomes negative and decreasing as $r\to\infty$ . We also note that $(d-1)N_2'(r)+N_3'(r)$ is negative whenever $\alpha B+2A\tau\geq0$. But by Lemma~\ref{lem:trialprops}, we have $\tau\rho'(1)+\alpha\rho(1)=(\tau+\alpha)A+\alpha B\geq0$, so
\[
\alpha B+2A\tau\geq -(\tau+\alpha)A+2A\tau=(\tau-\alpha)A\geq 0.
\]
Thus $(d-1)N_2(r)+N_3(r)$ is decreasing on $[1,\infty)$.

We next consider $N_2(r)$ and $N_3(r)$ separately for $[0,1]$. First, note that
\begin{align*}
N_3'(r)&=2\tau\rho'\rho''+2\alpha\rho\rho''+2\alpha(\rho')^2\\
&=2\rho''(\alpha\rho+\tau\rho')+2\alpha(\rho')^2,
\end{align*}
which is nonpositive since $\alpha<0$ and $\alpha\rho+\tau\rho'\geq0$ by Lemma~\ref{lem:trialprops}. Thus $N_3$ is decreasing on $[0,1]$. Moreover, since $\rho'(0)>0$, $N_3(r)$ is strictly decreasing near $r=0$.

To show $N_2$ is decreasing as a function of $r$ for $r\in[0,1]$, we again differentiate:
\begin{align*}
N_2'(r)&=-\frac{2}{r^3}(\rho-r\rho')\left(\frac{6}{r^2}(\rho-r\rho')+3\rho''+\tau\rho\right)-\alpha\frac{\rho^2}{r^2}+\frac{2\alpha\rho\rho'}{r}\\
&=-\frac{2}{r^3}(\rho-r\rho')\left(\frac{6}{r^2}(\rho-r\rho')+3\rho''+\tau\rho+\frac{\alpha r\rho}{2}\right)+\alpha\frac{\rho \rho'}{r}
\end{align*}

Since $\rho-r\rho'$ is nonnegative by Lemma \ref{lem:trialprops}, and the coefficient $\alpha<0$, the above expression will be nonpositive once we establish
\[
n_2(r):=\frac{6}{r^2}(\rho-r\rho')+3\rho''+\tau\rho+\frac{\alpha r\rho}{2}\geq0
\]
To do this, note that $\alpha<0$, $\rho\geq0$, and $r\in[0,1]$ give us
\[
n_2(r)\geq \frac{6}{r^2}(\rho-r\rho')+3\rho''+(\tau+\alpha)\rho.
\]
Recall that on $[0,1]$, our radial function $\rho(r)=j_1(ar)+\gamma i_1(br)$. Then we have (see p.440 of \cite{chasmanineq}, for instance)
\begin{align*}
\frac{6}{r^2}(\rho-r\rho')+&3\rho''+(\tau+\alpha)\rho\\
&=\left(\tau+\alpha-\frac{3a^2}{d+2}\right)j_1(ar)+\gamma\left(\tau+\alpha+\frac{3b^2}{d+2}\right)i_1(br)\\
&\qquad+\frac{3(d+1)}{d+2}\left(a^2j_3(ar)+\gamma b^2 i_3(br)\right).
\end{align*}
The modified Bessel functions $i_1$ and $i_3$ are nonnegative everywhere and, since $\tau+\alpha\geq0$, their coefficients are nonnegative as well. The Bessel functions $j_1(ar)$ and $j_3(ar)$ are nonnegative for $r\in[0,1]$ by Lemma~\ref{lem:besselprops}, and the coefficient of $j_3$ is also clearly nonnegative. Thus we must address the coefficient of $j_1$.

For $a^2>(3+\alpha)(d+2)/(d+5)$ or $\tau+\alpha>3(3+\alpha)/(d+5)$, this coefficient is nonnegative by Lemma~\ref{lem:largeta}. When $a^2\leq (3+\alpha)(d+2)/(d+5)$ and $\tau+\alpha\leq 3(3+\alpha)/(d+5)$, the first line is nonnegative on $[0,1]$ by Lemma~\ref{lem:smallta}. Thus the sum is nonnegative, and so $n_2\geq0$. We have therefore shown $N_2(r)$ is decreasing on $[0,1]$.

Taken in sum, our work shows that $N[\rho]$ is partially monotonic on $\mathbb{B}$.
\end{proof}

We are now prepared to prove our main result.

\begin{proof}[Proof of Theorem \ref{thm:mainthm}]
First suppose $|\Omega|=|\mathbb{B}|$ and $\alpha \in (-\tau,0)$ with $v$ our fixed eigenfunction for $\Lambda_1(\Omega; \tau, \alpha)$. If it so happens that $v$ has zero mean, then the constant function $1$ is a valid trial function in the Rayleigh quotient for $\Lambda_2(\Omega; \tau, \alpha)$ and so
\[
\Lambda_2(\Omega; \tau, \alpha)\leq \alpha \frac{\int_{\partial \Omega}1\,dS}{\int_{\Omega}1\,dx}=\alpha \frac{|\partial \Omega|}{|\Omega|}<0.
\]
By Theorem \ref{thm:groundstate}, we have $\Lambda_2(\mathbb{B}; \tau, \alpha)>0$, and so
\[
\Lambda_2(\Omega; \tau, \alpha) < \Lambda_2(\mathbb{B}; \tau, \alpha)
\]
in this case. Thus, we may assume that the eigenfunction $v$ has nonzero mean. From Lemma \ref{lem:monotonicitygoal} and the discussion that precedes it, we have
\begin{align*}
\Lambda_2(\Omega; \tau, \alpha) & \leq\frac{\int_\Omega N[\rho]\,dx}{\int_\Omega \rho^2\,dx}\\
&\leq\frac{\int_{\Omega} N[\rho]\,dx}{\int_{\mathbb{B}} \rho^2\,dx}\\
&= \frac{\int_{\Omega \cap \mathbb{B}} N[\rho]\,dx+\int_{\Omega \setminus \mathbb{B}}N[\rho]\,dx}{\int_{\mathbb{B}} \rho^2\,dx}\\
&\leq \frac{\int_{\Omega \cap \mathbb{B}} N[\rho]\,dx+|\Omega \setminus \mathbb{B}|\sup_{\Omega \setminus \mathbb{B}}N[\rho]}{\int_{\mathbb{B}} \rho^2\,dx}\\
&\leq \frac{\int_{\Omega \cap \mathbb{B}} N[\rho]\,dx+|\mathbb{B} \setminus \Omega|\inf_{\mathbb{B} \setminus \Omega}N[\rho]}{\int_{\mathbb{B}} \rho^2\,dx}\\
&\leq \frac{\int_{\Omega \cap \mathbb{B}} N[\rho]\,dx+\int_{\mathbb{B}\setminus \Omega}N[\rho]\,dx}{\int_{\mathbb{B}} \rho^2\,dx}\\
&=\frac{\int_{\mathbb{B}} N[\rho]\,dx}{\int_{\mathbb{B}} \rho^2\,dx}\\
&=\Lambda_2(\mathbb{B}; \tau, \alpha),
\end{align*}
where the fourth inequality relies on partial monotonicity. Moreover, since $N[\rho]$ is partially monotonic and $\Omega$ is a $C^{\infty}$ domain, equality holds if and only if $\Omega=\mathbb{B}$. The isoperimetric inequality holds at $\alpha=-\tau,0$ by the continuity of the eigenvalues in $\alpha$.

When $\Omega$ is a general domain, choose $t$ so that $|t^{-1}\Omega|=|\mathbb{B}|$. Then since $-\tau/t<\alpha<0$, we deduce
\[
t^{-4}\Lambda_2(t^{-1}\Omega; t^2\tau, t^3\alpha)\leq t^{-4}\Lambda_2(\mathbb{B}; t^2\tau, t^3\alpha),
\]
which implies $\Lambda_2(\Omega; \tau, \alpha)\leq \Lambda_2(t\mathbb{B}; \tau, \alpha)$ by Lemma \ref{lem:scaling}. The sharpness of the inequality is inherited from the normalized version, and the inequality holds at $\alpha=-\tau/t,0$ again by continuity.
\end{proof}

\begin{proof}[Proof of Corollary \ref{Cor:MainCor}]
Taking $\alpha=0$ in Theorem \ref{thm:mainthm}, one immediately obtains
\[
\Lambda_2(\Omega; \tau, 0)=\omega_2(\Omega)\leq \omega_2(\Omega^*)=\Lambda_2(\Omega; \tau, 0).
\]
To prove the second statement, we assume $\Omega$ is normalized so that $|\Omega|=|\mathbb{B}|$. Observe that $\sigma$ is an eigenvalue of problem \eqref{eqn:BiharmonicSteklov} precisely when $0$ is an eigenvalue of the Robin problem \eqref{eqn:robinplate} with $\alpha=-\sigma$. By taking $\alpha=-\tau$ in Theorem \ref{thm:mainthm}, we obtain
\[
\Lambda_2(\Omega; \tau, -\tau)\leq \Lambda_2(\mathbb{B}; \tau, -\tau)=0,
\]
where the last equality holds by Remark \ref{rmk:ezero}. On the other hand, $\Lambda_2(\Omega; \tau, 0)=\omega_2(\Omega)>0$. Thus we are led to define
\[
\alpha'=\sup \{\alpha:\Lambda_2(\Omega; \tau, \alpha)=0\}.
\]
Since $\Lambda_2(\Omega; \tau, \alpha)$ is continuous with respect $\alpha$, we have $-\tau \leq \alpha'<0$. Then $\sigma=-\alpha'$ is an eigenvalue of problem \eqref{eqn:BiharmonicSteklov}, and by construction $\sigma_2(\Omega)=-\alpha'$. Thus
\begin{equation}\label{Ineq:IsoStek}
\sigma_2(\Omega)=-\alpha'\leq \tau=\sigma_2(\mathbb{B}),
\end{equation}
where the last equality holds by Remark \ref{rmk:ezero}.
\end{proof}

\section*{Acknowledgements}
Chasman thanks the Bucknell University Mathematics Department for providing housing during a visit during the spring of 2018, when conversations about this project first began. Both authors thank Richard Laugesen for encouraging us to pursue this problem and for useful discussions.


\begin{thebibliography}{9}

\bibitem{AShandbook}
M. Abramowitz and I. A. Stegun (eds.). \textrm{Handbook of Mathematical Functions}, Appl. Math. Ser. No. 55, National Bureau of Standards, Washington, D.C., 1964. (Reprinted by Dover, New York, 1965).

\bibitem{AB95}
M. S. Ashbaugh and R. Benguria. \emph{On Rayleigh's conjecture for the clamped plate and its generalization to three dimensions}, Duke Math. J., 78 (1995), 1--17.

\bibitem{AL96}
M. S. Ashbaugh and R. S. Laugesen. \emph{Fundamental tones and buckling loads of clamped plates.} Ann. Scuola Norm. Sup. Pisa Cl. Sci. (4) 23 (1996), no. 2, 383--402.

\bibitem{AFK}
P. R. S. Antunes, P. Freitas and D. Krej\v{c}i\v{r}\'{\i}k. \emph{Bounds and extremal domains for Robin eigenvalues with negative boundary parameter}. Adv. Calc. Var. 10 (2017), 357--379.

\bibitem{Bareket}
M. Bareket. \emph{On an isoperimetric inequality for the first eigenvalue of a boundary value problem}.
SIAM J. Math. Anal. 8 (1977), 280--287.

\bibitem{NIST}
R. F. Boisvert, C. W. Clark., D. W. Lozier, and F. W. J. Olver (eds.). \textrm{NIST Handbook of Mathematical Functions.} U.S. Department of Commerce, National Institute of Standards and Technology, Washington, DC; Cambridge University Press, Cambridge, 2010.

\bibitem{Brock}
F. Brock. \emph{An isoperimetric inequality for eigenvalues of the Stekloff problem}. ZAMM Z. Angew. Math. Mech. 81 (2001), no. 1, pp. 6-71.

\bibitem{BFNT}
D. Bucur, V. Ferone, C. Nitsch, and C. Trombetti. \emph{A sharp estimate for the first Robin-Laplacian
eigenvalue with negative boundary parameter}, preprint. Retrievable from https://arxiv.org/abs/1810.06108.

\bibitem{BFG}
D. Bucur, A. Ferrero, and F. Gazzola. \emph{On the first eigenvalue of a fourth order Steklov problem}. Calc. Var. Partial Differ. Equ. 35 (2009), no. 1, 103--131.

\bibitem{BG}
D. Bucur and F. Gazzola. \emph{The first biharmonic Steklov eigenvalue: positivity preserving and shape optimization}. Milan J. Math. 79 (2011), no. 1, 247--2581.

\bibitem{BucurGiacomini1}
D. Bucur and A. Giacomini. \emph{A variational approach to the isoperimetric inequality for the Robin
eigenvalue problem}. Arch. Ration. Mech. Anal. 198 (2010), 927--961.

\bibitem{BucurGiacomini2}
D. Bucur and A. Giacomini. \emph{Faber-Krahn inequalities for the Robin-Laplacian: a free discontinuity approach}. Arch. Ration. Mech. Anal. 218 (2015), 757--824.

\bibitem{BK}
D. Buoso and J. Kennedy. \emph{The Bilaplacian with Robin Boundary Conditions}, preprint.

\bibitem{Bossel}
M.-H. Bossel. \emph{Membranes \'elastiquement li\'ees: Extension du th\'eor\'eme de Rayleigh-Faber-Krahn
et de l'in\'egalit\'e de Cheeger}. C. R. Acad. Sci. Paris S\'er. I Math. 302 (1986), 47--50.

\bibitem{BP}
D. Buoso and L. Provenzano. \emph{A few shape optimization results for a biharmonic Steklov problem}. J. Differential Equations 259 (2015), no. 5, 1778--1818. 

\bibitem{BCP}
D. Buoso, L. M. Chasman, and L. Provenzano. \emph{On the stability of some isoperimetric inequalities for the fundamental tones of free plates}. J. Spectr. Theory 8 (2018), no. 3, 843--869.

\bibitem{chasmanball}
L. M. Chasman. \emph{Vibrational modes of circular free plates under tension}. Appl. Anal. 90 (2011), no. 12, 1877--1895.

\bibitem{chasmanineq}
L. M. Chasman. \emph{An isoperimetric inequality for fundamental tones of free plates}. Comm. Math. Phys. 303 (2011), no. 2, 421--449.

\bibitem{chasmanthesis}
L. M. Chasman.  \emph{An isoperimetric inequality for fundamental tones of free plates}. Thesis (Ph.D.) University of Illinois at Urbana-Champaign. 2009. 78 pp. ISBN: 978-1109-57141-7, ProQuest LLC

\bibitem{ChasmanPR}
L. M. Chasman. \emph{An isoperimetric inequality for fundamental tones of free plates with nonzero Poisson's ratio}, Appl. Anal. 95 (2016), no. 8, 1700--1735.

\bibitem{CL16} 
L. M. Chasman and J. Langford. \emph{The clamped plate in Gauss space.} Ann. Mat. Pura Appl. (4) 195 (2016), no. 6, 1977--2005.


\bibitem{Daners}
D. Daners. \emph{A Faber-Krahn inequality for Robin problems in any space dimension}. Math. Ann.
335 (2006), 767--785.

\bibitem{Evans}
L. C. Evans. Partial differential equations. Second edition. Graduate Studies in Mathematics, 19. American Mathematical Society, Providence, RI, 2010. xxii+749 pp.

\bibitem{EvansGariepy}
L. C. Evans and R. F. Gariepy. Measure theory and fine properties of functions. Studies in Advanced Mathematics. CRC Press, Boca Raton, FL, 1992. viii+268 pp.

\bibitem{Faber}
G. Faber. \emph{Beweis, dass unter allen homogenen Membranen von gleicher Fl\"ache und gleicher Spannung die kreisf\"ormige den tiefsten Grundton gibt, Sitzungsber}. Bayer. Akad. Wiss. M\"unchen, Math.-Phys. Kl. (1923) pp. 169--172.

\bibitem{FNT}
V. Ferone, C. Nitsch, and C. Trombetti. \emph{On a conjectured reversed Faber-Krahn inequality for
a Steklov-type Laplacian eigenvalue}. Commun. Pure Appl. Anal. 14 (2015), 63--81.

\bibitem{FK}
P. Freitas and D. Krej\v{c}i\v{r}\'{\i}k. The first Robin eigenvalue with negative boundary parameter, Adv.
Math. 280 (2015), 322--339.

\bibitem{FreitasLaugesen1}
P. Freitas and R. S. Laugesen.  \emph{From Neumann to Steklov and beyond, via Robin: the Weinberger way}. American Journal of Mathematics, to appear. Retrievable from https://arxiv.org/abs/1810.07461.

\bibitem{FrietasLaugesen2}
P. Freitas and R. S. Laugesen. \emph{From Steklov to Neumann and beyond, via Robin: the Szeg\H{o} way}. Canadian Journal of Mathematics, 72(4), 1024--1043.

\bibitem{GirourdLaugesen}
A. Girouard and R. S. Laugesen. \emph{Robin spectrum: two disks maximize the third eigenvalue}, Indiana University Mathematics Journal, to appear.

\bibitem{Henrot}
A. Henrot, ed. Shape Optimization and Spectral Theory. De Gruyter Open, Warsaw, 2017.

\bibitem{Krahn}
E. Krahn. \emph{\"Uber eine von Rayleigh formulierte Minimaleigenschaft des Kreises}. Math. Ann. , 94 (1925) pp. 97--100.

\bibitem{LorchSzego}
L. Lorch and P. Szego. \emph{Bounds and monotonicities for the zeros of derivatives of ultraspherical Bessel functions}. SIAM J. Math. Anal. 25 (1994), no. 2, 549--554.

\bibitem{N95}
N. S. Nadirashvili. \emph{Rayleigh's conjecture on the principal frequency of the clamped plate}. Arch. Rational Mech. Anal., \textbf{129} (1995), 1--10.

\bibitem{Nirenberg}
L. Nirenberg. \emph{Remarks on strongly elliptic partial differential equations.} Communications in Pure and Applied Mathematics 8 (1955), 649--675.

\bibitem{RToS}J.W.S Rayleigh. \textrm{The theory of sound}, Dover Pub, New York, 1945. Re-publication of the 1894/96 edition.

\bibitem{Showalter}
R. E. Showalter. Hilbert space methods for partial differential equations. Monographs and Studies in Mathematics, Vol. 1. Pitman, London-San Francisco, Calif.-Melbourne, 1977.

\bibitem{Szego}
G. Szeg\H{o}. \emph{Inequalities for certain eigenvalues of a membrane of given area}. J. Rational Mech. Anal. 3
(1954) 343--356.

\bibitem{T81} G. Talenti. \emph{On the first eigenvalue of the clamped plate}. Ann. Mat. Pura Appl. (Ser. 4), \textbf{129} (1981), 265--280.

\bibitem{Taylor}
M. E. Taylor. Partial differential equations I. Basic theory. Second edition. Applied Mathematical Sciences, 115. Springer, New York, 2011. xxii+654 pp.

\bibitem{Weinberger}
H. F. Weinberger. \emph{An isoperimetric inequality for the N-dimensional free membrane problem},
J. Rational Mech. Anal. 5 (1956), 633--636.

\bibitem{Weinstock}
R. Weinstock. \emph{Inequalities for a Classical Eigenvalue Problem}. Journal of
Rational Mechanics and Analysis, Vol. 3 (1954), pp. 745--753.

\end{thebibliography}
\end{document}